\numberwithin{equation}{section}
\numberwithin{figure}{section}
  \theoremstyle{plain}
  \newtheorem*{thm*}{\protect\theoremname}
\theoremstyle{plain}
\newtheorem{thm}{\protect\theoremname}
  \theoremstyle{plain}
  \newtheorem{conjecture}[thm]{\protect\conjecturename}
  \theoremstyle{definition}
  \newtheorem{defn}[thm]{\protect\definitionname}
  \theoremstyle{plain}
  \newtheorem{lem}[thm]{\protect\lemmaname}
  \theoremstyle{remark}
  \newtheorem{rem}[thm]{\protect\remarkname}
  \theoremstyle{plain}
  \newtheorem{prop}[thm]{\protect\propositionname}
  \theoremstyle{plain}
  \newtheorem{cor}[thm]{\protect\corollaryname}
\newcommand{\ra}{\rightarrow}
\newcommand{\lra}{\longrightarrow}
\newcommand{\sra}[1]{\stackrel{#1}{\ra}}
\newcommand{\ov}{\overline}
\newcommand{\cA}{{\mathcal A}}
\newcommand{\cB}{{\mathcal B}}
\newcommand{\cC}{{\mathcal C}}
\newcommand{\cE}{{\mathcal E}}
\newcommand{\cN}{{\mathcal N}}
\newcommand{\cO}{{\mathcal O}}
\newcommand{\cP}{{\mathcal P}}
\newcommand{\cX}{{\mathcal X}}
\newcommand{\cY}{{\mathcal Y}}
\newcommand{\bN}{{\mathbb N}}
\newcommand{\bR}{{\mathbb R}}
\newcommand{\bC}{{\mathbb C}}
\newcommand{\bZ}{{\mathbb Z}}
\newcommand{\bQ}{{\mathbb Q}}
\newcommand{\bA}{{\mathbb A}}
\newcommand{\fg}{{\mathfrak g}}
  \providecommand{\conjecturename}{Conjecture}
  \providecommand{\corollaryname}{Corollary}
  \providecommand{\definitionname}{Definition}
  \providecommand{\lemmaname}{Lemma}
  \providecommand{\propositionname}{Proposition}
  \providecommand{\remarkname}{Remark}
  \providecommand{\theoremname}{Theorem}
\providecommand{\theoremname}{Theorem}
\begin{document}

\title{Arithmetic groups with isomorphic finite quotients}

\author{Menny Aka}
\begin{abstract}
Two finitely generated groups have the same set of finite quotients
if and only if their profinite completions are isomorphic. Consider
the map which sends (the isomorphism class of) an S-arithmetic group
to (the isomorphism class of) its profinite completion. We show that
for a wide class of S-arithmetic groups, this map is finite to one,
while the the fibers are of unbounded size. 
\end{abstract}
\maketitle

\section{Introduction}

A classical topic in number theory is the study of integral quadratic
forms of the same genus, i.e., those who become isomorphic integrally
in every local completion. In the 70's Grunewald, Pickel and Segal
(cf. \cite{GPS80} and the references therein) studied non-commutative
analogues, establishing:
\begin{thm*}
\cite{GPS80}\label{thm:GPS}Let $\cP$ be the family of polycyclic-by-finite
groups and for $\Gamma\in\cP$ let $\cP_{\Gamma}:=\{\Lambda\in\cP:\widehat{\Lambda}\cong\widehat{\Gamma}\}$
where $\widehat{\Gamma}$ denotes the profinite completion of $\Gamma$.
Then, for any $\Gamma\in\cP$, $\cP_{\Gamma}$ is a finite union of
isomorphism classes.
\end{thm*}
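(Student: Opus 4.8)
The plan is to reformulate the statement as a finiteness theorem of Borel--Harish-Chandra / Borel--Serre type for arithmetic groups, manufacturing the relevant algebraic groups from the structure theory of polycyclic-by-finite groups (the Fitting subgroup and the Mal'cev completion of its nilpotent part). The overall induction is on the Hirsch length $h(\Gamma)$; the case $h(\Gamma)=0$ is trivial, since then $\Gamma$ is finite and coincides with its own profinite completion, so $\cP_\Gamma=\{\Gamma\}$.

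\emph{Step 1 (reduction to torsion-free $\Gamma$).} For a finitely generated group $G$, the map $H\mapsto\overline H$ is an index-preserving bijection from finite-index subgroups of $G$ onto open subgroups of $\widehat G$, it commutes with isomorphisms of profinite completions, and the closure of a finite-index subgroup $H$ is canonically its own profinite completion $\widehat H$. Applying this to the characteristic subgroup $\Gamma_m:=\bigcap\{H\le\Gamma:[\Gamma:H]\le m\}$, which for $m$ large enough is torsion-free (it is contained in the normal core of a torsion-free finite-index subgroup) hence poly-$\mathbb Z$, we see that every $\Lambda\in\cP_\Gamma$ has $\widehat{\Lambda_m}\cong\widehat{\Gamma_m}$ and $\Lambda/\Lambda_m\cong\Gamma/\Gamma_m=:F$ with $F$ a fixed finite group. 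Since $H^2(F,A)$ is finite for every finitely generated abelian $A$, and since $\mathrm{Out}(\Lambda_m)$ is commensurable with an arithmetic group (Auslander) and therefore has finitely many conjugacy classes of finite subgroups, there are only finitely many isomorphism types of extensions of $F$ by a given group $\Lambda_m$; so it suffices to bound $|\cP_{\Gamma_m}|$, i.e. we may assume $\Gamma$ is poly-$\mathbb Z$ and $h(\Gamma)>0$.

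\emph{Step 2 (fixing the Fitting subgroup and its quotient).} Let $N=\mathrm{Fitt}(\Gamma)$, a characteristic nilpotent subgroup; it is infinite (the last nontrivial term of the derived series of $\Gamma$ is free abelian, nontrivial and normal, hence lies in $N$), so $Q:=\Gamma/N$ has $h(Q)<h(\Gamma)$. As $\Gamma$ induces the full profinite topology on $N$, the inclusion gives $\widehat N\hookrightarrow\widehat\Gamma$ with image the pronilpotent radical, so for $\Lambda\in\cP_\Gamma$ one has $\widehat{\mathrm{Fitt}(\Lambda)}\cong\widehat N$ and $\widehat{\Lambda/\mathrm{Fitt}(\Lambda)}\cong\widehat Q$. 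By Pickel's theorem there are finitely many isomorphism types of finitely generated torsion-free nilpotent groups with profinite completion $\widehat N$ — the proof passes to the rational Mal'cev Lie algebra $\fg$, which is pinned down up to $\mathbb Q$-isomorphism by $\widehat N$ because it is determined up to isomorphism over every $\mathbb Q_p$ and over $\mathbb R$ and there are finitely many $\mathbb Q$-forms in such a genus (a first instance of the arithmetic finiteness below), after which $\mathrm{Fitt}(\Lambda)$ and $N$ are two lattices in one nilpotent $\mathbb Q$-group and, by Mal'cev rigidity, isomorphism classes of lattices in a fixed genus are counted by a finite class number. By the inductive hypothesis applied to $Q$ there are finitely many isomorphism types for $\Lambda/\mathrm{Fitt}(\Lambda)$. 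Hence we may fix representatives $N$ and $Q$ and are reduced to bounding the number of isomorphism types of polycyclic groups $\Lambda$ fitting in $1\to N\to\Lambda\to Q\to 1$ with $\widehat\Lambda\cong\widehat\Gamma$.

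\emph{Step 3 (the arithmetic orbit count --- the crux).} Such a $\Lambda$ is classified, up to the action of $\mathrm{Aut}(N)\times\mathrm{Aut}(Q)$, by an action homomorphism $\varphi\colon Q\to\mathrm{Out}(N)$ together with a class in $H^2_\varphi(Q,Z(N))$, the latter group being finitely generated because $Q$ is of type $\mathrm{FP}_\infty$ and $Z(N)$ is finitely generated. Since $N$ is finitely generated nilpotent, $\mathrm{Aut}(N)$ --- and hence $\mathrm{Out}(N)$ --- is commensurable with $\mathbf H(\mathbb Z)$ for the $\mathbb Q$-algebraic group $\mathbf H=\mathrm{Aut}(\fg)$, and $\mathrm{Aut}(Q)$ is likewise virtually arithmetic. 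An isomorphism $\widehat\Lambda\cong\widehat\Gamma$ translates precisely into: $\varphi$ lies in the \emph{genus} of $\varphi_\Gamma$ (the two become conjugate after completing, i.e. simultaneously over all $\mathbb Z_p$) and, once the actions are aligned, the two $H^2$-classes lie in one genus inside the finitely generated abelian group $H^2_\varphi(Q,Z(N))$. Packaging $\varphi$ together with a representing $2$-cocycle as a point of the integral points of a single affine $\mathbb Q$-variety $\mathbf X$ on which the arithmetic group $\mathbf H(\mathbb Z)\times\mathrm{Aut}(Q)$ acts, the claim becomes: this arithmetic group has finitely many orbits on the set of integral points of $\mathbf X$ in a prescribed genus. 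That is exactly the Borel--Harish-Chandra / Borel--Serre finiteness theorem (equivalently: finiteness of a suitable non-abelian $H^1$, or of a double-coset space). Carrying out this packaging --- verifying that ``$\widehat\Lambda\cong\widehat\Gamma$'' corresponds on the nose to ``same genus'' and identifying $\mathbf X$ and the action --- is where essentially all the work lies; Steps 1--2 are structure theory and bookkeeping, while here the genuine arithmetic input (finiteness of class numbers, or of orbits of arithmetic groups on lattices) is what forces $\cP_\Gamma$ to be finite.
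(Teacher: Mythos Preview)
The paper does not prove this theorem. It is quoted in the introduction as a known result of Grunewald--Pickel--Segal, cited purely as motivation for the paper's own finiteness theorems about $S$-arithmetic groups; no proof or proof sketch appears anywhere in the text. There is therefore nothing in the paper to compare your proposal against.

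Judged on its own, your outline follows the broad architecture of the original GPS argument: reduction to the torsion-free case, control of the Fitting subgroup via Pickel's theorem, induction on Hirsch length for the quotient, and an arithmetic finiteness statement for the extension data. Two places warrant caution. In Step~2, the claim that the closure of $N=\mathrm{Fitt}(\Gamma)$ in $\widehat\Gamma$ is intrinsically recognizable as ``the pronilpotent radical'' (so that an isomorphism $\widehat\Lambda\cong\widehat\Gamma$ forces $\widehat{\mathrm{Fitt}(\Lambda)}\cong\widehat N$) is not obvious and is not how the GPS argument proceeds; pinning down the nilpotent piece from profinite data is one of the genuinely delicate points. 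In Step~3, the sentence about ``packaging $\varphi$ together with a representing $2$-cocycle as a point of the integral points of a single affine $\mathbb Q$-variety'' and then invoking Borel--Serre is where essentially the entire technical content of the GPS paper lives; what you have written is a plausible roadmap rather than a proof, and you yourself flag this. So: reasonable strategy, but not a self-contained argument, and in any case not something the present paper attempts.
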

Note that for finitely generated groups, having isomorphic profinite
completion is equivalent to having the same set of finite quotients
(see subsection \ref{sub:Profinite-groups}). Thus, in other words,
the Theorem says that in the family $\cP$ an element is determined
up to finitely many options by the set of its finite quotients. 

Recently, Grunewald and Zalesskii \cite{GZ11} revisited the topic.
We will follow their terminology: Let $\cC$ be a family of groups
and $\Gamma\in\cC$. The $\cC$- genus of $\Gamma$ is 
\[
g(\cC,\Gamma):={\rm IsoClasses}(\{\Lambda\in\cC:\widehat{\Lambda}\cong\widehat{\Gamma}\})
\]
which is the set of isomorphism classes of groups from $\cC$ whose
profinite completion is isomorphic to $\widehat{\Gamma}$. In this
paper we study the genus of $S$-arithmetic subgroups of simple algebraic
groups. Let us recall (following \cite[4.1]{PR94}) their definition:

Let $\cA$ be the family of groups $\Gamma$ such that: 
\begin{enumerate}
\item There exists $n\in\bN$, a number field $k$ and a $k$-algebraic
subgroup $G\subset GL_{n}$ such that $G$ is simply-connected, almost
and absolutely simple.\label{group definition} 
\item The group $\Gamma$ is isomorphic to a subgroup of $G(\bar{k})$ and
commensurable to $G(\cO_{k,S}):=G\cap GL_{n}(\cO_{k,S})$ where $S$
is a finite set of places of $k$ containing the Archimedean ones
and $\cO_{k,s}$ is the ring of $S$-integers (defined below). 
\item The $S$-rank of $G$ is $\geq2$, i.e., $\sum_{v\in S}rank_{k_{v}}(G)\geq2$
where $rank_{k_{v}}(G)$ is the dimension of a maximal $k_{v}$-split
tori in $G(k_{v})$ .
\end{enumerate}
Recall that two groups $\Gamma$ and $\Lambda$ are called \emph{commensurable}
if they have finite index subgroups $\Gamma_{1}<\Gamma,\Lambda_{1}<\Lambda$
with $\Gamma_{1}\cong\Lambda_{1}$ (see also Definitions \ref{def: commen}
and \ref{def: family A concrete} below).

The group structure of elements of $\cA$ encodes, in some sense,
the arithmetic information that is used to define them. Many rigidity
results about members of $\cA$ have been proved, in particular Margulis'
super-rigidity \cite{Mar91} implies that whenever two elements of
$\cA$ are isomorphic, there is a unique isomorphism between their
ambient groups over a unique isomorphism of their fields of definition.
In contrast to Margulis super rigidity, we find in Section \ref{sec:Examples}
and in \cite{Aka10} pairs of groups $\Gamma,\Lambda\in\cA$ that
have isomorphic profinite completion but are not isomorphic. Moreover,
the associated algebraic and arithmetic information that defines $\Gamma$
and $\Lambda$ can be quite different. Nevertheless, for $\Gamma\in\cA$
let 
\[
\cA_{\Gamma}:=\{\Lambda\in\cA:\widehat{\Lambda}\cong\widehat{\Gamma}\},
\]
and we conjecture the following:
\begin{conjecture}
\label{con:finiteness}For all $\Gamma\in\cA$, the set $\cA_{\Gamma}$
is a disjoint union of finitely many isomorphism classes.%
{} 
\end{conjecture}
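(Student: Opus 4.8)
\emph{A strategy toward Conjecture~\ref{con:finiteness}, at least for those $\Gamma\in\cA$ enjoying the congruence subgroup property, runs as follows.} The idea is to pass from the abstract profinite group $\widehat\Gamma$ to the congruence completion, reconstruct from it the defining data $(G,k,S)$ up to finitely many possibilities, and then bound the number of $S$-arithmetic groups carrying that data and that completion. Concretely, let $\Gamma$ be commensurable with $G(\cO_{k,S})$ and suppose the $S$-congruence kernel $C(G,S)=\ker\bigl(\widehat{G(\cO_{k,S})}\to\prod_{v\notin S}G(\cO_v)\bigr)$ is finite; this holds for a wide class of $\Gamma\in\cA$ by the known cases of Serre's congruence subgroup conjecture (simply connected, $S$-rank $\geq 2$; see \cite{PR94}). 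Then $\widehat\Gamma$ is a finite extension of an open subgroup of $\prod_{v\notin S}G(\cO_v)$. Passing to finite-index subgroups — which changes each genus by a controlled finite factor and keeps us inside $\cA$ — an isomorphism $\widehat\Lambda\cong\widehat\Gamma$ descends to an isomorphism $\overline\Lambda\cong\overline\Gamma$ of congruence completions, where $\overline\Gamma=\prod_{v\notin S}\Gamma_v$ is an open subgroup of $\prod_{v\notin S}G(\cO_v)$, and likewise $\overline\Lambda$ for the a priori unrelated data $(H,\ell,S')$ attached to $\Lambda$ by Margulis superrigidity \cite{Mar91}. It then suffices to show (i) that $\overline\Gamma$ determines $(G,k,S)$ up to finitely many options, and (ii) that for fixed data the number of $\Lambda$ with $\overline\Lambda\cong\overline\Gamma$ is finite.

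For (i), the profinite group $\overline\Gamma$ decomposes as a product of ``local blocks'' $\Gamma_v$, each virtually pro-$p$ for the single prime $p$ equal to the residue characteristic of $v$, and this decomposition is essentially canonical, so any isomorphism $\overline\Lambda\cong\overline\Gamma$ matches blocks along a bijection of the place sets preserving residue characteristic. Each block $\Gamma_v$ is a compact $p$-adic analytic group, and its $\QQ_p$-analytic dimension $\dim G\cdot[k_v:\QQ_p]$, its residue characteristic and residue field size, and its finite congruence quotients (the reductive group $G$ over the residue field, together with its twisting) determine $k_v$ and the $k_v$-isomorphism type of $G$ up to finitely many possibilities — in particular the degree $[k:\QQ]$, the ramification, and the splitting type of every rational prime become visible. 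Since a number field together with an absolutely almost simple simply connected group over it is pinned down, up to finitely many choices, by almost all of its local completions — here Hermite--Minkowski bounds the discriminant — the data $(H,\ell)$ agrees with $(G,k)$ up to finitely many options, and $S'=S$ is recovered as the complement of the set of places occurring in $\overline\Gamma$. Fix one admissible triple $(G,k,S)$.

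For (ii), all $\Lambda\in\cA$ with data $(G,k,S)$ are mutually commensurable, and by strong approximation (valid since $G$ is simply connected and $S$-isotropic; see \cite{PR94}) each such $\Lambda$ equals $G(k)\cap\overline\Lambda$ for its congruence closure $\overline\Lambda=\prod_{v\notin S}\Lambda_v\leq\prod_{v\notin S}G(\cO_v)$, with $\Lambda_v=G(\cO_v)$ for all but finitely many $v$; hence $\Lambda\mapsto\overline\Lambda$ is injective, and it is enough to bound the number of such open subgroups abstractly isomorphic to $\overline\Gamma$. Since $\prod_{v\notin S}G(\cO_v)$ has only finitely many open subgroups of any given index (each $G(\cO_v)$ is topologically finitely generated, and the minimal index of a proper open subgroup of $G(\cO_v)$ tends to infinity with $v$), the whole point is to bound $[\prod_{v\notin S}G(\cO_v):\overline\Lambda]$ in terms of $\Gamma$ alone. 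Using the block matching above together with local rigidity of the groups $G(\cO_v)$ — an open subgroup of $G(\cO_v)$ abstractly isomorphic to $G(\cO_w)$ can be forced to have bounded, ideally trivial, index — one obtains such a bound; alternatively, when the Euler characteristic of $\Gamma$ detects covolume, one uses $\chi(\Lambda)=\chi(\Gamma)$ (profinite invariance for the good, type $FP$ group $\Gamma$, via Borel--Serre and Serre's goodness results) to bound $\operatorname{vol}\bigl(\prod_{v\in S}G(k_v)/\Lambda\bigr)$ and then invokes the Wang-type finiteness of lattices of bounded covolume.

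I expect two genuine obstacles. The primary one is the first step: the whole argument is conditional on finiteness of the congruence kernel, and Serre's conjecture remains open for some higher-rank forms (various anisotropic groups, certain forms of exceptional type), which is exactly why only ``a wide class'' — rather than all of $\cA$ — is within reach. The subtler technical difficulty lies in the rigidity input for (ii): a priori an abstract isomorphism of congruence completions need neither respect the product structure nor match the individual local groups, so making the block-matching precise and quantifying the residual index is delicate — and this is precisely the place where different triples $(G,k,S)$ can collide into commensurable groups $\prod_{v}G(\cO_v)$, which is the very mechanism producing the fibers of unbounded size advertised in the abstract.
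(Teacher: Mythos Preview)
This statement is a conjecture; the paper proves only the CSP case (Theorem~\ref{thm:CSP}), which is also what your proposal targets, so that is the right comparison.

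Your two-step architecture matches the paper's, but the execution differs. For (i), the paper does \emph{not} assume CSP: it uses Margulis super-rigidity to show that the $\bQ_p$-Lie algebra $L_p^\Gamma$ of the maximal $p$-adic analytic quotient of $\widehat\Gamma$ is a profinite invariant (Proposition~\ref{prop:lie iso SR}), reads off from it the normal closure of $k$, $\dim G$, the full primes, and the local forms of $\mathrm{Res}_{k/\bQ}G$ (Corollary~\ref{cor:dimension and normal closures}), and concludes finiteness up to commensurability unconditionally (Theorem~\ref{Main Theorem 1}). Your route through the congruence completion and Hermite--Minkowski is reasonable under CSP but forfeits this unconditional result. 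For (ii), the paper does not try to bound $[\prod_v G(\cO_v):\overline\Lambda]$; your local-rigidity assertion that an open subgroup of $G(\cO_v)$ abstractly isomorphic to $G(\cO_w)$ has bounded index is not substantiated and is precisely the hard point. Instead the paper passes, via Jordan's theorem (Proposition~\ref{prop:reduction to factor}), to finite-index subgroups on which the isomorphism becomes of \emph{factor type}, then applies Pink's rigidity for compact open subgroups of $G(k_v)$ together with Prasad's volume formula (Lemma~\ref{lem:Covolume}) to show that $\Gamma$ and $\Lambda$ have \emph{equal} covolume in $G_S$, and finishes with Borel's $S$-arithmetic extension of Wang's finiteness theorem. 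Your Euler-characteristic alternative fails whenever $\chi(\Gamma)=0$, which is common.

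There is also a genuine gap you pass over: before one can descend $\widehat\Lambda\cong\widehat\Gamma$ to congruence completions, one must know that $\Lambda$ itself has CSP. The paper invokes Lubotzky's theorem \cite{Lub95} that CSP is detectable from $\widehat\Gamma$ for members of $\cA$; this step is not automatic and your outline should make it explicit.
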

In this paper we prove:
\begin{thm}
\label{thm:CSP}Assume that $\Gamma\in\cA$ has the congruence subgroup
property (see \ref{sub:CSP}) then the set $\cA_{\Gamma}$ is a disjoint
union of finitely many isomorphism classes.%
{} 
\end{thm}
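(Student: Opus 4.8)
The plan is to use the congruence subgroup property to realize $\widehat\Gamma$ as an essentially adelic profinite group, to read off from its isomorphism type the number field, the algebraic group, and the set of places underlying any $\Lambda\in\cA_\Gamma$ up to finitely many possibilities, and finally to pin $\Lambda$ down inside its commensurability class. First, every $\Lambda\in\cA_\Gamma$ inherits the congruence subgroup property: by Lubotzky's characterization of CSP via subgroup growth, an arithmetic group of higher rank in characteristic zero has CSP if and only if it has polynomial subgroup growth, and subgroup growth is a profinite invariant, so CSP passes from $\Gamma$ to $\Lambda$. One may therefore work with congruence completions. Because $G$ is simply connected and $S$-rank $\ge 2$ forces $\mathrm{rank}_{k_v}(G)\ge1$ for some $v\in S$, strong approximation applies and yields an exact sequence $1\to C(\Gamma)\to\widehat\Gamma\to\overline\Gamma\to1$ with $C(\Gamma)$ finite and $\overline\Gamma=\prod_{v\notin S}K_v$ an open subgroup of $G(\bA_{k,f}^{S})$, each $K_v\subset G(k_v)$ compact open and equal to $G(\cO_v)$ for all but finitely many $v$; likewise for $\Lambda$, with defining data $(\ell,H,T)$. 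For almost all $v$ the factor $K_v$ is a compact $p$-adic analytic group that is just infinite modulo its boundedly small centre.

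\textbf{Recovering the arithmetic.} Fix $\phi\colon\widehat\Gamma\xrightarrow{\sim}\widehat\Lambda$; quotienting by the finite congruence kernels, $\overline\Gamma$ and $\overline\Lambda$ each admit a continuous surjection with finite kernel onto a common profinite group $Q$. The heart of the matter is a recognition statement: a restricted product of (virtually) just infinite compact $p$-adic analytic groups determines the multiset of its factors up to finitely many ambiguities --- by a Goursat argument, using that a continuous surjection between such groups is an isomorphism modulo a finite kernel unless the target is finite --- and this is not disturbed by passing to a quotient by a finite normal subgroup. Hence the multisets $\{K_v\}_{v\notin S}$ and $\{K'_w\}_{w\notin T}$ agree outside finitely many factors, and an isomorphism $K_v\cong K'_w$ forces $v$ and $w$ to lie over the same rational prime with the same local degree and the same ramification. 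Reading this off at almost all rational primes shows that $k$ and $\ell$ have the same splitting and ramification data almost everywhere; thus $\ell$ is arithmetically equivalent to $k$ and, since only finitely many number fields have a given degree and bounded ramification, $\ell$ ranges over a finite set; similarly $T$ and $S$ agree outside a finite set, and $H$ is locally isomorphic to $G$ at almost all places. The Borel--Serre finiteness of the genus of an algebraic group then leaves finitely many possibilities for $H$, so $\cA_\Gamma$ meets only finitely many commensurability classes of $S$-arithmetic groups.

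\textbf{From commensurability class to isomorphism class, and the main obstacle.} It remains to bound, within a fixed commensurability class $\cC$, the number of isomorphism classes of $\Lambda\in\cC$ with $\widehat\Lambda\cong\widehat\Gamma$. With the ambient group $G(\bA_{k,f}^{S})$ now fixed and CSP available for both groups, the isomorphism $\widehat\Gamma\cong\widehat\Lambda$ makes $\overline\Gamma$ and $\overline\Lambda$ isomorphic up to finite kernels; since the depth of a congruence subgroup at each place is itself recoverable from the isomorphism type of the corresponding $p$-adic analytic factor (the associated $\bZ_p$-Lie algebras detect it, $G$ being semisimple), this pins down the covolume of $\Lambda$ up to a bounded factor. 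Reduction theory then leaves only finitely many members of $\cC$ of bounded covolume up to conjugacy --- they lie with controlled index in the finitely many maximal arithmetic subgroups of $\cC$ --- and Margulis superrigidity, quoted in the introduction, upgrades conjugacy classes to isomorphism classes. I expect the real difficulty to be the recognition statement of the middle step --- extracting honest local data at almost every place from the bare isomorphism $\widehat\Gamma\cong\widehat\Lambda$ --- together with ensuring that the recovered invariants are genuinely finite rather than merely bounded up to arithmetic equivalence, for which the finiteness of arithmetically equivalent number fields and of the genus of an algebraic group are indispensable.
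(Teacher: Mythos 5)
Your overall architecture matches the paper's: CSP is transferred to every $\Lambda\in\cA_\Gamma$ via Lubotzky's profinite characterization, one reduces to finitely many commensurability classes, and within a class one argues that $\widehat\Lambda\cong\widehat\Gamma$ forces equal (or at least bounded) covolume in $G_S$ and invokes Borel's finiteness theorem for lattices of bounded covolume. (Your use of CSP already in the commensurability step is a legitimate deviation --- the paper deliberately avoids CSP there, using superrigidity to identify the maximal $p$-adic analytic quotients of $\widehat\Gamma$ and their Lie algebras $L_p^\Gamma$ --- and your closing remark about superrigidity ``upgrading conjugacy to isomorphism'' is unnecessary, since conjugate subgroups are isomorphic.) However, the two places where you wave your hands are exactly where the real work lies, and one of your stated mechanisms is wrong.

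First, the ``recognition statement.'' An isomorphism $\Phi:\widehat\Gamma\to\widehat\Lambda$ does not obviously respect the local factors: $\overline\Lambda$ need not be a product at all (only an open compact subgroup of $H(\bA_{\ell,T})$, and $\Lambda$ need not even be rational), and even for honest products the cross-terms $\pi_w\circ\Phi\circ i_v$ with $w\nmid p$ have finite but a priori nontrivial image, so no naive Goursat argument splits $\Phi$ factorwise. The paper's Proposition 5.5 handles precisely this: one must pass to the finite-index subgroups $\Gamma_v^0=[\langle\Gamma_v^j\rangle,\langle\Gamma_v^j\rangle]$ (Jordan's theorem kills the finite cross-images, Margulis' normal subgroup theorem guarantees finite index), and then use simplicity of the $\bQ_p$-Lie summands to extract the permutation of places; only after this does one get factor-type isomorphisms, on corresponding finite-index subgroups, at \emph{all} places. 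Second, your covolume step is not correct as stated: the $\bZ_p$- or $\bQ_p$-Lie algebra of a compact open subgroup of $G(k_v)$ is the same for all such subgroups, so it cannot ``detect the depth''; the fact that the local Haar volume is an invariant of the abstract isomorphism type of the factor requires Pink's rigidity theorem (an abstract isomorphism of compact open subgroups extends to an algebraic isomorphism over an isomorphism of the local fields, hence is measure-preserving for Prasad's normalization). Moreover, a bound ``up to a bounded factor'' with $\Lambda$-dependent exceptional places is useless for Borel's theorem, which needs a covolume bound depending only on $\Gamma$: without factor matching at every place together with the index bookkeeping $[\Gamma:\Gamma^0]=[\Lambda:\Phi_*(\Gamma^0)]$ (Lemma 2.6/2.8 of the paper), the product of local volumes at the bad places is not a priori controlled. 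The paper's route --- factor-type isomorphism on corresponding finite-index subgroups, Pink, Prasad's volume formula giving \emph{exact} equality of covolumes, then Borel --- is what fills these gaps; your sketch presupposes them rather than proving them.
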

Note that Serre's conjecture \cite[9.5]{PR94} states that all elements
of $\cA$ posses the congruence subgroup property and indeed this
conjecture has been proved for most $\Gamma$'s in $\cA$ (see \cite{PR08}
for the current state of this conjecture). Without assuming the congruence
subgroup property we show:
\begin{thm}
\label{Main Theorem 1}For all $\Gamma\in\cA$, the set $\cA_{\Gamma}$
is contained in the union of finitely many commensurability classes.
\end{thm}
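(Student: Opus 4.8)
The plan is to bound the arithmetic data attached to the members of $\cA_\Gamma$. By Margulis super-rigidity \cite{Mar91}, as recalled above, any abstract commensurability between two members of $\cA$ is induced by an isomorphism of the ambient algebraic groups lying over an isomorphism of their fields of definition and carrying the one set of places to the other; hence the commensurability class of $\Lambda\in\cA$ is determined by, and determines, the triple $(k',G',S')$ consisting of its field, its $k'$-group and its set of places, up to isomorphism. So it suffices to show that only finitely many such triples occur among the $\Lambda\in\cA_\Gamma$.

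Two inputs make the local data of $\Lambda$ legible inside $\widehat\Lambda$. First, since $\Lambda$ has $S'$-rank $\geq 2$, its congruence kernel is central in $\widehat\Lambda$ — this is the point where the rank hypothesis is decisive — so every finite quotient of $\widehat\Lambda$ that is a direct product of nonabelian finite simple groups already factors through the congruence completion $\overline\Lambda$. Second, since $G'$ is simply connected and isotropic at some place of $S'$, strong approximation identifies $\overline\Lambda$ with the restricted direct product $\prod'_{v\notin S'}G'(\cO'_v)$; at all but finitely many places $v$ the group $G'(\cO'_v)$ is hyperspecial, with reductive quotient $\mathbf G'(\kappa_v)$ for $\kappa_v$ the residue field of $k'$ at $v$. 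Combining the two, for each prime $p$ that is unramified in $k'$, does not lie below $S'$, and over which $G'$ is everywhere unramified — a set of primes of density one — the maximal quotient of $\widehat\Gamma\cong\widehat\Lambda$ that is a product of finite simple groups of Lie type of a prescribed absolute type in characteristic $p$ is computable from $\widehat\Gamma$ alone and equals the product, over the places $v$ of $k'$ above $p$, of the finite simple groups of Lie type attached to $G'$ over the residue fields $\kappa_v$.

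From this I would extract the triple in three steps. (i) The finite simple quotients of $\widehat\Gamma$ occurring for infinitely many isomorphism types are exactly the finite simple groups of Lie type of one fixed absolute Dynkin type, which is therefore common to $G$ and $G'$; in particular $\dim G'$ and $\mathrm{rank}(G')$ are pinned down, up to the finitely many coincidences among finite simple groups of Lie type. (ii) For each prime $p$ in the density-one set above, reading off the residue degrees and the outer twists of the factors recovers the decomposition type of $p$ in $k'$ and the local outer forms of $G'$ there; running over all such $p$ recovers the Dedekind zeta function of $k'$ up to finitely many Euler factors, hence $k'$ up to finitely many isomorphism classes (arithmetic equivalence together with Hermite--Minkowski), and then $S'$ up to finitely many choices. (iii) Finally, with $k'$ known, comparing at the remaining finitely many primes the true decomposition type with the smaller reductive quotients actually observed detects the places where $G'$ fails to be unramified together with their local invariants; with the Hasse principle for simply connected groups this determines $G'$ up to finitely many choices. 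This yields finitely many triples $(k',G',S')$, hence the theorem.

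The main obstacle is exactly the absence of the congruence subgroup property, which is also why the conclusion concerns commensurability classes rather than isomorphism classes as in Theorem~\ref{thm:CSP}: without it $\widehat\Gamma$ is strictly larger than its congruence completion and is not visibly reconstructed from it, so the arithmetic data cannot simply be read off. What saves the commensurability statement is that in higher rank the congruence kernel, although possibly infinite, is central, hence has only abelian finite quotients and so is transparent to the products-of-finite-simple-groups quotients that carry all the information used above. I expect the residual work — bookkeeping the ramified places and the places of $S'$, and controlling the various ``up to finitely many choices'' — to be routine but somewhat lengthy, with all the conceptual content residing in the centrality of the congruence kernel together with strong approximation.
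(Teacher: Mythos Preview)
Your outline is workable but takes a genuinely different route from the paper. The paper never invokes centrality of the congruence kernel and never looks at finite simple quotients. Instead, for each rational prime $p$ it studies the maximal $p$-adic analytic quotient of $\widehat\Gamma$: applying Margulis super-rigidity directly to the composite $\Gamma\hookrightarrow\widehat\Gamma\twoheadrightarrow\widehat\Gamma/M\subset H(\bQ_p)$, it shows (Propositions~\ref{prop:lie iso SR} and~\ref{prop:maximal}) that the $\bQ_p$-Lie algebra of any such maximal quotient is canonically $L_p^\Gamma:=\mathrm{Lie}_{\bQ_p}\bigl(\prod_{v\notin S,\,v\mid p}G(k_v)\bigr)$, an invariant of $\widehat\Gamma$ alone. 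From the collection $\{L_p^\Gamma\}_p$ one then reads off $[k:\bQ]$, $\dim G$, the set $S^{\mathrm{full}}$, the set of totally split primes (hence the normal closure of $k$ via Chebotarev), and $\mathrm{Res}_{k/\bQ}G$ over $\bQ_p$ at each full $p$; a final appeal to Borel--Serre \cite{BS64} bounds the number of global $\bQ$-forms agreeing at almost all places, and the commensurability finiteness follows.

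The conceptual difference is that the paper's only deep input is super-rigidity itself; it is completely agnostic about the structure of the congruence kernel. Your argument, by contrast, rests on the theorem that the congruence kernel is central in $S$-rank $\geq 2$ (Raghunathan, Prasad--Raghunathan, \ldots), which is substantial and whose unconditional validity across all of $\cA$ you would need to document; the paper's route sidesteps this entirely. On the other hand, your approach extracts finer local data (residue degrees and outer twists, not just the split primes), which is more than is needed here but conceptually pleasant. One correction in your step~(iii): what you want is not the Hasse principle for simply connected groups --- that is injectivity of $H^1(k,G)\to\prod_v H^1(k_v,G)$ and says nothing about forms of $G$ --- but rather the Borel--Serre finiteness theorem, which is exactly the statement that a semisimple $k$-group has only finitely many $k$-forms agreeing with it at all but finitely many places.
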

The profinite completions of $\Gamma\in\cA$ is strongly related to
the $p$-adic closure of $\Gamma$. Thus, in some sense, the proofs
of the above results relies on group and representation theoretic
methods that are used for extracting this {}``local'' information
about $\Gamma$, and then using some local-to-global finiteness results.

Theorem \ref{thm:CSP} answers Problem I of \cite{GZ11} for the family
of $S$-arithmetic groups, and the examples we give in $\S4$ answer
problems II and III for the family of $S$-arithmetic groups. Moreover,
the proofs of Theorems \ref{thm:CSP},\ref{Main Theorem 1} can give
bounds on the size of $g(\cA,\Gamma)$ for various $\Gamma\in\cA$.
In particular we show,
\begin{thm}
\label{thm:Unbounded genus}For any $n\in N$ there exists $\Gamma\in\cA$
with $|g(\cA,\Gamma)|>n$.
\end{thm}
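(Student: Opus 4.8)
The plan is to produce, inside a single commensurability class, an arbitrarily large family of pairwise non-isomorphic members of $\cA$ with mutually isomorphic profinite completions. By Theorem~\ref{Main Theorem 1} the construction \emph{has} to stay within one commensurability class, and it runs parallel to the classical fact that a genus of integral quadratic forms may contain many classes: the ``class number'' that makes the genus large will be the $3$-rank of an ideal class group.

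Fix a totally real number field $k$ whose class group has $3$-rank at least $r$ and on whose $3$-part $\mathrm{Aut}(k)$ acts trivially; a cyclic cubic field ramified at $t\geq r+1$ rational primes works, since its genus field gives an unramified $(\bZ/3\bZ)^{t-1}$-extension on which $\mathrm{Gal}(k/\bQ)$, being abelian over $\bQ$, acts trivially. Put $G=SL_3$ and $\Gamma_0:=SL_3(\cO_k)=G(\cO_{k,S})$ with $S$ the Archimedean places. Then $\Gamma_0\in\cA$ ($G$ is simply connected and absolutely almost simple, of $S$-rank $2[k:\bQ]\geq 2$), it has the congruence subgroup property with trivial congruence kernel (Bass--Milnor--Serre; see \cite{PR94}), so $\widehat{\Gamma_0}\cong\prod_{\fp}SL_3(\cO_\fp)$ over the finite places $\fp$ of $k$. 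Choose primes $\fp_1,\dots,\fp_r$ lying over distinct rational primes and whose classes are $\bF_3$-independent inside the ($\mathrm{Aut}(k)$-fixed) genus part of $\mathrm{Cl}(k)/3\,\mathrm{Cl}(k)$ (possible by Chebotarev). Finally recall that $SL_3(k_{\fp_i})$ has exactly three conjugacy classes of maximal compact subgroups, the stabilizers $K^{(0)}_{\fp_i},K^{(1)}_{\fp_i},K^{(2)}_{\fp_i}$ of the three vertex-types of its Bruhat--Tits building; these are pairwise non-conjugate in $SL_3(k_{\fp_i})$ but all abstractly isomorphic as profinite groups, being conjugate inside $GL_3(k_{\fp_i})$.

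For $c=(c_1,\dots,c_r)\in(\bZ/3\bZ)^r$ set $\Gamma_c:=SL_3(k)\cap\prod_{\fp}K^{c}_\fp$, with $K^{c}_\fp=SL_3(\cO_\fp)$ for $\fp\notin\{\fp_i\}$ and $K^{c}_{\fp_i}=K^{(c_i)}_{\fp_i}$. Each $\Gamma_c$ is a lattice commensurable with $\Gamma_0$, hence lies in $\cA$ and has the congruence subgroup property, and by strong approximation $\widehat{\Gamma_c}\cong\prod_\fp K^{c}_\fp$, which by the local isomorphisms above is isomorphic to $\prod_\fp SL_3(\cO_\fp)\cong\widehat{\Gamma_0}$; thus $\{\Gamma_c\}\subseteq\cA_{\Gamma_0}$. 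To bound the number of their isomorphism classes from below I would invoke Margulis super-rigidity in the form quoted in the introduction: an isomorphism $\Gamma_c\cong\Gamma_{c'}$ is induced by an element of $\mathrm{Aut}_k(SL_3)=PGL_3(k)\rtimes\langle\tau\rangle$ (with $\tau$ the transpose-inverse involution) composed with a field automorphism of $k$. One then tracks how such a map acts on the vertex-type data at every place simultaneously: conjugation by a lift $\tilde g\in GL_3(k)$ shifts the type at $\fp$ by $v_\fp(\det\tilde g)\bmod 3$, $\tau$ negates all types, and a field automorphism permutes the $\fp_i$ within their (fixed) ideal classes. Requiring the image to again carry type $0$ at every $\fp\notin\{\fp_i\}$ forces $\det\tilde g$ to be trivial modulo cubes and units away from the $\fp_i$ and their translates, and the resulting constraint says precisely that the net type-shift $d\in(\bZ/3\bZ)^r$ satisfies $\sum_i d_i[\fp_i]=0$ in $\mathrm{Cl}(k)/3\,\mathrm{Cl}(k)$, hence $d=0$ by the independence of the $[\fp_i]$. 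Therefore $\Gamma_c\cong\Gamma_{c'}$ implies $c'=\pm c$, so $|g(\cA,\Gamma_0)|\geq(3^r+1)/2$; choosing $r$ with $(3^r+1)/2>n$ completes the proof.

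The main obstacle is exactly that last computation: faithfully translating ``$\Gamma_c\cong\Gamma_{c'}$'' into the vanishing $\sum_i d_i[\fp_i]=0$ in $\mathrm{Cl}(k)/3\,\mathrm{Cl}(k)$, which amounts to understanding how the outer and field automorphisms of $SL_3$ over $k$ move vertices in all the local buildings at once — and in particular controlling the $\mathrm{Aut}(k)$-contribution, which is the reason for placing the $\fp_i$ inside the $\mathrm{Aut}(k)$-fixed genus part of the class group. The two supporting inputs — existence of totally real fields with arbitrarily large $3$-rank class group (genus theory of cyclic cubic fields), and the fact that the three maximal parahorics of $SL_3$ over a local field are pairwise non-conjugate yet abstractly isomorphic — are standard but should be stated. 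Finally, since the argument passes through Theorem~\ref{thm:CSP}, one must record that the congruence kernel genuinely vanishes here (hence the hypothesis that $k$ is not totally imaginary), so that $\widehat{\Gamma_c}$ is honestly the congruence completion $\prod_\fp K^{c}_\fp$.
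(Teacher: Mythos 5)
Your proposal is correct in outline but follows a genuinely different route from the paper. The paper proves Theorem \ref{thm:Unbounded genus} through the explicit families of Section \ref{sec:Examples}: arbitrarily many real forms $Spin(1+4k,n-4k)$ over a real quadratic field (the property (T) examples of \cite{Aka10}), $SL_n$ over unboundedly many arithmetically or adelically equivalent fields, and the groups $\Gamma_I=SL_d(\cO_{k,I\cup V_\infty^k})$ attached to subsets $I$ of the places over a totally split prime, for which $\widehat{\Gamma_I}\cong\widehat{\Gamma_J}$ if and only if $|I|=|J|$ while $\Gamma_I\cong\Gamma_J$ if and only if $I,J$ are Galois-conjugate, giving at least ${n \choose l}/n$ classes. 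In all of these the members of the genus are separated by their ambient data (real form, field of definition, or $S$), so the large genus is spread over many commensurability classes; your construction instead keeps $(G,k,S)=(SL_3,k,V_\infty^k)$ fixed and lets the $3$-part of the ideal class group produce many isomorphism classes inside a single commensurability class, by twisting the vertex types of the local maximal parahorics at primes whose classes are independent in $\mathrm{Cl}(k)/3\,\mathrm{Cl}(k)$. That is closer in spirit to the quadratic-form genus, and it shows something the paper's examples do not: the finiteness in Theorem \ref{thm: B family} cannot be improved to a uniform bound. The price is heavier machinery (Bruhat--Tits types, genus theory of cyclic cubic fields, careful superrigidity bookkeeping), whereas the paper's examples ride on CSP together with known field-theoretic or Galois-cohomological facts.

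One step deserves explicit care. Your class-group relation needs the type constraint $c'_{\sigma(\fp)}\equiv \pm c_\fp+v_{\sigma(\fp)}(\det\tilde g)$ at \emph{every} finite place outside the chosen $\fp_i$ and their $\sigma$-translates; superrigidity a priori only identifies the abstract isomorphism $\phi$ with an algebraic map $\alpha$ on a finite-index subgroup, and if they disagreed elsewhere you would lose control of $v_\fq(\det\tilde g)$ at an unknown finite set of places, contaminating the relation with stray ideal classes. Here this is harmless and should be recorded: the discrepancy $\gamma\mapsto\alpha(\gamma)^{-1}\phi(\gamma)$ is a homomorphism into the centre $\mu_3(k)$, which is trivial since your $k$ is totally real (and central elements act trivially on the buildings in any case), so $\alpha(\Gamma_c)=\Gamma_{c'}$ on the nose and the closure argument applies at every place; similarly, you should note that for the order-$3$ action of $\mathrm{Gal}(k/\bQ)$ on $\mathrm{Cl}(k)/3\,\mathrm{Cl}(k)$ the invariant subspace has the same dimension as the genus-theoretic quotient you invoke, so the primes $\fp_i$ with the required fixed, independent classes do exist.
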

This paper is organized as follows: After giving the necessary definitions
and preliminaries in $\S2$, we prove Theorem \ref{Main Theorem 1}
in $\S3$. In $\S4$ we give examples of non-isomorphic elements of
$\cA$ that share the same profinite completion. These examples show
that our proof of Theorem \ref{Main Theorem 1} is {}``tight'' in
the sense that all the steps of the proof are reflected in non-trivial
examples. Moreover, these examples provide three different types of
families which establish Theorem \ref{thm:Unbounded genus}. In $\S5$
we restrict ourselves to a fixed commensurability class $\cC$ whose
elements admit the Congruence Subgroup Property and show that $\cC_{\Gamma}=\{\Lambda\in\cC:\widehat{\Gamma}\cong\widehat{\Lambda\}}$
is a disjoint union of finitely many isomorphism classes. Finally,
we deduce Theorem \ref{thm:CSP} in section \ref{sec:deduce main thm}.

\subsection*{Acknowledgments}

This paper is a part of the Author's Ph.D. thesis at the Hebrew University.
I would like to thank my adviser for suggesting the above subject
and for his guidance and ideas. I discussed various parts of this
paper with many people. In particular, I would like to thank Nir Avni,
Chen Meiri and Andrei Rapinchuk. During the period of work on this
paper, I enjoyed the support of the ERC, ISEF foundation, Ilan and
Asaf Ramon memorial foundation and the \textquotedbl{}Hoffman Leadership
and Responsibility\textquotedbl{} fellowship program, at the Hebrew
University.

\section{Notation, conventions and preliminaries}

We work with the definitions and notation from \cite{PR94}; relevant
results can be found there (especially in chapters 1 and 5).

\subsection{Field and group-theoretic notation}

For a number field $k$ we let $\cO_{k}$ be the ring of integers
of $k$, and $V^{k}$,$V_{f}^{k}$,$V_{\infty}^{k}$ - the set of
all places of $k$, non-Archimedean places of $k$, Archimedean places
of $k$, respectively. For $v\in V^{k}$, we let $k_{v}$ be the corresponding
completion, and $\cO_{v}$ is the ring of integers of $k_{v}$. The
letter $S$ always denotes a finite subset of $V^{k}$ which contains
$V_{\infty}^{k}$.%
{} Let $\cO_{k,S}:=\{x\in\cO_{k}:v(x)\geq0,\forall v\notin S\}$ be
the ring of $S$-integers. Finally, $\bA_{k}$ denoted the Adeles
ring of $k$ and $\bA_{k,S}$ the ring of $S$-Adeles (See section
\ref{sub:Adeles} and \cite[1.25]{PR94}). When $\Gamma_{1}$ is of
finite-index in $\Gamma_{2}$, we write $\Gamma_{1}<_{fi}\Gamma_{2}$.

\subsection{A {}``working'' definition for the family $\cA$.\label{sub:working-definition A}}

In order to make the reduction steps of the proof of Theorem \ref{thm:CSP}
precise, we will need a more technical definition of the family $\cA$.
We begin by defining the notion of commensurability:
\begin{defn}
\label{def: commen}Two subgroups $\Gamma_{1},\Gamma_{2}$ of a group
$H$ are said to be \emph{commensurable} if $\Gamma_{1}\cap\Gamma_{2}$
is of finite-index in both $\Gamma_{1}$ and $\Gamma_{2}$. Two groups
$\Gamma_{1},\Gamma_{2}$ are said to be \emph{abstractly commensurable}
if there is a group $H$ containing isomorphic copies $\Gamma_{1}',\Gamma_{2}'$
of $\Gamma_{1},\Gamma_{2}$ respectively, such that $\Gamma_{1}'$
and $\Gamma_{2}'$ are commensurable.
\end{defn}
Since isomorphic $\Gamma,\Gamma'\in\cA$ may have different ambient
information, it will be convenient to us to record the ambient information
as part of the definition of $\cA$:
\begin{defn}
\label{def: family A concrete} An element of $\cA$ is a quadruple
$(\Gamma,(G,\rho),k,S)$ such that 
\begin{enumerate}
\item $k$ is a number field, i.e., a finite extension of $\bQ$, and $G\sra{\rho}GL_{n}$
is a faithful $k$-representation of $G$ as a matrix group, in which
$G$ is a simply-connected, almost and absolutely simple and defined
over $k$. We identify $G$ with its image under $\rho$.
\item $ $The group $\Gamma\subset G(\bar{k})$ is commensurable to $G(\cO_{k,S}):=G\cap GL_{n}(\cO_{k,S})$
where $S$ is a finite subset of $V^{k}$ containing $V_{\infty}^{k}$. 
\item The $S$-rank of $G$ is $\geq2$, i.e., $\sum_{v\in S}Rank_{k_{v}}(G)\geq2$
where $Rank_{k_{v}}(G)$ is the dimension of a maximal $k_{v}$-split
tori in $G(k_{v})$ .
\end{enumerate}
We call $(G,k,S)$ the ambient information of $\Gamma$, $G$ the
ambient group of $\Gamma,$ $k$ the field of definition of $\Gamma$
and $S$ the valuations of $\Gamma$, and denote them by $G_{\Gamma},k_{\Gamma},S_{\Gamma}$
respectively. Often, we will leave them implicit. The \emph{commensurability
class} of an element $\Gamma\in\cA$ is the set of all elements of
$\cA$ which are abstractly commensurable to it. 
\end{defn}
We wish to make some further assumption on $S_{\Gamma}$ without restricting
the generality of the family $\cA$.
\begin{lem}
\label{lem:finiteindex free of compacts}Let $\Gamma=(\Gamma,G,k,S)\in\cA$
and let $C:=\{v\in V_{f}^{k}:G(k_{v})\text{ is compact}\}.$ Then
$G(\cO_{k,S\setminus C})$ is a finite-index subgroup of $G(\cO_{k,S})$.
In particular, $\Gamma=(\Gamma,G,k,S\setminus C)\in\cA$ can also
serve as ambient information for $\Gamma$.\end{lem}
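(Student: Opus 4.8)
The plan is to present $G(\cO_{k,S\setminus C})$ as the preimage of a compact open subgroup under the diagonal embedding of $G(\cO_{k,S})$ into the product of the local groups at the finitely many compact places lying in $S$, and then to observe that discarding those places changes neither the ambient group and field nor the $S$-rank.

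First I would record the elementary identity $\cO_{k,S\setminus C}=\cO_{k,S}\cap\bigcap_{v\in C\cap S}\cO_{v}$, the intersection taken inside $k$; this holds because the places outside $S$ are non-Archimedean and $S$ is finite, so in particular $C':=C\cap S$ is a finite set (I never need the fact, also true, that $C$ itself is finite). Unwinding the definition of $GL_{n}$ over these rings, an element $\gamma\in G(\cO_{k,S})$ lies in $G(\cO_{k,S\setminus C})$ precisely when, for every $v\in C'$, both $\gamma$ and $\gamma^{-1}$ have $\cO_{v}$-entries, i.e. $\gamma\in K_{v}:=G(k_{v})\cap GL_{n}(\cO_{v})$. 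Writing $\iota\colon G(\cO_{k,S})\to\prod_{v\in C'}G(k_{v})$ for the diagonal map induced by the embeddings $k\hookrightarrow k_{v}$, this says exactly $G(\cO_{k,S\setminus C})=\iota^{-1}\bigl(\prod_{v\in C'}K_{v}\bigr)$, so in particular it is a subgroup of $G(\cO_{k,S})$.

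Next I would use two standard local facts. Each $K_{v}$ is a compact open subgroup of $G(k_{v})$, since $GL_{n}(\cO_{v})$ is compact and open in $GL_{n}(k_{v})$ and $G(k_{v})$ is closed in it; and for $v\in C'\subseteq C$ the group $G(k_{v})$ is compact by the very definition of $C$, so $[G(k_{v}):K_{v}]<\infty$. Hence $\prod_{v\in C'}K_{v}$ has finite index in the compact group $\prod_{v\in C'}G(k_{v})$, and since the index of the preimage of a subgroup under a homomorphism is bounded by the index of that subgroup, $[\,G(\cO_{k,S}):G(\cO_{k,S\setminus C})\,]\le\prod_{v\in C'}[G(k_{v}):K_{v}]<\infty$. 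This proves the first assertion.

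Finally, for the ``in particular'' clause I would verify the three conditions defining $\cA$ for the quadruple $(\Gamma,(G,\rho),k,S\setminus C)$. Condition (1) is untouched. For (2): $S\setminus C$ is finite and still contains $V_{\infty}^{k}$ (as $C\subseteq V_{f}^{k}$), and $\Gamma$, being commensurable to $G(\cO_{k,S})$, is commensurable to its finite-index subgroup $G(\cO_{k,S\setminus C})$. For (3) the key point is that each removed place contributes $0$ to the rank sum: if $Rank_{k_{v}}(G)\ge1$ then $G$ contains a $k_{v}$-split $\bG_{m}$, so $k_{v}^{\times}$ sits as a closed, non-compact subgroup of $G(k_{v})$, contradicting $v\in C$; hence $Rank_{k_{v}}(G)=0$ for all $v\in C$ and $\sum_{v\in S\setminus C}Rank_{k_{v}}(G)=\sum_{v\in S}Rank_{k_{v}}(G)\ge2$. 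The only step requiring any genuine care is the identification $G(\cO_{k,S\setminus C})=\iota^{-1}(\prod_{v\in C'}K_{v})$, a short bookkeeping argument with valuations; everything after that is immediate from the compactness of $\prod_{v\in C'}G(k_{v})$.
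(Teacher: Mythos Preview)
Your proof is correct and follows essentially the same approach as the paper: both use the diagonal embedding into the local factors at the compact places in $S$ and bound the index by the finite product $\prod_{v\in C\cap S}[G(k_{v}):G(\cO_{v})]$. Your version is in fact more careful---you work with $C'=C\cap S$ rather than $C$, and you explicitly verify the ``in particular'' clause (including the observation that $Rank_{k_{v}}(G)=0$ for $v\in C$), which the paper leaves implicit.
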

\begin{proof}
Recall that $\cO_{k,T}:=\{x\in k:x\in\cO_{v},\forall v\notin T\}$.
Using this, one can see that the diagonal embedding $G(\cO_{k,S})\ra\prod_{v\in S}G(k_{v})$
induces the inequality
\begin{align*}
[G(\cO_{k,S}):G(\cO_{k,S\setminus C})] & \leq[\prod_{v\in S}G(k_{v}):\prod_{v\in S\setminus C}G(k_{v})\times\prod_{v\in C}G(\cO_{v})]\\
 & =\prod_{v\in C}[G(k_{v}):G(\cO_{v})].
\end{align*}
Each factor of the latter is the number of cosets of an open subgroup
of a compact group, hence finite. 
\end{proof}
From now on, whenever an element $\Gamma=(\Gamma,G,k,S)\in\cA$ is
specified, we will always assume that $S$ does not contain a valuation
$v$ for which $G(k_{v})$ is compact. 
\begin{lem}
\label{lem:k-iso commen}Let $(\Gamma,G,k,S)\in\cA$ and let $\varphi:G\ra G'$
a homomorphism of algebraic groups where $G'$ is also defined over
$k$ and $\varphi$ is a $k$-isomorphism. Then, $\varphi(\Gamma)$
is commensurable to $G'(\cO_{k,S})$.\end{lem}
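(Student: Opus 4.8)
The plan is to reduce the assertion to the standard fact that the commensurability class of $G(\cO_{k,S})$ inside $G(k)$ does not depend on the chosen faithful $k$-embedding of $G$ (one could also simply quote the relevant statement from \cite[\S4.1]{PR94}, but I prefer to give the argument). Since $\varphi$ is an isomorphism of algebraic groups over $k$, it restricts to a group isomorphism $G(\bar{k})\to G'(\bar{k})$ that carries commensurable subgroups to commensurable subgroups; as $\Gamma$ is commensurable to $G(\cO_{k,S})$ inside $G(\bar{k})$, it therefore suffices to prove that $G(\cO_{k,S})$ and $\varphi^{-1}\bigl(G'(\cO_{k,S})\bigr)$ are commensurable subgroups of $G(k)$, and then to push the resulting commensurabilities through $\varphi$ and chain them.

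First I would describe these two groups locally. Fix faithful $k$-embeddings $G\subset GL_{n}$ and $G'\subset GL_{n'}$. For $v\in V_{f}^{k}$ set $K_{v}:=G(k_{v})\cap GL_{n}(\cO_{v})$ and $K_{v}':=\{g\in G(k_{v}):\varphi(g)\in GL_{n'}(\cO_{v})\}$. Since $\varphi$ is an isomorphism of $k$-varieties it induces a homeomorphism of $G(k_{v})$ onto $G'(k_{v})$, so $K_{v}$ and $K_{v}'$ are both open compact subgroups of $G(k_{v})$; and, by the usual bounded-denominators observation, $G(\cO_{k,S})=\{g\in G(k):g\in K_{v}\text{ for all }v\notin S\}$ while $\varphi^{-1}\bigl(G'(\cO_{k,S})\bigr)=\{g\in G(k):g\in K_{v}'\text{ for all }v\notin S\}$.

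The \emph{main point} — and, I expect, the only place where anything really has to be checked — is that $K_{v}=K_{v}'$ for all but finitely many $v$. To see this, express the matrix entries of $\varphi$ and of $\varphi^{-1}$ as regular functions on the affine $k$-varieties $G$ and $G'$; these are finitely many functions with coefficients in $k$, so there is a finite set $F\subset V_{f}^{k}$ such that for every $v\notin F$ all of these coefficients lie in $\cO_{v}$. For $v\notin S\cup F$ and $g\in K_{v}$ the entries of $g$ and of $g^{-1}$, together with $\det(g)^{\pm1}$, are $\cO_{v}$-integral, hence so are the entries of $\varphi(g)$ and of $\varphi(g)^{-1}=\varphi(g^{-1})$; therefore $\varphi(g)\in GL_{n'}(\cO_{v})$, i.e.\ $g\in K_{v}'$, so $K_{v}\subseteq K_{v}'$. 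The reverse inclusion follows identically using the integrality of $\varphi^{-1}$, so $K_{v}=K_{v}'$.

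Finally I would assemble the pieces. For the finitely many $v\in F\setminus S$, the group $K_{v}\cap K_{v}'$ is open, hence of finite index, in each of the compact groups $K_{v}$ and $K_{v}'$. Comparing the local conditions of the second paragraph, the index of $G(\cO_{k,S})\cap\varphi^{-1}(G'(\cO_{k,S}))$ in $G(\cO_{k,S})$ is at most $\prod_{v\in F\setminus S}[K_{v}:K_{v}\cap K_{v}']<\infty$, and its index in $\varphi^{-1}(G'(\cO_{k,S}))$ is at most $\prod_{v\in F\setminus S}[K_{v}':K_{v}\cap K_{v}']<\infty$; thus $G(\cO_{k,S})$ and $\varphi^{-1}(G'(\cO_{k,S}))$ are commensurable in $G(k)$. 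Applying $\varphi$ shows $\varphi(G(\cO_{k,S}))$ and $G'(\cO_{k,S})$ are commensurable in $G'(\bar{k})$, and since the isomorphism $\varphi$ also takes the commensurable pair $\Gamma,G(\cO_{k,S})$ to the commensurable pair $\varphi(\Gamma),\varphi(G(\cO_{k,S}))$, transitivity of commensurability yields that $\varphi(\Gamma)$ is commensurable to $G'(\cO_{k,S})$, as desired.
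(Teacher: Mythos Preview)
Your argument is correct; it is precisely the standard proof that the $S$-arithmetic structure is independent of the chosen $k$-embedding. The paper's own proof consists of a single citation to \cite[Theorem 4.1]{PR94}, so rather than taking a different route you have simply unpacked the reference the author points to.
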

\begin{proof}
See \cite[Theorem 4.1]{PR94}.
\end{proof}
We will have a particular interest in the rational primes that has
no valuation in $S$ above them; given a finite subset $S\subset V^{k}$
we denote by $S^{full}$ the set of rational primes $p$ for which
${v|p}$ imply that $v\notin S$. We call a prime number $p$\emph{
full for} $\Gamma$ if $p\in S_{\Gamma}^{full}:=(S_{\Gamma})^{full}$. 

Note that we do not impose the condition that $\Gamma$ lie in $G_{\Gamma}(k_{\Gamma})$.
If it does lie in it, i.e. if $\Gamma\subset G_{\Gamma}(k_{\Gamma})$,
we say that $\Gamma$ is \emph{rational}. Note that in many cases,
maximal arithmetic subgroups lie outside $G_{\Gamma}(k_{\Gamma})$.

\subsection{Profinite completion of a group\label{sub:Profinite-groups}}

For background on profinite groups the reader can consult the book
of Ribes and Zalesskii \cite{RZ2010}. For completeness, we record
here the definition of the profinite completion of a group. Let $\Gamma$
be a finitely generated group. The profinite topology on $\Gamma$
is defined by taking as a fundamental system of neighborhoods of the
identity the collection of all normal subgroups $N$ of $\Gamma$
such that $\Gamma/N$ is finite. One can easily show that a subgroup
$H$ is open in $\Gamma$ if and only if $H<_{fi}\Gamma$. We can
complete $\Gamma$ with respect to this topology to get 
\[
\widehat{\Gamma}:=\varprojlim\{\Gamma/N:N\vartriangleleft_{fi}\Gamma\};
\]
this is the profinite completion of $\Gamma$. There is a natural
homomorphism $i:\Gamma\rightarrow\widehat{\Gamma}$ given by $i(\gamma)=\varprojlim(\gamma N)$.
A group is called \emph{residually finite} if $i$ is injective. One
can show (\cite[Corollary 3.2.8]{RZ2010}) that for finitely generated
groups $\widehat{\Gamma_{1}}\cong\widehat{\Gamma_{2}}$ if and only
if their set of finite quotients are equal.
\begin{lem}
\label{lem:subgroups of profinite completion} Let $\Gamma$ be a
residually finite group. There is a one-to-one correspondence between
the set $\cX$ of all finite-index subgroups of $\Gamma$ and the
set $\cY$ of all open subgroups of $\widehat{\Gamma}$, given by
\begin{gather*}
X\mapsto cl(X)\quad(X\in\cX)\\
Y\mapsto Y\cap\Gamma\quad(Y\in\cY)
\end{gather*}
where $cl(X)$ denotes the closure of $X$ in $\widehat{\Gamma}$.
Moreover, 
\[
[\Gamma:X]=[\widehat{\Gamma}:cl(X)].
\]
\end{lem}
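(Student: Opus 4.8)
The claim is the standard Galois-type correspondence between finite-index subgroups of a residually finite group $\Gamma$ and open subgroups of its profinite completion $\widehat\Gamma$, together with index-preservation. I would treat $\Gamma$ as a subgroup of $\widehat\Gamma$ via the injection $i$ (legitimate since $\Gamma$ is residually finite), so that "$Y\cap\Gamma$" makes literal sense and "$cl(X)$" is the topological closure inside the profinite group $\widehat\Gamma$. The plan is to check four things in order: (1) the two assignments land in the claimed sets; (2) $cl(X)\cap\Gamma=X$ for $X\in\cX$; (3) $cl(Y\cap\Gamma)=Y$ for $Y\in\cY$; (4) the index equality. I expect step (3) — showing that a finite-index subgroup of $\Gamma$ is dense enough in $Y$ that its closure recovers all of $Y$ — to be the one requiring the most care, since it is where one genuinely uses that $\Gamma$ is dense in $\widehat\Gamma$ and that $Y$ is open.

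\emph{Step 1.} If $X<_{fi}\Gamma$, then $X$ contains a finite-index \emph{normal} subgroup $N$ of $\Gamma$ (the core), and $N$ is open in the profinite topology by definition of $\widehat\Gamma$; its closure $\bar N$ is open in $\widehat\Gamma$, hence $cl(X)\supseteq\bar N$ is open. Conversely, if $Y$ is open in $\widehat\Gamma$ then $Y$ has finite index in $\widehat\Gamma$ (open subgroups of a compact group are of finite index), so $Y\cap\Gamma$ has finite index in $\Gamma$; it is therefore in $\cX$.

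\emph{Step 2 ($Y\mapsto X\mapsto Y$ on the level of $\cX$).} For $X\in\cX$ one inclusion is trivial: $X\subseteq cl(X)\cap\Gamma$. For the reverse, pick the normal core $N\vartriangleleft_{fi}\Gamma$ with $N\subseteq X$. The quotient map $\Gamma\to\Gamma/N$ extends to $\widehat\Gamma\to\Gamma/N$ (a continuous surjection with kernel $\bar N$), and under it $cl(X)$ maps into the closure of the image of $X$, which is the finite — hence closed — set $X/N$. Thus $cl(X)\subseteq\pi^{-1}(X/N)$, and intersecting with $\Gamma$ gives $cl(X)\cap\Gamma\subseteq X$. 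Hence $cl(X)\cap\Gamma=X$, so $Y\mapsto Y\cap\Gamma$ is a left inverse to $X\mapsto cl(X)$.

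\emph{Step 3 ($X\mapsto Y\mapsto X$ on the level of $\cY$) and Step 4 (indices).} Fix $Y\in\cY$ and put $X=Y\cap\Gamma\in\cX$. Since $Y$ is open it is also closed, so $cl(X)\subseteq Y$. For the reverse inclusion, use that $\Gamma$ is dense in $\widehat\Gamma$: given $y\in Y$, every basic open neighborhood $yU$ of $y$ (with $U$ an open normal subgroup of $\widehat\Gamma$ contained in $Y$) meets $\Gamma$, and any point of $yU\cap\Gamma$ lies in $yY\cap\Gamma=Y\cap\Gamma=X$; hence $y\in cl(X)$. Therefore $cl(Y\cap\Gamma)=Y$, and the two maps are mutually inverse bijections. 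For the index statement, let $X\in\cX$ with core $N$ and set $Y=cl(X)$, $\bar N=cl(N)$. The surjection $\pi\colon\widehat\Gamma\to\Gamma/N$ has kernel $\bar N$ and, by Step 2 applied to $N$ and to $X$, carries $Y$ onto $X/N$ and $\bar N$ onto $\{1\}$; thus $[\widehat\Gamma:Y]=[\Gamma/N:X/N]=[\Gamma:X]$. (Equivalently: the cosets of $X$ in $\Gamma$ biject with the cosets of $Y$ in $\widehat\Gamma$ because $\Gamma$ surjects onto the finite set $\widehat\Gamma/Y$ — by density — with the coset of $\gamma$ determined by whether $\gamma\in X$.) This completes the proof.
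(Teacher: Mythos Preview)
Your argument is correct: the four steps establish the bijection and the index equality, and the only slightly delicate point---Step~3, that $cl(Y\cap\Gamma)=Y$---is handled properly using density of $\Gamma$ in $\widehat\Gamma$ together with the existence of an open normal $U\subseteq Y$. One small expository wrinkle in Step~4: to deduce $[\widehat\Gamma:Y]=[\Gamma/N:X/N]$ you implicitly use $Y=\pi^{-1}(X/N)$ (equivalently $\ker\pi\subseteq Y$), not merely $\pi(Y)=X/N$; this equality does follow by combining Steps~2 and~3 (since $\pi^{-1}(X/N)$ is open with $\pi^{-1}(X/N)\cap\Gamma=X$), so the logic is sound, but you might state it explicitly. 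Your parenthetical alternative at the end---that $\Gamma$ surjects onto the finite coset space $\widehat\Gamma/Y$ by density, with fibers the $X$-cosets---is in fact the cleanest route to the index equality.

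By way of comparison, the paper does not prove this lemma at all: it simply cites \cite[Window: profinite groups, Proposition~16.4.3]{LubotzkySegal}. Your self-contained proof is the standard one and would be a reasonable replacement for the citation if one wanted the paper to be more self-contained; conversely, since the result is entirely classical, a bare reference is also appropriate.
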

\begin{proof}
See \cite[Window: profinite groups, Proposition 16.4.3]{LubotzkySegal} 
\end{proof}
As members of $\cA$ are residually finite we can use Lemma \ref{lem:subgroups of profinite completion},
to make the following definition: 
\begin{defn}
\label{def: FI corres} Given an isomorphism $\Phi:\widehat{\Gamma}\ra\widehat{\Lambda}$
we say that $\Lambda^{0}<_{fi}\Lambda$ correspond to $\Gamma^{0}<_{fi}\Gamma$
via $\Phi$ if $\Lambda^{0}=\Phi(\widehat{\Gamma^{0}})\cap\Lambda$
and denote $\Phi_{*}(\Gamma^{0})=\Lambda^{0}$.
\end{defn}
This sets up a correspondence between finite-index subgroups of $\Gamma$
and finite-index subgroups of $\Lambda$ which respects the relation
of containment. Moreover, note that by Lemma \ref{lem:subgroups of profinite completion}
we have $[\Lambda:\Phi_{*}(\Gamma^{0})]=[\Gamma:\Gamma^{0}]$.

\subsection{Adeles groups\label{sub:Adeles}}

For an algebraic group $G\subset GL_{n}$ that satisfies the assumptions
in Definition \ref{def: family A concrete}, let 
\[
G(\bA_{k}):=\underset{v\in V^{k}}{{\prod}^{*}}G(k_{v}),\quad G(\bA_{k,S}):=\underset{v\in V^{k},v\notin S}{{\prod}^{*}}G(k_{v})
\]
were $*$ denotes the restricted product of $G(k_{v})$ for all $v\in V^{k}$
with respect to $G(\cO_{v}):=G\cap GL_{n}(\cO_{v})$ (see \cite[\S 5.1]{PR94}
for details). For $v\notin S$, we denote by $\pi_{v}:G(\bA_{k,S})\ra G(k_{v})$
and $\pi_{p}:G(\bA_{k,S})\ra\prod_{v\notin S,v|p}G(k_{v})$ the natural
projections and we denote by $i_{v}:G(k_{v})\ra G(\bA_{k,S})$ and
by $i_{p}:\prod_{v\notin S,v|p}G(k_{v})\ra G(\bA_{k,S})$ the natural
injections.

\subsection{The congruence subgroup property\label{sub:CSP}}

We give here a brief survey; for proofs of the assertions below the
reader can consult \cite{PR08}. 

Given an element of $\cA$ of the form $G(\cO_{k,S})$ (as always,
there is an implicit embedding of $G$ to $GL_{n}$), for any non-zero
ideal $I$ of $\cO_{k,S}$ let $K_{I}$ be the kernel of the map 
\begin{equation}
G(\cO_{k,S})\ra G(\cO_{k,S}/I).\label{eq:Cong map}
\end{equation}
The completion of $G(\cO_{k,S})$, with respect to the topology in
which these kernels form a system of neighborhoods of the identity,
is called the congruence completion. We denote this completion by
$\ov{G(\cO_{k,S})}$. By strong approximation for $G$ w.r.t. S, the
maps in \ref{eq:Cong map} are surjective for all but finitely many
ideals $I$ (See \cite[7.4]{PR94}). Using this, one can show that
$\overline{G(\cO_{k,S})}$ is naturally isomorphic to the open compact
subgroup $\prod_{v\notin S}G(\cO_{v})$ of $G(\bA_{k,S})$. We have
following the short exact sequence 
\[
1\ra C(S,G)\ra\widehat{G(\cO_{k,S})}\ra\ov{G(\cO_{k,S})}\ra1
\]
and $C(S,G)$ is called the congruence kernel of $G$ w.r.t. $S$.
If $C(S,G)$ is finite, we say that $G$ admits the congruence subgroup
property. Finally, if $\Gamma$ is commensurable to $G(\cO_{k,S})$
then the completion of $\Gamma$ w.r.t. the family $\{K_{I}\cap\Gamma\}_{I\vartriangleleft\cO_{k,S}}$
is called the congruence completion of $\Gamma$ and is denoted by
$\ov{\Gamma}$. One has a similar short exact sequence

\[
1\ra C(S,\Gamma)\ra\widehat{\Gamma}\ra\ov{\Gamma}\ra1.
\]

\begin{defn}
We say that $\Gamma\in\cA$ has the congruence subgroup property if
$|C(S,\Gamma)|<\infty$, and we denote by $\cA_{CSP}$ the set of
elements of $\cA$ admitting the congruence subgroup property.\end{defn}
\begin{rem}
We used Lemma \ref{lem:finiteindex free of compacts} to show that
we can assume that for any $\Gamma=(\Gamma,G,k,S)\in\cA$, $S$ does
not contain a valuation $v$ for which $G(k_{v})$ is compact. This
assumption affect what we mean by {}``the congruence completion''.
As first noted by Raghunathan, when $S$ contain a valuation $v$
for which $G(k_{v})$ is compact, $G(\cO_{k,S})$ has finite-index
subgroups corresponding to its embedding in $G(k_{v})$ that are not
consider traditionally as congruence subgroups. For example, given
an element of the form $G(\bZ)\in\cA$ such that $G(\bQ_{p_{0}})$
is compact, we've seen in Lemma \ref{lem:finiteindex free of compacts}
that $G(\bZ)<_{fi}G(\bZ[\frac{1}{p_{0}}])$. Considering $G(\bZ[\frac{1}{p_{0}}])$
with the ambient information $(G,\bQ,\{\infty,p_{0}\})$ the congruence
completion of $G(\bZ[\frac{1}{p_{0}}])$ is equal to $\prod_{q\neq p_{0}}G(\bZ_{q})$.
On the other hand, as $G(\bZ)<_{fi}G(\bZ[\frac{1}{p_{0}}])$ we can
also consider $G(\bZ[\frac{1}{p_{0}}])$ with the ambient information
$(G,\bQ,\{\infty\})$; the congruence completion of $G(\bZ[\frac{1}{p_{0}}])$
in this case is equal to $G(\bQ_{p})\times\prod_{q\neq p_{0}}G(\bZ_{q})$.
See \cite[Page 42]{SW09} for more details and for related notion
of the {}``adelic completion''.
\end{rem}

\subsection{A number theoretic Lemma}

We record a consequence of Chebotarev density Theorem which characterize
the normal closure of a number field in terms of its splitting primes:
\begin{lem}
\label{Lemma split primes} Let $Spl(k)$ be the set rational primes
which are of totally split in $k$. If the symmetric difference of
$Spl(k)$ with $Spl(k')$ is a finite set then $k$ and $k'$ have
the same normal closure. \end{lem}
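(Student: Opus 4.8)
The plan is to reduce the claim to the classical form of Chebotarev's density theorem by passing to the Galois closures. Let $\tilde{k}$ and $\tilde{k}'$ denote the normal closures over $\QQ$ of $k$ and $k'$ respectively. The first step is to recall the standard fact (an immediate consequence of Chebotarev's density theorem applied inside $\tilde{k}$) that a rational prime $p$ unramified in $k$ splits completely in $k$ if and only if it splits completely in $\tilde{k}$; equivalently, $Spl(k)$ and $Spl(\tilde{k})$ differ only in the finitely many ramified primes. Hence it suffices to prove the statement for Galois extensions, i.e. to show that two finite Galois extensions $L, L'$ of $\QQ$ with $Spl(L) \bigtriangleup Spl(L')$ finite must be equal.

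For the Galois case, I would use the compositum $M := \tilde{k}\tilde{k}'$, which is itself Galois over $\QQ$. A prime $p$ unramified in $M$ splits completely in $M$ precisely when it splits completely in both $\tilde{k}$ and $\tilde{k}'$, so up to finitely many primes $Spl(M) = Spl(\tilde{k}) \cap Spl(\tilde{k}')$. By hypothesis the latter intersection differs from each of $Spl(\tilde{k})$ and $Spl(\tilde{k}')$ only by a finite set, so $Spl(M) \bigtriangleup Spl(\tilde{k})$ is finite. Now apply the quantitative Chebotarev density theorem: the natural density of $Spl(L)$ for a Galois extension $L/\QQ$ equals $1/[L:\QQ]$. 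Since densities are unaffected by finite sets, $1/[M:\QQ] = 1/[\tilde{k}:\QQ]$, forcing $[M:\QQ] = [\tilde{k}:\QQ]$, and since $\tilde{k} \subseteq M$ we get $M = \tilde{k}$, i.e. $\tilde{k}' \subseteq \tilde{k}$. By symmetry $\tilde{k} = \tilde{k}'$, which is the desired conclusion.

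The only genuinely substantive input is the Chebotarev density theorem in its density form (densities of splitting sets are $1/[L:\QQ]$ for Galois $L$), together with the robustness of density under removing finitely many primes; everything else is elementary splitting-behavior in towers of number fields. I do not anticipate a real obstacle here — the main point to be careful about is tracking ramified primes, but since there are only finitely many of them in any given extension, they are absorbed into the "finite symmetric difference" hypothesis without difficulty. An alternative, essentially equivalent, route would phrase things purely in terms of the Galois group: the set of splitting primes determines, via Chebotarev, the conjugacy class structure and in fact (for Galois extensions) the field itself inside a fixed algebraic closure, so equality of splitting sets up to finite error gives equality of the Galois closures.
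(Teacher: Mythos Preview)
Your argument is correct and is precisely the standard Chebotarev-based proof; the paper itself does not give an independent argument but simply cites Marcus' \emph{Number Fields} (Theorem~43 and Chapter~8, Exercise~1), whose content is exactly the reduction to Galois closures followed by the density computation you carry out. So your proof and the paper's cited reference take essentially the same route.
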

\begin{proof}
See \cite[Theorem 43 and Chapter 8 exercise 1]{Ma77}. 
\end{proof}

\section{Finiteness up-to commensurability}

\label{sec: reductions up-to commensurabilty} In this section we
prove Theorem \ref{Main Theorem 1}. The main step in our proof is
to gather enough information about the following Lie algebras: Given
$(\Gamma,G,k,S)\in\cA$, for any prime number $p$ let $L_{p}^{\Gamma}$
be the Lie algebra of 
\[
\prod_{v\notin S,v|p}G(k_{v})
\]
 considered over $\bQ_{p}$. The following proposition shows that
these Lie algebras are encoded in the group structure of the profinite
completion:
\begin{prop}
\label{prop:lie iso SR} Let $\Lambda\in\cA_{\Gamma}$. Then $L_{p}^{\Gamma}\cong L_{p}^{\Lambda}$
for all primes $p$.
\end{prop}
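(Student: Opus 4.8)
The plan is to recover the Lie algebras $L_p^\Gamma$ from the profinite completion by first passing to finite-index subgroups that are pro-$p$ in an appropriate local factor, and then invoking the Lazard correspondence together with the fact that the profinite isomorphism $\Phi:\widehat\Gamma\to\widehat\Lambda$ respects the lattice of finite-index subgroups (Definition \ref{def: FI corres}). More precisely, for a fixed prime $p$ I would use strong approximation (as recalled in \S\ref{sub:CSP}) to identify, inside a suitable finite-index subgroup of $\Gamma$, a principal congruence subgroup whose congruence completion at $p$ is an open compact subgroup of $\prod_{v\notin S,v\mid p}G(k_v)$; by a theorem of Lazard, a sufficiently deep open uniform pro-$p$ subgroup $U_\Gamma$ of that local factor has an associated $\ZZ_p$-Lie algebra whose $\QQ_p$-span is exactly $L_p^\Gamma$. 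The key point is that $U_\Gamma$, being open in $\prod_{v\notin S,v\mid p}G(k_v)$ and of finite index in $\Gamma$'s congruence completion, is detected group-theoretically.

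The heart of the argument is then to show that $\Phi$ carries such a ``$p$-local uniform piece'' of $\widehat\Gamma$ to one of $\widehat\Lambda$. Here I would argue as follows. Consider finite-index subgroups $\Gamma_n \triangleleft_{fi}\Gamma$ forming a descending chain of congruence subgroups whose pro-$p$ completions at $p$ shrink to $\{e\}$ while their completions away from $p$ stabilize; concretely one wants the closure of $\Gamma_n$ in $\widehat\Gamma$ to look, up to the congruence kernel, like $p^n\cdot(\text{lattice})$ at the places over $p$ and like a fixed compact group elsewhere. The Sylow theory of profinite groups, applied to the closures $\overline{\Gamma_n}$ and their pro-$p$ Sylow subgroups, lets one isolate the pro-$p$ part. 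Since $\Phi_*$ preserves indices (Lemma \ref{lem:subgroups of profinite completion}) and preserves the subgroup lattice, it sends the chain $\{\overline{\Gamma_n}\}$ to an analogous chain in $\widehat\Lambda$, hence sends a uniform open pro-$p$ subgroup $U_\Gamma$ attached to $L_p^\Gamma$ to a uniform open pro-$p$ subgroup $U_\Lambda$ of $\widehat\Lambda$; comparing ranks and dimensions (both are $\dim_{\QQ_p}L_p^\Gamma = \dim_{\QQ_p} L_p^\Lambda$ since these equal $p^n$-adic growth invariants visible in the group) one gets $U_\Gamma\cong U_\Lambda$ as pro-$p$ groups. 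By the Lazard correspondence, isomorphic uniform pro-$p$ groups have isomorphic $\QQ_p$-Lie algebras, giving $L_p^\Gamma\cong L_p^\Lambda$.

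One subtlety to handle is the congruence kernel $C(S,\Gamma)$: the excerpt does not assume the congruence subgroup property in this proposition, so $\widehat\Gamma$ is only an extension of $\overline\Gamma$ by $C(S,\Gamma)$. The fix is that the Lie algebra $L_p^\Gamma$ is an invariant of the \emph{quotient} $\overline\Gamma$, and the congruence kernel, being a normal closed subgroup that is ``small'' relative to the $p$-adic analytic local factors (its behavior is controlled by the Margulis--Platonov / metaplectic description), does not interfere with the $p$-adic dimension count — or, more simply, one works with Frattini-type quotients and the abelianization growth of the pro-$p$ Sylow subgroups, which are insensitive to a central or otherwise tame congruence kernel. \textbf{The main obstacle} I anticipate is precisely this bookkeeping: proving that the uniform pro-$p$ piece one extracts from $\widehat\Gamma$ genuinely sees $\prod_{v\notin S,v\mid p}G(k_v)$ and not some extraneous contribution from $C(S,\Gamma)$ (which can be infinite when CSP fails), and that the identification is canonical enough to transport cleanly through $\Phi$. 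I expect this to require a careful choice of the congruence chain and a use of the structure theory of $C(S,\Gamma)$, or alternatively a reduction to a commensurable subgroup where the relevant local factor is visibly a uniform pro-$p$ analytic group.
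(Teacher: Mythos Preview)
Your approach has a genuine gap, and it is precisely the obstacle you yourself flag at the end: the congruence kernel. You propose to isolate a uniform pro-$p$ subgroup $U_\Gamma$ inside $\widehat\Gamma$ whose Lazard Lie algebra is $L_p^\Gamma$, essentially by taking a pro-$p$ Sylow of a suitable congruence-type subgroup. But without CSP the congruence kernel $C(S,\Gamma)$ can contribute nontrivially to any pro-$p$ Sylow of $\widehat\Gamma$; if $C(S,\Gamma)$ is infinite it need not be $p$-adic analytic at all, so the pro-$p$ Sylow of $\widehat\Gamma$ need not be a $p$-adic analytic group with Lie algebra $L_p^\Gamma$. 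Your suggested fixes (``Frattini-type quotients'', ``abelianization growth'', ``structure theory of $C(S,\Gamma)$'') are not arguments: no structure theorem for $C(S,\Gamma)$ of the kind you would need is available in general, and abelianization growth of the pro-$p$ Sylow does not obviously ignore the kernel. There is a second, related problem: $\Phi$ preserves the lattice of open subgroups of $\widehat\Gamma$, but there is no reason it should send your congruence chain $\{\overline{\Gamma_n}\}$ to a congruence chain on the $\Lambda$ side; that it does so is essentially a CSP-type statement and cannot be assumed.

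The paper avoids all of this by working with \emph{quotients} rather than subgroups, and by invoking Margulis super-rigidity, which you do not use. The key invariant is the maximal dimension of a $p$-adic analytic quotient of $\widehat\Gamma$: this is manifestly an isomorphism invariant of $\widehat\Gamma$, so it transports under $\Phi$ for free. The content of the argument (Proposition~\ref{prop:maximal}) is that this maximum is attained by the natural map $\widehat\Gamma\to\prod_{v\notin S,\,v\mid p}G(k_v)$, and that any maximal $p$-adic analytic quotient has Lie algebra $L_p^\Gamma$. This is proved by applying super-rigidity to the composite $\Gamma\to\widehat\Gamma\to\widehat\Gamma/M\subset H(\bQ_p)$ for an arbitrary $M\in\cN(\Gamma,p)$: super-rigidity forces such a map to factor (on a finite-index subgroup) through $\mathrm{Res}_{k/\bQ}G(\bQ_p)$, which bounds $\dim H$ independently of the congruence kernel. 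Thus super-rigidity is exactly the tool that neutralizes $C(S,\Gamma)$, and it is the missing ingredient in your proposal.
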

We prove Proposition \ref{prop:lie iso SR} at the end of this section
using Margulis Super-Rigidity Theorem. We will first deduce Theorem
\ref{Main Theorem 1} from Proposition \ref{prop:lie iso SR} by extracting
arithmetic and algebraic information from the Lie algebras appearing
in Proposition \ref{prop:lie iso SR}.

\subsection{The Lie algebra $L_{p}^{\Gamma}$}

In this subsection we collect the arithmetic and geometric information
that can be deduced from knowing $L_{p}^{\Gamma}$ for all $p$.
\begin{lem}
\label{lem:Lie algebra} Let $(\Gamma,G,k,S)\in\cA$. Then, 
\begin{enumerate}
\item \label{lemitemRes:Lie algebra-1}The Lie algebra $L_{p}^{\Gamma}$
is semi-simple. Its simple components are $Lie_{\bQ_{p}}G(k_{v})$
for all $v\notin S,v|p$. 
\item \label{Lem lie algebra- simple comp} Let $n_{p}^{\Gamma}$ be the
number of simple components of $L_{p}^{\Gamma}$, and let $N_{\Gamma}:=max_{p}\{n_{p}^{\Gamma}\}$.
Then $n_{p}^{\Gamma}$ equals the number of valuations $v\notin S$
that lies over $p$. In particular, $N_{\Gamma}=[k:\bQ]$. Furthermore,
$n_{p}^{\Gamma}=N_{\Gamma}$ if and only if $p$ is a full prime for
$\Gamma$ that splits completely in $k$. 
\item \label{Lem lie algebra- restriction} For a full prime $p$, the Lie
algebra $L_{p}^{\Gamma}$ is naturally isomorphic to the the $\bQ_{p}$-Lie
algebra of $ $${\rm Res_{k/\bQ}G(\bQ_{p})}$. 
\item \label{lemitemRes:Lie algebra-4}Let $D_{p}^{\Gamma}:=dim_{\bQ_{p}}(L_{p}^{\Gamma})$
, $D_{\Gamma}=max_{p}(D_{p}^{\Gamma})$ and $dim(G):=dim_{\bC}(G(\bC))$.
Then 
\[
D_{p}^{\Gamma}=dim(G)\cdot\sum_{v\notin S,v|p}[k_{v}:\bQ_{p}]\quad\text{and }\quad D_{\Gamma}=dim(G)\cdot[k:\bQ].
\]
 Furthermore, $p$ is full if and only if $D_{p}^{\Gamma}=D_{\Gamma}$. 
\end{enumerate}
\end{lem}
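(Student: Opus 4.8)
The plan is to prove all four items essentially by unwinding the definition of $L_p^\Gamma$ and invoking the structure theory of semisimple groups over local fields together with elementary ramification theory. First I would establish item (1): by definition $L_p^\Gamma$ is the $\bQ_p$-Lie algebra of the group $H_p:=\prod_{v\notin S,\,v\mid p}G(k_v)$. Since a product of Lie groups has Lie algebra the direct sum of the factors' Lie algebras, $L_p^\Gamma\cong\bigoplus_{v\notin S,\,v\mid p}\operatorname{Lie}_{\bQ_p}G(k_v)$, so it suffices to see that each summand $\operatorname{Lie}_{\bQ_p}G(k_v)$ is simple as a $\bQ_p$-Lie algebra. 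Here $G$ is absolutely almost simple over $k$, hence $\operatorname{Lie}_{k_v}G(k_v)$ is a simple $k_v$-Lie algebra (its base change to $\overline{k_v}$ is the simple Lie algebra $\mathfrak g$); now $\operatorname{Lie}_{\bQ_p}G(k_v)$ is obtained by restriction of scalars from $k_v$ to $\bQ_p$, and restriction of scalars of a simple Lie algebra over a finite separable extension stays simple — any $\bQ_p$-ideal is in particular invariant under the $k_v$-action in the appropriate sense once one notes $k_v\otimes_{\bQ_p}\overline{\bQ_p}\cong\prod\overline{\bQ_p}$ permutes the $[k_v:\bQ_p]$ conjugate copies transitively. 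This gives the semisimplicity of $L_p^\Gamma$ and identifies its simple components with the $\operatorname{Lie}_{\bQ_p}G(k_v)$, $v\notin S$, $v\mid p$.

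Item (2) is then bookkeeping: the number $n_p^\Gamma$ of simple components equals $\#\{v\mid p:v\notin S\}$ by item (1). Since $\sum_{v\mid p}[k_v:\bQ_p]=[k:\bQ]$ for every $p$, the count $\#\{v\mid p\}$ is at most $[k:\bQ]$, with equality exactly when every $v\mid p$ has $[k_v:\bQ_p]=1$, i.e. $p$ splits completely in $k$; and to also have $n_p^\Gamma=\#\{v\mid p:v\notin S\}=[k:\bQ]$ we additionally need no $v\mid p$ to lie in $S$, i.e. $p$ full. Choosing $p$ both full and totally split (infinitely many exist by Chebotarev, since $S$ is finite) shows $N_\Gamma:=\max_p n_p^\Gamma=[k:\bQ]$. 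For item (3), when $p$ is full no place above $p$ is removed, so $H_p=\prod_{v\mid p}G(k_v)$; on the other hand for the restriction-of-scalars group $R:=\operatorname{Res}_{k/\bQ}G$ one has the standard isomorphism $R(\bQ_p)=(\operatorname{Res}_{k/\bQ}G)(\bQ_p)\cong G(k\otimes_{\bQ}\bQ_p)\cong\prod_{v\mid p}G(k_v)$, and this isomorphism is defined over $\bQ_p$, hence induces a $\bQ_p$-Lie algebra isomorphism $\operatorname{Lie}_{\bQ_p}R(\bQ_p)\cong\bigoplus_{v\mid p}\operatorname{Lie}_{\bQ_p}G(k_v)=L_p^\Gamma$.

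Finally, item (4): from the decomposition in item (1), $D_p^\Gamma=\dim_{\bQ_p}L_p^\Gamma=\sum_{v\notin S,\,v\mid p}\dim_{\bQ_p}\operatorname{Lie}_{\bQ_p}G(k_v)$; and $\dim_{\bQ_p}\operatorname{Lie}_{\bQ_p}G(k_v)=[k_v:\bQ_p]\cdot\dim_{k_v}\operatorname{Lie}_{k_v}G(k_v)=[k_v:\bQ_p]\cdot\dim(G)$, since $\dim_{k_v}\operatorname{Lie}G$ equals the absolute dimension $\dim(G)=\dim_{\bC}G(\bC)$ by base change. Summing gives $D_p^\Gamma=\dim(G)\sum_{v\notin S,\,v\mid p}[k_v:\bQ_p]\le\dim(G)\sum_{v\mid p}[k_v:\bQ_p]=\dim(G)[k:\bQ]$, with equality (hence $D_p^\Gamma=D_\Gamma$) precisely when no $v\mid p$ lies in $S$, i.e. $p$ full; again full totally split primes exist so the max $D_\Gamma=\dim(G)[k:\bQ]$ is attained. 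I expect the only genuinely delicate point to be the claim in item (1) that restriction of scalars preserves simplicity of the Lie algebra over a possibly ramified extension $k_v/\bQ_p$; this needs the separability of $k_v/\bQ_p$ (automatic in characteristic $0$) so that $k_v\otimes_{\bQ_p}\overline{\bQ_p}$ is étale and the Galois action is transitive on factors — everything else is routine dimension counting and the standard adelic/restriction-of-scalars identities in \cite[\S2.1]{PR94}.
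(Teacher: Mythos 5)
Your proposal is correct and follows essentially the same route as the paper: identify $L_p^{\Gamma}$ with $\bigoplus_{v\notin S,\,v\mid p}\operatorname{Lie}_{\bQ_p}G(k_v)$, use that restriction of scalars along the finite extension $k_v/\bQ_p$ preserves simplicity (the paper simply cites Bourbaki for this, where you sketch the étale/Galois argument), and then do the same bookkeeping with $\sum_{v\mid p}[k_v:\bQ_p]=[k:\bQ]$ and the standard $\operatorname{Res}_{k/\bQ}$ identity for items (2)--(4). The only cosmetic difference is that you make explicit the existence of full, totally split primes (via Chebotarev) needed to attain $N_\Gamma$ and $D_\Gamma$, which the paper leaves implicit.
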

\begin{proof}
We write $Lie_{k_{v}}(G(k_{v}))$ for the Lie algebra $Lie(G(k_{v}))$
considered over $k_{v}$ and $Lie_{\bQ_{p}}(G(k_{v}))$ for the same
Lie algebra but considered over $\bQ_{p}$. As $G$ is absolutely
simple, $Lie_{k_{v}}(G(k_{v}))$ is simple. As $[k_{v}:\bQ_{p}]$
is finite, it is shown in \cite[\S 6.10]{bour98} that $Lie_{\bQ_{p}}(G(k_{v}))$
is also simple, so the claims of (\ref{lemitemRes:Lie algebra-1})
follow. This also shows that each valuation $v\notin S$ that lies
over $p$ corresponds to a simple components of $L_{p}^{\Gamma}$.
Thus, using the fundamental identity $[k:\bQ]=\sum_{v\in V^{k},v|p}[k_{v}:\bQ_{p}]$,
the further claims of (\ref{Lem lie algebra- simple comp}) also follow.
The assertion in (\ref{Lem lie algebra- restriction}) follow directly
from the definition of restriction of scalars and proved in \cite[2.1.2]{PR94}.

For (\ref{lemitemRes:Lie algebra-4}), first note that as $G$ is
absolutely simple, we know that for any valuation $v$, $dim_{k_{v}}(Lie_{k_{v}}(G(k_{v})))=dim(G)$.
So, 
\[
dim_{\bQ_{p}}(Lie_{\bQ_{p}}(G(k_{v})))=[k_{v}:\bQ_{p}]\cdot dim_{k_{v}}(Lie_{k_{v}}(G(k_{v})))=[k_{v}:\bQ_{p}]\cdot dim(G).
\]
Using again the identity $[k:\bQ]=\sum_{v\in V^{k},v|p}[k_{v}:\bQ_{p}]$,
we see that 
\[
D_{p}^{\Gamma}=dim(G)\cdot\sum_{v\notin S,v|p}[k_{v}:\bQ_{p}]\leq dim(G)\cdot[k:\bQ],
\]
with equality if and only if $p$ is full. The latter bound does not
depend on $p$ so $D_{\Gamma}=dim(G)\cdot[k:\bQ]$.\end{proof}
\begin{cor}
\label{cor:dimension and normal closures} Fix $\Gamma\in\cA$ . Then,
for all $\Lambda\in\cA_{\Gamma}$ we have 
\begin{enumerate}
\item \label{enu:normal closure}$[k_{\Gamma}:\bQ]=[k_{\Lambda}:\bQ]$,
and the normal closure of $k_{\Gamma}$ is equal to the normal closure
of $k_{\Lambda}$, 
\item \label{enu:dimension}$dim(G_{\Gamma})=dim(G_{\Lambda})$,
\item \label{enu:S}$S_{\Gamma}^{full}=S_{\Lambda}^{full}$,
\item \label{enu:restriction}For all full primes $p$, we have $Res_{k_{\Lambda}/\bQ}(G_{\Lambda})\cong Res_{k_{\Gamma}/\bQ}(G_{\Gamma})$
as algebraic groups over $\bQ_{p}$.
\end{enumerate}
\end{cor}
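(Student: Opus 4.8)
The plan is to derive Corollary~\ref{cor:dimension and normal closures} directly from Proposition~\ref{prop:lie iso SR} together with Lemma~\ref{lem:Lie algebra}. Fix $\Gamma\in\cA$ and let $\Lambda\in\cA_\Gamma$, so that $\widehat\Lambda\cong\widehat\Gamma$ and hence, by Proposition~\ref{prop:lie iso SR}, $L_p^\Gamma\cong L_p^\Lambda$ as $\bQ_p$-Lie algebras for every prime $p$. The whole point is that each of the four assertions is a statement about an invariant of $\Gamma$ that Lemma~\ref{lem:Lie algebra} expresses purely in terms of the family $\{L_p^\Gamma\}_p$; since these Lie algebras agree with those of $\Lambda$, the invariants must agree too.

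Concretely: for (\ref{enu:normal closure}), by Lemma~\ref{lem:Lie algebra}(\ref{Lem lie algebra- simple comp}) we have $[k_\Gamma:\bQ]=N_\Gamma=\max_p n_p^\Gamma$, and the right-hand side is manifestly determined by the isomorphism types of the $L_p^\Gamma$ (it is the maximal number of simple components), so $[k_\Gamma:\bQ]=[k_\Lambda:\bQ]$. For the normal closure, recall from the same item that $n_p^\Gamma=N_\Gamma$ precisely when $p$ is full for $\Gamma$ \emph{and} splits completely in $k_\Gamma$; thus the set of primes that split completely in $k_\Gamma$ and are full for $\Gamma$ equals the corresponding set for $\Lambda$. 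I would first establish (\ref{enu:S}) — that $S_\Gamma^{full}=S_\Lambda^{full}$ — using Lemma~\ref{lem:Lie algebra}(\ref{lemitemRes:Lie algebra-4}): $p$ is full for $\Gamma$ iff $D_p^\Gamma=D_\Gamma=\max_q D_q^\Gamma$, and $D_p^\Gamma=\dim_{\bQ_p}L_p^\Gamma$ is an isomorphism invariant of $L_p^\Gamma$, so fullness is detected by the $L_p$'s alone. Once (\ref{enu:S}) is known, the two sets ``full and totally split'' agree, hence so do the sets $Spl(k_\Gamma)$ and $Spl(k_\Lambda)$ up to the finitely many primes that ramify in $k_\Gamma$ or $k_\Lambda$ or lie below some valuation in $S_\Gamma$ or $S_\Lambda$; that is a finite symmetric difference, so Lemma~\ref{Lemma split primes} gives that $k_\Gamma$ and $k_\Lambda$ have the same normal closure, completing (\ref{enu:normal closure}).

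For (\ref{enu:dimension}), Lemma~\ref{lem:Lie algebra}(\ref{lemitemRes:Lie algebra-4}) gives $D_\Gamma=\dim(G_\Gamma)\cdot[k_\Gamma:\bQ]$ and $D_\Gamma=\max_p D_p^\Gamma$ is an invariant of $\{L_p^\Gamma\}_p$; combined with $[k_\Gamma:\bQ]=[k_\Lambda:\bQ]$ from (\ref{enu:normal closure}) this yields $\dim(G_\Gamma)=\dim(G_\Lambda)$. For (\ref{enu:restriction}), pick any full prime $p$ (the same for both groups, by (\ref{enu:S})); by Lemma~\ref{lem:Lie algebra}(\ref{Lem lie algebra- restriction}) the $\bQ_p$-Lie algebra of $\mathrm{Res}_{k_\Gamma/\bQ}(G_\Gamma)(\bQ_p)$ is naturally isomorphic to $L_p^\Gamma$, and likewise for $\Lambda$; since $L_p^\Gamma\cong L_p^\Lambda$, the two restriction-of-scalars groups have isomorphic Lie algebras over $\bQ_p$. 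To upgrade an isomorphism of Lie algebras to an isomorphism of the algebraic groups themselves, I would invoke the fact that both are simply connected semisimple $\bQ_p$-groups (simple connectedness of $G_\Gamma$ passes to $\mathrm{Res}_{k_\Gamma/\bQ}G_\Gamma$), and a simply connected semisimple algebraic group over a field of characteristic zero is determined up to isomorphism by its Lie algebra.

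The only genuinely delicate point is this last passage from Lie algebras to algebraic groups in (\ref{enu:restriction}) — one has to be careful about the distinction between the abstract $\bQ_p$-Lie algebra and its structure as the Lie algebra of a $\bQ_p$-group, and about which category of ``isomorphism'' is meant; but in characteristic zero the equivalence between simply connected semisimple groups and semisimple Lie algebras (over the given field) is standard and handles it. Everything else is a direct bookkeeping translation of Lemma~\ref{lem:Lie algebra} through the isomorphisms $L_p^\Gamma\cong L_p^\Lambda$, with the one external input being Lemma~\ref{Lemma split primes} for the normal-closure half of (\ref{enu:normal closure}).
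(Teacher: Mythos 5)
Your proposal is correct and follows essentially the same route as the paper: both deduce everything from Proposition~\ref{prop:lie iso SR} by reading off $n_p$, $D_p$ and their maxima from the $\bQ_p$-Lie algebras via Lemma~\ref{lem:Lie algebra}, use Lemma~\ref{Lemma split primes} (finite symmetric difference of split primes, controlled by the full primes) for the normal closure, and upgrade the Lie-algebra isomorphism to a $\bQ_p$-group isomorphism of the restrictions of scalars using simple connectedness. The only differences are cosmetic (ordering of the items and your slightly more explicit remark about why fullness and the component counts are isomorphism invariants of $L_p$).
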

\begin{proof}
We use the results and notation of Lemma \ref{lem:Lie algebra} freely
in this proof. From Proposition \ref{prop:lie iso SR} we know that
for all $p$, $n_{p}^{\Gamma}=n_{p}^{\Lambda}$ and $D_{p}^{\Gamma}=D_{p}^{\Lambda}$.
We immediately get 
\[
[k_{\Gamma}:\bQ]=N_{\Gamma}=N_{\Lambda}=[k_{\Lambda}:\bQ]
\]
 and likewise 
\[
[k_{\Gamma}:\bQ]\cdot dim(G_{\Gamma})=D_{\Gamma}=D_{\Lambda}=[k_{\Lambda}:\bQ]\cdot dim(G_{\Lambda})
\]
 so $dim(G_{\Gamma})=dim(G_{\Lambda})$. Furthermore, 
\[
S_{\Gamma}^{full}=\{p:D_{p}^{\Gamma}=D_{\Gamma}\}=\{p:D_{p}^{\Lambda}=D_{\Lambda}\}=S_{\Lambda}^{full}.
\]
We now show that $k_{\Gamma}$ and $k_{\Lambda}$ has the same normal
closure. Let $A$ be the set of split primes in $k_{\Gamma}$ and
$B$ be the set of split primes in $k_{\Lambda}$. Recall that a full
prime $p$ splits completely in $k_{\Gamma}$ if and only if $n_{p}^{\Gamma}=N_{\Gamma}$.
Clearly, $n_{p}^{\Gamma}=N_{\Gamma}$ if and only if $n_{p}^{\Lambda}=N_{\Lambda}$.
Thus, a \emph{full} prime $p$ splits completely in $k_{\Gamma}$
if and only if it splits completely in $k_{\Lambda}$. Therefore,
the symmetric difference of $A$ and $B$ is contained in the complement
of $S_{\Gamma}^{full}$, which is a finite set. By Lemma \ref{Lemma split primes},
$k_{\Gamma}$ and $k_{\Lambda}$ has the same normal closure. Finally,
for part (\ref{enu:restriction}), let $p$ be a full prime and note
that from Lemma \ref{lem:Lie algebra}.\ref{Lem lie algebra- restriction}
we have 
\[
Lie(Res_{k_{\Lambda}/\bQ}(G_{\Lambda})(\bQ_{p}))\cong L_{p}^{\Lambda}\cong L_{p}^{\Gamma}\cong Lie(Res_{k_{\Gamma}/\bQ}(G_{\Gamma})(\bQ_{p})).
\]
 As both groups are simply connected, this Lie algebra isomorphism
implies that the groups are isomorphic over $\bQ_{p}$, as claimed.
\end{proof}

\subsection{Proof of Theorem \ref{Main Theorem 1} assuming Proposition \ref{prop:lie iso SR}.}
\begin{proof}
Let us fix $\Gamma=(\Gamma,G,k,S)\in\cA$ and denote the elements
of $\cA_{\Gamma}$ by 
\[
\cA_{\Gamma}=\{(\Lambda_{i},G_{i},k_{i},S_{i})\}_{i\in I}.
\]
We advise the reader to recall the conventions and definitions from
Section \ref{sub:working-definition A}. For brevity, we say that
a subset of $\cA_{\Gamma}$ is CFIN if it is contained in finitely
many commensurability classes. Thus our aim is to show that $\cA_{\Gamma}$
itself is CFIN. By Corollary \ref{cor:dimension and normal closures}.\ref{enu:normal closure},
$\{k_{i}\}_{i\in I}$ is a finite set of fields as all of them have
the same normal closure. Thus we can divide $\cA_{\Gamma}$ into finitely
many subsets, according to their fields of definition. It is enough
to prove that each such subset is CFIN. 

Let $C=\{(\Lambda_{i},G_{i},k,S_{i})\}_{i\in I_{1}}$ be such a subset.
Part \ref{enu:S} of Corollary \ref{cor:dimension and normal closures},
implies that for any $i\in I_{1}$, $S_{\Gamma}^{full}=S_{i}^{full}$.
This imply that 
\[
S_{i}\subset\{v\in V^{k}:v|p\notin S_{\Gamma}^{full}\},\quad i\in I_{1}.
\]
Note that as $\Gamma$ is fixed throughout, $S_{\Gamma}^{full}$ is
a fixed set of rational primes, so $\{S_{i}\}_{i\in I_{1}}$ is a
finite set (of subsets of $V^{k}$). Thus, we can divide $C$ into
finitely many subsets, according to the sets of the corresponding
valuations. It is enough to prove that each such subset is CFIN . 

Let $D=\{(\Lambda_{i},G_{i},k,S)\}_{i\in I_{2}}$ be such a subset.
Corollary \ref{cor:dimension and normal closures}.\ref{enu:dimension}
shows that for any $i\in I_{2}$, $dim(G_{i})=dim(G)$. As there are
only finitely many semisimple groups of given dimension over $\bC$,
we deduce that the groups $G_{i}$ become isomorphic over the algebraic
closure to one of finitely many algebraic groups%
\footnote{In fact, one can easily see that \emph{all} of them become isomorphic
to $G(\bar{k})$ but we do not use this fact.%
}. We can divide $D$ into finitely many subsets, according to the
ambient groups. It is enough to prove that each such subset is CFIN. 

Let $E=\{(\Lambda_{i},G_{i},k,S)\}_{i\in I_{3}}$ be such a subset.
Corollary \ref{cor:dimension and normal closures}.\ref{enu:restriction}
implies that for any $i,j\in I_{3}$ we have $Res_{k/\bQ}(G_{i})(\bQ_{p})\cong Res_{k/\bQ}(G_{j})(\bQ_{p})$
for all $p\in S_{\Gamma}^{full}$. All but finitely many primes are
in $S_{\Gamma}^{full}$. Thus, all the groups in $\{Res_{k/\bQ}(G_{i})\}_{i\in I_{3}}$
agree locally in all but finitely many places. Using \cite{BS64},
this shows that there are finitely many $\bQ$-isomorphism classes
of $\bQ$-algebraic groups in $\{Res_{k/\bQ}(G_{i})\}_{i\in I_{3}}$.
Let $F:=\{(\Lambda_{i},G_{i},k,S)\}_{i\in I_{4}}\subset E$ such that
the elements of $\{Res_{k/\bQ}(G_{i})\}_{i\in I_{4}}$ belong to the
same $\bQ$-isomorphism class. We claim that all the elements of $\{\Lambda_{i}\}_{i\in I_{4}}$
belong to the same commensurability class. Indeed, by general properties
of the restriction of scalars functor, for any $i,j\in I_{4}$, the
isomorphism $Res_{k/\bQ}(G_{i})\cong Res_{k/\bQ}(G_{j})$ comes from
a unique $k$-isomorphism of $G_{i}$ with $G_{j}$ over a unique
automorphism of the field $k$. The reader can check (see \cite[Proposition 4.1]{PR94})
that the image of $\Lambda_{i}$ under such $k$-isomorphism and under
an automorphism of $k$ is a subgroup of $G_{j}(\bar{k})$ which is
commensurable to $\Lambda_{j}$. This shows that all the elements
of $F$ are contained in one commensurability class and thus $E$
is CFIN. By the reductions made above, this concludes the proof. $ $ 
\end{proof}

\subsection{Proof of Proposition \ref{prop:lie iso SR}}

As seen above, Proposition \ref{prop:lie iso SR} is the main ingredient
in the proof of Theorem \ref{Main Theorem 1}. The main idea of its
proof is that given $\Gamma\in\cA$, we will show that $L_{p}^{\Gamma}$
is the Lie algebra of a maximal $p$-adic analytic quotient of $\widehat{\Gamma}$.
For example, for $\Gamma=SL_{n}(\cO_{k})$ where $k$ is not a totally
imaginary number field, one has $\widehat{\Gamma}=\prod_{v\in V^{k}}SL_{n}(\cO_{v})$
and $\prod_{v|p}SL_{n}(\cO_{v})$ is a $p$-adic analytic quotient
of $\widehat{\Gamma}$ having $L_{p}^{\Gamma}$ as its Lie algebra.
The reader can check that it is also \emph{maximal} $p$-adic analytic
quotient of $\widehat{\Gamma}$ in the sense we will define momentarily.

The above example uses the congruence subgroup property, but the following
proof does not. The substitute is the fact that an element $\Gamma\in\cA$
satisfy the so-called Marguils' super-rigidity theorem \cite[Theorem 6, page 5]{Mar91}: 
\begin{thm}
{[}Margulis' super rigidity{]}\label{thm: superrigidity} Let $\Gamma\in\cA$
and $H$ be an algebraic group which is defined over a field $l$
with $char(l)=0$. Then for any homomorphism $\delta:\Gamma\ra H(l)$
there exist a finite index subgroup $\Gamma'<\Gamma$ such that $\delta|_{\Gamma'}$
factor as 
\[
\xymatrix{\Gamma'\ar[rrr]^{\delta|_{\Gamma'}}\ar@{^{(}->}[dr] &  &  & H(l)\\
 & G(k)\ar@{=}[r] & (Res_{k/\bQ}G)(\bQ)\ar[ur]^{\phi}
}
\]
 where $\phi$ is a (uniquely determined) $l$-morphism of algebraic
groups between $Res_{k/\bQ}G$ to the Zariski closure of $\delta(\Gamma')$. \end{thm}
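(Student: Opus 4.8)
The plan is to reproduce Margulis' ergodic-theoretic proof of super rigidity, in the boundary-map formalism of Furstenberg and Zimmer; this is necessarily a sketch of a long and deep argument rather than anything self-contained. After passing to a finite-index subgroup we may assume $\Gamma\subset G(k)$, and after replacing $H$ by the Zariski closure of $\delta(\Gamma)$ we may assume $\delta(\Gamma)$ is Zariski dense and (passing again to a finite-index subgroup of $\Gamma$) that $H$ is connected. Since $\delta(\Gamma)$ is finitely generated, $l$ may be replaced by a finitely generated subfield, and a specialization argument then reduces us to the case where $l$ is a local field, i.e. a finite extension of $\bR$, $\bC$, or $\bQ_p$. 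Finally, using that higher-rank $S$-arithmetic groups have finite abelianization (indeed satisfy the Normal Subgroup Theorem), one disposes of solvable $H$ and reduces to the case of $H$ semisimple adjoint.

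Let $\bG:=\prod_{v\in S}G(k_v)$; then $\Gamma$ is an irreducible lattice in $\bG$ and, by hypothesis, $\mathrm{rank}\,\bG=\sum_{v\in S}\mathrm{rank}_{k_v}G\ge 2$. Fix a minimal parabolic $P\le\bG$; the boundary $\bG/P$, with its $\bG$-quasi-invariant measure, is an amenable ergodic $\Gamma$-space. Amenability of the $\Gamma$-action yields a measurable $\Gamma$-equivariant map from $\bG/P$ to the space of probability measures on the (compact) flag variety $H(l)/Q$, $Q$ a minimal $l$-parabolic of $H$. Using that $\delta(\Gamma)$ is Zariski dense and is not relatively compact in $H(l)$ --- the relatively compact case being handled separately --- Furstenberg's lemma on stabilizers of measures forces these measures to be Dirac, so one obtains an honest measurable $\Gamma$-map $\psi\colon\bG/P\to H(l)/Q$.

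The higher-rank hypothesis now enters decisively. Exploiting the Mautner phenomenon and the ergodicity of actions of semisimple and unipotent subgroups of $\bG$ on $\bG/P$ and on products of its constituent pieces --- available precisely because $\mathrm{rank}\,\bG\ge 2$, this step genuinely failing in rank one --- one proves that $\psi$ is rational: its restriction to almost every fibre of a suitable projection is a morphism of $l$-varieties. Margulis' rationality theorem for measurable equivariant maps then upgrades this to the statement that, on a finite-index subgroup $\Gamma'\le\Gamma$, the homomorphism $\delta$ extends to an $l$-morphism of algebraic groups $\bG\to H$. Decomposing $\bG$ over the places of $S$ and recalling that $G(k)=(\mathrm{Res}_{k/\bQ}G)(\bQ)\hookrightarrow\bG$, this extension restricts to the desired $l$-morphism $\phi\colon\mathrm{Res}_{k/\bQ}G\to\overline{\delta(\Gamma')}$. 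Uniqueness of $\phi$ is immediate from the Borel density theorem, since $\Gamma'$ is Zariski dense in $\mathrm{Res}_{k/\bQ}G$ and two morphisms agreeing on a Zariski-dense set coincide.

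It remains to dispatch the relatively compact case: if $l$ is non-archimedean and $\delta(\Gamma)$ is bounded, the boundary map degenerates, and instead $\delta(\Gamma)$ fixes a point of the Bruhat--Tits building of $H$ over $l$; one then reduces, via finite generation and a change of local field, to a situation already covered, and still extracts $\phi$. The step I expect to be the main obstacle --- and the technical heart of the theorem --- is the \emph{rationality of the boundary map} $\psi$: passing from a merely measurable $\Gamma$-equivariant map to an algebraic morphism. This is exactly where $\mathrm{rank}\,\bG\ge 2$ is indispensable, and it rests on a delicate interplay between the ergodic theory of the $\bG$-action on (products of) boundaries and the algebraic geometry of flag varieties; by comparison, the reductions above and the final descent to $\mathrm{Res}_{k/\bQ}G$ are essentially formal bookkeeping.
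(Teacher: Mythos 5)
The paper does not reprove this statement at all: it is Margulis' theorem quoted from \cite[Theorem 6, page 5]{Mar91}, and the paper's entire ``proof'' consists of checking that any $\Gamma\in\cA$ (ambient group simply connected and absolutely almost simple, $S$-rank $\geq2$, $\Gamma$ commensurable with $G(\cO_{k,S})$) satisfies the hypotheses of that theorem. Your proposal therefore takes a genuinely different route: you attempt to re-derive superrigidity itself through the Furstenberg--Zimmer boundary-map machinery. As an outline of Margulis' \emph{local-field} superrigidity your sketch is broadly faithful (amenability giving a measurable $\Gamma$-map to probability measures on the flag variety, Furstenberg's lemma, rationality of the boundary map via higher-rank ergodicity, Borel density for uniqueness of $\phi$), and for the purposes of this paper nothing more than the hypothesis check is actually required.

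Judged as a proof of the statement as formulated, however, the two steps you dismiss as ``formal bookkeeping'' are precisely where the arithmetic, characteristic-zero version goes beyond local superrigidity, and your sketch has genuine gaps there. First, the boundary-map argument yields (for $l$ local, $H$ semisimple adjoint, $\delta(\Gamma)$ Zariski dense with unbounded image) a continuous extension $\prod_{v\in S}G(k_v)\to H(l)$; the statement here concerns an arbitrary field $l$ of characteristic $0$ and asserts an $l$-morphism of algebraic groups from $Res_{k/\bQ}G$ onto the Zariski closure of $\delta(\Gamma')$, with $G$ simply connected rather than adjoint. Passing from the former to the latter needs the trace-field/specialization argument (embedding the finitely generated field of matrix entries into local fields place by place, showing the representation is defined over a number field, and identifying it with a sum of Galois twists of the given $k$-structure), plus care with the center and with non-semisimple targets; this is real work, not a restriction to rational points. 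Second, the relatively compact case is not degenerate: the natural inclusion $SL_{3}(\ZZ)\hra SL_{3}(\bZ_{p})\subset SL_{3}(\bQ_{p})$ has bounded Zariski-dense image, yet the theorem still asserts an algebraic extension (the identity); bounded images are handled by integrality and adelic arguments, not because ``the boundary map degenerates and one reduces to a case already covered.'' So either fill in these reductions following \cite{Mar91}, or do what the paper does and simply verify that $\Gamma\in\cA$ meets the hypotheses of the cited theorem.
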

\begin{proof}
Elements of $\cA$ satisfy the conditions appearing in \cite[Theorem 6, page 5]{Mar91}
which states the above claim.
\end{proof}
Fix a prime number $p$ and let $\cN(\Gamma,p)$ be the set of normal
subgroups $N$ such that $\widehat{\Gamma}/N$ is a $p$-adic analytic
group. Note that any such $p$-adic analytic group is a finitely generated
group which can be realized as a compact open subgroup of some algebraic
group defined of $\bQ_{p}$. For any $N\in\cN(\Gamma,p$) we fix such
a realization and identify $\widehat{\Gamma}/N$ with this realization.
We call an element $N\in\cN(\Gamma,p)$ \emph{maximal} if for any
$M\in\cN(\Gamma,p)$, 
\[
dim(\widehat{\Gamma}/N)\geq dim(\widehat{\Gamma}/M)
\]
where $dim$ is the dimension as a $p$-adic analytic group (i.e.,
the dimension of the underlying analytic manifold). We denote the
set of maximal elements of $\cN(\Gamma,p)$ by $\cN_{0}(\Gamma,p)$.
Let $(\Gamma,G,k,S)\in\cA$ and $\pi:\widehat{\Gamma}\ra\bar{\Gamma}$
be the natural map between the profinite completion and the congruence
completion of $\Gamma$. The group $\bar{\Gamma}$ is an open compact
subgroup of the Adele group $G(\bA_{k,S})$. Let $\pi_{p}$ be the
projection from $G(\bA_{k,S})$ to $\prod_{v\notin S,v|p}G(k_{v})$.
\begin{prop}
\label{prop:maximal} Let $(\Gamma,G,k,S)\in\cA$, $N_{p}:=ker(\pi_{p}\circ\pi)$.
Then $N_{p}\in\cN_{0}(\Gamma,p)$. Moreover, for any $N\in\cN_{0}(\Gamma,p)$
there exist an isomorphism of Lie algebras between $Lie(\widehat{\Gamma}/N)$
and $Lie(\widehat{\Gamma}/N_{p})$. \end{prop}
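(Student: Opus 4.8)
The plan is to show two things: first, that $N_p$ lands in $\cN_0(\Gamma,p)$, i.e. that the congruence quotient $\prod_{v\notin S, v\mid p}G(k_v)$ (or rather the open compact subgroup of it that is the image of $\pi_p\circ\pi$) is a $p$-adic analytic group of \emph{maximal} dimension among all $p$-adic analytic quotients of $\widehat\Gamma$; and second, that this maximal dimension is achieved \emph{only} with the correct Lie algebra, so that any $N\in\cN_0(\Gamma,p)$ produces a quotient Lie-algebra-isomorphic to $Lie(\widehat\Gamma/N_p)=L_p^\Gamma$. The key input for both is Margulis super-rigidity (Theorem \ref{thm: superrigidity}): a $p$-adic analytic quotient $\widehat\Gamma/N$ comes with a map $\Gamma\to\widehat\Gamma\to\widehat\Gamma/N\hookrightarrow H(\bQ_p)$ for a $\bQ_p$-algebraic group $H$, and after passing to a finite-index subgroup $\Gamma'$ this factors through a $\bQ_p$-morphism $\phi\colon \Res_{k/\bQ}G\to \overline{\delta(\Gamma')}$. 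Since $\Res_{k/\bQ}G$ is semisimple and $\phi$ has Zariski-dense image, $\overline{\delta(\Gamma')}$ is a quotient of $\Res_{k/\bQ}G$, hence (up to isogeny) a product of a subset of the $\bQ_p$-simple factors of $\Res_{k/\bQ}(G)\times_\bQ \bQ_p \cong \prod_{v\mid p} \Res_{k_v/\bQ_p} G$. Its dimension is therefore at most $\dim_{\bQ_p}\prod_{v\mid p}\Res_{k_v/\bQ_p}G = \dim(G)\cdot\sum_{v\mid p}[k_v:\bQ_p]$. Because $\widehat\Gamma/N$ is commensurable (as a $p$-adic analytic group) with the closure of $\delta(\Gamma')$, we get $\dim(\widehat\Gamma/N)\le \dim(G)\cdot[k:\bQ_p]$-type bound; but I have to be careful about the valuations \emph{in} $S$ dividing $p$, so let me record that more carefully below.

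First I would fix notation: write $\bar\Gamma\subset G(\bA_{k,S})$ for the congruence completion, so $\bar\Gamma\cap\prod_{v\notin S}G(\cO_v)$ is open in $\prod_{v\notin S}G(\cO_v)$ by strong approximation, and $\pi_p\circ\pi$ has image an open compact subgroup $U_p$ of $\prod_{v\notin S,v\mid p}G(k_v)$. This $U_p$ is a $p$-adic analytic group with $Lie(U_p)=L_p^\Gamma$, so $N_p\in\cN(\Gamma,p)$ and $\dim(\widehat\Gamma/N_p)=D_p^\Gamma=\dim(G)\sum_{v\notin S,v\mid p}[k_v:\bQ_p]$. Now take an arbitrary $N\in\cN(\Gamma,p)$ and run the argument of the previous paragraph: restricting the composite $\Gamma\to\widehat\Gamma/N\hookrightarrow H(\bQ_p)$ to a finite-index $\Gamma'$, super-rigidity gives a $\bQ_p$-morphism $\phi$ from $\Res_{k/\bQ}G$ onto (a group isogenous to) $\overline{\delta(\Gamma')}$. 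The crucial point is that $\Gamma'$ still has dense image in $\widehat\Gamma/N$ (finite-index subgroups of $\Gamma$ have closure of finite index in $\widehat\Gamma$, hence surject onto $\widehat\Gamma/N$ up to finite index), so $\dim\overline{\delta(\Gamma')}=\dim(\widehat\Gamma/N)$. A quotient of $(\Res_{k/\bQ}G)\times_\bQ\bQ_p\cong\prod_{v\in V^k,v\mid p}\Res_{k_v/\bQ_p}(G)$ is, up to isogeny, a product over a subset $T$ of $\{v\mid p\}$ of the factors $\Res_{k_v/\bQ_p}(G)$; moreover the valuations $v\in S$ with $v\mid p$ \emph{cannot} appear in $T$, because on $G(\cO_{k,S})$ — hence on $\Gamma'$ — those places are "inverted" and the corresponding completion map $\Gamma'\to G(k_v)$ does not have relatively compact image, while $\widehat\Gamma/N$ is compact; so the composite to that factor must be trivial. (This uses that the projection $\Res_{k/\bQ}G\to\Res_{k_v/\bQ_p}G$ restricted to $\Gamma'$ is, up to the super-rigidity factorization, the tautological one.) Hence $T\subseteq\{v\notin S: v\mid p\}$ and $\dim(\widehat\Gamma/N)=\sum_{v\in T}\dim(G)[k_v:\bQ_p]\le D_p^\Gamma=\dim(\widehat\Gamma/N_p)$, proving $N_p\in\cN_0(\Gamma,p)$.

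For the "moreover" clause, take $N\in\cN_0(\Gamma,p)$. By the above, $\widehat\Gamma/N$ is isogenous to $\phi(\Res_{k/\bQ}G)\cong\prod_{v\in T}\Res_{k_v/\bQ_p}G$ with $T\subseteq\{v\notin S,v\mid p\}$, and maximality forces $\dim=D_p^\Gamma$, which can only happen if $T=\{v\notin S: v\mid p\}$, i.e. $\phi$ is an isogeny onto $\prod_{v\notin S,v\mid p}\Res_{k_v/\bQ_p}G$. An isogeny of algebraic groups over $\bQ_p$ induces an isomorphism of Lie algebras, and the open compact subgroup $\widehat\Gamma/N$ has the same Lie algebra as the ambient group; therefore $Lie(\widehat\Gamma/N)\cong Lie\big(\prod_{v\notin S,v\mid p}\Res_{k_v/\bQ_p}G\,(\bQ_p)\big)=L_p^\Gamma=Lie(\widehat\Gamma/N_p)$, as required. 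The main obstacle, and the step I would spend the most care on, is the bookkeeping in the middle paragraph: precisely identifying the Zariski closure $\overline{\delta(\Gamma')}$ as a quotient of $\Res_{k/\bQ}G$ determined by a subset of places, and proving that places in $S$ dividing $p$ are excluded because their local factor would force non-compactness (or using instead that $\pi_p\circ\pi$ already realizes the largest compact quotient supported at $p$). Everything else — super-rigidity, semisimplicity of $\Res_{k/\bQ}G$, isogeny $\Rightarrow$ Lie isomorphism, finite-index subgroups being dense — is standard and quotable from the references already cited.
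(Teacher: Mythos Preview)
Your proof is correct and follows essentially the same route as the paper: apply super-rigidity to factor any $p$-adic analytic quotient $\widehat\Gamma/N\hookrightarrow H(\bQ_p)$ through a surjective $\bQ_p$-morphism $\phi$ from $\Res_{k/\bQ}G\otimes\bQ_p\cong\prod_{v\mid p}\Res_{k_v/\bQ_p}G$, argue that the factors with $v\in S$ lie in $\ker\phi$, and read off both the dimension bound and (at equality) the Lie-algebra isomorphism. For the step you correctly flag as the crux --- killing the $v\in S$ factors --- the paper uses a device you might prefer to your compactness/density sketch: a short lemma showing that any $\bQ_p$-polynomial map from a group generated by one-dimensional unipotent subgroups into a compact target is constant, applied to $\prod_{v\in S,\,v\mid p}G(k_v)$ (which is so generated, having no compact factors by the standing assumption on $S$) mapping into the compact $\widehat\Gamma/N$; this sidesteps the need to invoke strong approximation for the density of $\Gamma'$ in those factors.
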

\begin{proof}
It is clear that $N_{p}\in\cN(\Gamma,p)$ and also the quotient $\widehat{\Gamma}/N_{p}$
is naturally contained in 
\[
\prod_{v|p,v\notin S}G(k_{v})\cong\prod_{v|p,v\notin S}(Res_{k_{v}/\bQ_{p}}G)(\bQ_{p})
\]
 and commensurable to 
\[
D:=\prod_{v|p,v\notin S}G(\cO_{v})\cong\prod_{v|p,v\notin S}(Res_{k_{v}/\bQ_{p}}G)(\bZ_{p}).
\]
These properties of the restriction of scalars functor are proved
in \cite[2.1.2]{PR94}. 

Now, in order to show that $N_{p}\in\cN_{0}(\Gamma,p)$, we need to
verify the maximality condition. As $\widehat{\Gamma}/N_{p}$ is commensurable
to $D$, they have the same dimension as $p$- adic analytic groups.
The group $D$ is open in $\prod_{v|p,v\notin S}G(k_{v})$, and its
dimension as a $p$-adic analytic group is equal to the dimension
over $\bQ_{p}$ of the group $\prod_{v|p,v\notin S}G(k_{v})$. The
latter is equal to $\sum_{v|p,v\notin S}[k_{v}:\bQ_{p}]\cdot dim(G)$.
Note that when $p\in S_{\Gamma}^{full}$, this dimension is equal
to $[k:\bQ]\cdot dim(G)$. 

We now use Margulis' super-rigidity (Theorem \ref{thm: superrigidity})
to show that the dimension of $\widehat{\Gamma}/N_{p}$ is the maximal
dimension of a $p$-adic analytic quotient of $\widehat{\Gamma}$
that may occur. Let $M\in\cN(\Gamma,p)$, i.e., $M\vartriangleleft\widehat{\Gamma}$
such that $\widehat{\Gamma}/M$ is a $p$-adic analytic group which
can be realized as an open compact subgroup of $H(\bQ_{p})$ where
$H$ is an algebraic group which is defined over $\bQ_{p}$; Moreover,
$\widehat{\Gamma}/M$ is and Zariski dense in $H(\bQ_{p})$. The openness
condition implies that the dimension of $H$ as an algebraic group
is equal to the dimension of $\widehat{\Gamma}/M$ as a $p$-adic
analytic subgroup, so our goal is to show that 
\begin{equation}
dim(H)\leq\sum_{v\notin S}[k_{v}:\bQ_{p}]\cdot dim(G).\label{eq:dimension goal}
\end{equation}
 By Theorem \ref{thm: superrigidity} the map 
\[
\Gamma\ra\widehat{\Gamma}\ra\widehat{\Gamma}/M\subset H(\bQ_{p})
\]
 factor, on a finite-index subgroup $\Gamma'$ of $\Gamma$, as 
\[
\xymatrix{\Gamma'\ar@{^{(}->}[d]\ar@{^{(}->}[r] & \widehat{\Gamma}\ar[rr] &  & \widehat{\Gamma}/M\subset H(\bQ_{p})\\
G(k)\ar@{=}[r] & Res_{k/\bQ}G(\bQ)\ar@{^{(}->}[r] & Res_{k/\bQ}G(\bQ_{p})\ar@{>>}[ur]^{\phi}
}
\]
 where $\phi:Res_{k/\bQ}(G)\ra H$ is a surjective $\bQ_{p}$-morphism
of algebraic groups. Note that for $p\in S_{\Gamma}^{full}$, the
desired inequality in (\ref{eq:dimension goal}) follows as the surjectivity
of $\phi$ yields that 
\[
dim(H)\leq dim(Res_{k/\bQ}(G))=[k:\bQ]\cdot dim(G)=dim(\widehat{\Gamma}/N_{p}).
\]
 For the general case, note that 

\begin{equation}
(Res_{k/\bQ}G)(\bQ_{p})\cong\prod_{v|p}(Res_{k_{v}/\bQ_{p}}G)(\bQ_{p})\cong\prod_{v|p}G(k_{v})\label{eq:Qp points}
\end{equation}
and we conclude by showing that $\phi$ induces a surjective map from
\[
\prod_{v|p,v\notin S}(Res_{k_{v}/\bQ_{p}}G)\rightarrow H,
\]
which gives the desired inequality (\ref{eq:dimension goal}) as above. 

Let $C$ be the closure of $\Gamma'\cap G(\cO_{k,S})$ in (\ref{eq:Qp points}).
We claim that $C=C_{1}\times C_{2}$ where 
\[
C_{1}<_{fi}\prod_{v|p,v\notin S}G(\cO_{v})=\prod_{v|p,v\notin S}(Res_{k_{v}/\bQ_{p}}G)(\bZ_{p})
\]
\[
C_{2}=\prod_{v|p,v\in S}G(k_{v})=\prod_{v|p,v\in S}(Res_{k_{v}/\bQ_{p}}G)(\bQ_{p})
\]

Indeed, by our assumption on the $S$-rank of $G$, $G$ satisfy the
strong approximation property w.r.t. $S$ (see \cite[Theorem 7.12 and Proposition 7.2(2)]{PR94})
which assert the above. Note that $C_{1}$ is compact and $C_{2}$
is an affine semisimple algebraic groups without any compact factor,
by the assumption made on $S$ after Lemma \ref{lem:finiteindex free of compacts}.
The map $\phi$ induces a map from $C$ to the compact group $\widehat{\Gamma}/M$,
and the following Lemma shows that $\{e\}\times C_{2}$ is mapped
to the identity of the compact group $\widehat{\Gamma}/M$.
\begin{lem}
\label{lem: map to compact-1} Let L be a linear algebraic group such
that there exist a finite collection of one-dimensional unipotent
subgroups $\{U_{i}\}_{i=1}^{l}$ with 
\[
L(\bQ_{p})=U_{1}(\bQ_{p})\cdots U_{l}(\bQ_{p}).
\]
Then, any polynomial map $\varphi$ from $G(\bQ_{p})$ to a compact
variety is a constant map.\end{lem}
\begin{proof}
{[}Proof of Lemma \ref{lem: map to compact-1}{]} Using the ultrametric
on $\bQ_{p}$ one can easily show that any polynomial $f(x)$ with
coefficients in $\bQ_{p}$ is either constant or unbounded. This imply
that $\varphi:=\varphi|_{U_{i}(\bQ_{p})}$ is a constant map for $i=1,\dots,l$.
As all the maps $\varphi_{i}$ agree on the identity, they agree everywhere.
Finally, since $L(\bQ_{p})=U_{1}(\bQ_{p})\cdots U_{l}(\bQ_{p})$,
$\varphi$ is the constant map.
\end{proof}
By \cite[Thm 27.5(e) \& Propositon 7.5(b)]{Hump75} $\{e\}\times C_{2}$
satisfy the conditions of Lemma \ref{lem: map to compact-1} with
$L=C_{2}$ and the map $\phi$ (which is a $\bQ_{p}$-polynomial map).
Thus $\{e\}\times C_{2}$ is in the kernel of $\phi$. Dividing by
$\{e\}\times C_{2}$, $\phi$ induces a surjective morphism 
\[
\tilde{\phi}:\prod_{v|p,v\notin S}Res_{k_{v}/\bQ_{p}}G\ra H
\]
 so $dim(H)$ cannot exceed 
\[
dim_{\bQ_{p}}(\prod_{v|p,v\notin S}Res_{k_{v}/\bQ_{p}}G)=\sum_{v\notin S}[k_{v}:\bQ_{p}]\cdot dim(G).
\]
 as desired. 

The above shows that taking $M=N$ for some $N\in\cN_{0}(\Gamma,p)$
we get a surjective morphism between $\tilde{\phi}:\prod_{v|p,v\notin S}Res_{k_{v}/\bQ_{p}}G\ra H$
and from the maximality condition, $H$ has the same dimension as
$Res_{k_{v}/\bQ_{p}}G$. Thus $\tilde{\phi}$ induces an isomorphism
between $Lie(Res_{k_{v}/\bQ_{p}}G)=Lie(\widehat{\Gamma}/N_{p})$ and
$Lie(H)=Lie(\widehat{\Gamma}/N)$, as claimed.
\end{proof}
We now finally prove Proposition \ref{prop:lie iso SR}:
\begin{proof}
Let $\Phi:\widehat{\Gamma}\ra\widehat{\Lambda}$ be a given isomorphism,
$p$ a prime number and $N\in\cN_{0}(\Gamma,p)$. Note that $L_{p}^{\Gamma}=Lie(\widehat{\Gamma}/N_{p})$
and therefore by Proposition \ref{prop:maximal} $L_{p}^{\Gamma}=Lie(\widehat{\Gamma}/N)$
for any $N\in\cN_{0}(\Gamma,p)$. The reader can easily verify that
assigning to each subgroup of $\widehat{\Gamma}$ its image by $\Phi$,
we get a bijection between $\cN(\Gamma,p)$ and $\cN(\Lambda,p)$
which restricts to a bijection between $\cN_{0}(\Gamma,p)$ and $\cN_{0}(\Lambda,p)$.
Moreover, it is clear that for any $N\vartriangleleft\widehat{\Gamma},$
we have $\widehat{\Gamma}/N\cong\widehat{\Lambda}/\Phi(N)$. Therefore
\[
L_{p}^{\Gamma}=Lie_{\bQ_{p}}(\widehat{\Gamma}/N)\cong Lie_{\bQ_{p}}(\widehat{\Lambda}/\Phi(N))=L_{p}^{\Lambda}.
\]

\end{proof}

\section{Examples\label{sec:Examples}}

In the course of the proof of Theorem \ref{Main Theorem 1} we deduced
from $ $$\widehat{\Gamma}\cong\widehat{\Lambda}$ the following facts:
the groups $G_{\Gamma}$ and $G_{\Lambda}$ are forms of each other,
the fields $F_{\Gamma}$ and $F_{\Lambda}$ have the same normal closure
and the sets $S_{\Gamma}$ and $S_{\Lambda}$ have the same set of
full primes.

The purpose of this section is to demonstrate that all the steps of
the proof of Theorem \ref{Main Theorem 1} are reflected in non-trivial
examples. This is done by giving explicit examples of non-isomorphic
arithmetic groups which are profinitely isomorphic. The examples in
\ref{sub:exa_Real_forms} and \ref{sub:diff S's} exhibit an explicit
family of groups for which, in terms of \cite{GZ11}, have unbounded
genus, hence give an answer to problem III of \cite{GZ11}.

\subsection{Real forms, Profinite properties and Kahzdan's property (T)\label{sub:exa_Real_forms}}

A property $\cP$ of finitely generated residually finite groups is
called a \emph{profinite property} if the following is satisfied:
if $\Gamma_{1}$ and $\Gamma_{2}$ are such groups with $\widehat{\Gamma_{1}}\cong\widehat{\Gamma_{2}}$
then $\Gamma_{1}$ has $\cP$ if and only if $\Gamma_{2}$ has $\cP$.
See \cite{Aka10} and the references within for various interesting
profinite properties and various non-profinite properties. Kassabov
showed that that property $(\tau)$ is not a profinite property, and
he asked whether Kazhdan property (T) is a profinite property. In
\cite{Aka10}, it is shown that Kazhdan property (T) is not a profinite
property. Explicitly, the following is proved: 
\begin{thm}
\label{thm:main theorem} Let $D$ be a positive square-free integer,
$k:=\bQ(\sqrt{D})$ and $\cO_{k}$ its ring of integers. Fix an integer
$n>7$ and let $\Gamma=Spin(1,n)(\cO_{k})$ and $\Lambda=Spin(5,n-4)(\cO_{k})$.
Then, there exist finite-index subgroups $\Gamma_{0}<\Gamma$ and
$\Lambda_{0}<\Lambda$ such that the profinite completion of $\Gamma_{0}$
is isomorphic to the profinite completion of $\Lambda_{0}$ while
$\Lambda_{0}$ admits property $(T)$ and $\Gamma_{0}$ does not.
Therefore, Kazhdan property (T) is not profinite. 
\end{thm}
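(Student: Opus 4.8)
The statement to be proved, Theorem \ref{thm:main theorem}, asserts the existence of finite-index subgroups $\Gamma_0 < \Gamma = Spin(1,n)(\cO_k)$ and $\Lambda_0 < \Lambda = Spin(5,n-4)(\cO_k)$ with $\widehat{\Gamma_0} \cong \widehat{\Lambda_0}$, yet $\Lambda_0$ has property (T) while $\Gamma_0$ does not. Since this theorem is attributed to \cite{Aka10} and is only recalled here as motivation, the proof in the present paper would consist of citing \cite{Aka10}; but let me sketch how one would actually establish it. The plan is to first observe that the property-(T) assertion is the easy half: $Spin(5, n-4)$ has $k_v$-rank $\geq 2$ at the real places (since $\min(5, n-4) \geq 2$ for $n > 7$, in fact $n-4 \geq 4$), so by Kazhdan's theorem and the fact that $k = \bQ(\sqrt D)$ is real quadratic (so both Archimedean places contribute), the lattice $\Lambda$ — and hence any finite-index subgroup $\Lambda_0$ — has property (T). By contrast $Spin(1,n)$ has real rank $1$ at each Archimedean place; $SO(1,n)$ has the Haagerup property, so $\Gamma$ and all its finite-index subgroups fail (T). Thus the content is entirely in producing the profinite isomorphism $\widehat{\Gamma_0} \cong \widehat{\Lambda_0}$.

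The key point is that $Spin(1,n)$ and $Spin(5, n-4)$ are $k$-forms of the same absolutely simple simply-connected group $Spin_{n+1}$ (type $B$ or $D$ depending on parity of $n+1$), corresponding to quadratic forms $q_1 = \langle 1 \rangle \perp (-1)\cdot\langle 1,\dots,1\rangle$ (signature $(1,n)$) and $q_2$ of signature $(5, n-4)$ over $k$ — chosen so that $q_1$ and $q_2$ have the \emph{same discriminant and the same Hasse--Witt invariants at every non-Archimedean place of $k$}, differing only at the two real places of $k$. Such a choice is possible: one picks $q_2$ in the same genus as $q_1$ over all finite places, using the freedom in choosing the entries and the classification of quadratic forms over local fields, and the fact that the two real places can be adjusted independently once $n$ is large enough (the $n > 7$ hypothesis ensures enough room to realize signature $(5, n-4)$ with the prescribed local invariants — one needs $n - 4 \geq 1$ and also enough variables that the finite-place invariants do not over-constrain). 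Then $Spin(q_1)$ and $Spin(q_2)$ become isomorphic over $k_v$ for every finite $v$, hence the congruence completions $\prod_{v \in V_f^k} Spin(q_i)(\cO_v)$ agree (after possibly passing to a principal congruence subgroup to kill the finitely many primes of bad reduction, which is where $\Gamma_0, \Lambda_0$ enter). Finally, invoking the congruence subgroup property for $Spin(q_i)(\cO_k)$ — valid here because the $S$-rank (here $S = V_\infty^k$) is $\geq 2$, since even $Spin(1,n)$ contributes rank $1$ at \emph{each} of the two real places of $k$, totalling $2$ — one gets $\widehat{\Gamma_0} = \overline{\Gamma_0}$ and $\widehat{\Lambda_0} = \overline{\Lambda_0}$ (congruence kernels being trivial or central and finite, which one arranges to vanish on the chosen subgroups), so the isomorphism of congruence completions upgrades to $\widehat{\Gamma_0} \cong \widehat{\Lambda_0}$.

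The steps, in order, are: (i) construct the two quadratic forms $q_1, q_2$ over $k = \bQ(\sqrt D)$ of the required signatures with identical local invariants away from the Archimedean places, citing Hasse--Minkowski / the product formula for Hasse invariants to confirm existence; (ii) deduce that $G_1 := Spin(q_1)$ and $G_2 := Spin(q_2)$ are $k_v$-isomorphic for all $v \in V_f^k$; (iii) invoke strong approximation and the structure of the congruence completion (as in \ref{sub:CSP}) to identify $\overline{Spin(q_i)(\cO_k)}$ with $\prod_{v \in V_f^k} Spin(q_i)(\cO_v)$, then pass to finite-index (congruence) subgroups $\Gamma_0, \Lambda_0$ on which the finitely many discrepant primes are trivialized, obtaining $\overline{\Gamma_0} \cong \overline{\Lambda_0}$; (iv) apply the congruence subgroup property (Serre's conjecture is a theorem in this rank-$\geq 2$ spin case, cf. \cite{PR08}) to conclude $\widehat{\Gamma_0} \cong \overline{\Gamma_0}$ and likewise for $\Lambda_0$, hence $\widehat{\Gamma_0} \cong \widehat{\Lambda_0}$; (v) record the representation-theoretic facts: $\Lambda_0$ has (T) because $Spin(5, n-4)(k_v)$ has higher rank at each real place, while $\Gamma_0 \leq SO(1,n)$-type lattice fails (T) by the Haagerup property of rank-one real Lie groups of this type.

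\textbf{Main obstacle.} The delicate step is (i)–(iii) combined: one must choose $q_2$ of signature exactly $(5, n-4)$ matching $q_1$ at \emph{all} finite places simultaneously, which by the Hasse--Minkowski local-global principle is constrained by the product formula $\prod_v \epsilon_v(q_i) = 1$ over \emph{all} places including Archimedean — changing the real signature from $(1,n)$ to $(5, n-4)$ shifts the Archimedean Hasse invariants, and one must verify this shift is consistent with keeping all finite invariants fixed, which is exactly why $n$ must be large enough (the hypothesis $n > 7$) and why square-freeness of $D$ and the specific arithmetic of $k = \bQ(\sqrt D)$ matter; and then one must track precisely which primes have bad reduction for the $Spin$ group schemes so as to pin down the explicit finite-index subgroups $\Gamma_0, \Lambda_0$ for which the congruence completions literally coincide rather than merely being commensurable. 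Handling the congruence kernel (ensuring it does not obstruct the final isomorphism of full profinite completions) is a further technical point, resolved by the known metaplectic-kernel computations for spin groups.
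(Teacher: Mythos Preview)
Your proposal is correct and matches the approach of \cite{Aka10}; the present paper gives no proof of this theorem, only the citation, which you rightly observe.

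One simplification worth noting: your ``main obstacle'' is less of an obstacle than you suggest. Taking $q_1 = \langle 1, -1, \ldots, -1\rangle$ (one $+1$, $n$ copies of $-1$) and $q_2 = \langle 1,1,1,1,1,-1,\ldots,-1\rangle$ (five $+1$'s, $n-4$ copies of $-1$) as the \emph{standard} diagonal forms over $k$, one checks directly that both have discriminant $(-1)^n$ and that their Hasse--Witt invariants satisfy
\[
\epsilon_v(q_1)\,\epsilon_v(q_2)^{-1} = (-1,-1)_v^{\binom{n}{2}-\binom{n-4}{2}} = (-1,-1)_v^{\,4n-10} = 1
\]
at \emph{every} place $v$, since $4n-10$ is even. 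Hence $q_1 \cong q_2$ over every non-Archimedean $k_v$ automatically; no delicate global choice of $q_2$ is required, and the product formula imposes no further constraint. The hypothesis $n>7$ is therefore not about ``enough room for local invariants'' --- it is rather to ensure that $Spin(5,n-4)$ is genuinely of higher rank (so that property~(T) holds) and to steer clear of low-dimensional exceptional isogenies (notably triality at $n=7$, where $Spin_8$ is type $D_4$). The positivity of $D$ is what guarantees two real places and hence $S$-rank $\geq 2$ for $Spin(1,n)$, so that CSP applies; square-freeness is just to make $k$ a genuine quadratic field.
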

In particular, there exist non-isomorphic arithmetic groups with isomorphic
profinite completions. Note that $Spin(1,n)(\cO_{k})$ (resp. $Spin(5,n-4)(\cO_{k})$)
are central extensions of irreducible lattices in $H_{1}:=SO(1,n)\times SO(1,n)$
(resp.$H_{2}:=SO(5,n-4)\times SO(5,n-4)$). We can therefore deduce
that $H_{1}$ and $H_{2}$ have lattices $\Gamma_{1}$ and $\Lambda_{1}$
resp. with isomorphic profinite completions. As $\bR-rank(H_{1})=2$
while $\bR-rank(H_{2})=10$ we see that the rank of the ambient Lie
group is also not a profinite property (in contrast to well know rigidity
results of Raghunathan and Margulis which showed the ambient algebraic
group of isomorphic lattices have the rank).

Note that $G_{\Gamma}=Spin(1,n)$ and $G_{\Lambda}=Spin(5,n-4)$ are
both forms of the complex lie group $Spin(n+1)$. All the other forms
are the groups $Spin(1+4k,n-4k)$ for suitable $k$. Choosing suitable
$n$ and $k$'s the same proof show that one can construct any given
number of non-isomorphic arithmetic groups with isomorphic profinite
completion. Thus this family of groups has unbounded genus so this
give a proof of Theorem \ref{thm:Unbounded genus}. In fact, one may
show that for any split $G$ of high rank, any non-trivial element
of $H^{1}(\bR,G)$ can give rise to a pair of non-isomorphic arithmetic
group with isomorphic profinite completion.

\subsection{Arithmetic equivalence and Adelic equivalence of fields\label{sub:Arithmetic-equivalence}}

Let $k$ be a number field, $\zeta_{k}$ its Dedekind zeta function
and $\bA_{k}$ its Adele ring. Recall also that $V^{k}$ is the set
of all valuations on $k$. We define two closely related equivalence
relations on number fields : $k$ and $l$ are said to be \emph{arithmetic
equivalent} if $\zeta_{k}=\zeta_{l}$, i.e., if their Dedekind zeta
functions are equal. Similarly, $k$ and $l$ are said to be \emph{Adelic
equivalent} if $\bA_{k}\cong\bA_{l}$, i.e., if their Adele rings
are isomorphic. The following proposition shows a strong relation
between these two notions: 
\begin{prop}
\label{prop:Adelic and Arith} 
\begin{enumerate}
\item Two number fields $k$ and $l$ are arithmetic equivalent if and only
if there exist $S\subset V^{k},T\subset V^{l}$ and a bijection $\phi:V^{k}\setminus S\lra V^{l}\setminus T$
such that for all $v\in V^{k}\setminus S$ there exist isomorphism
$k_{v}\cong l_{\phi(v)}$. 
\item Two number fields $k$ and $l$ are Adelic equivalent if and only
if there exist a bijection $\phi:V^{k}\ra V^{l}$ such that for all
$v\in V^{k}$ there exist isomorphism $k_{v}\cong l_{\phi(v)}$. 
\end{enumerate}
\end{prop}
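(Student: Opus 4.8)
The plan is to prove both statements by reducing them to the classical characterization of arithmetic equivalence via conjugacy of decomposition groups in a common Galois closure. Let $M$ be a number field containing Galois closures of both $k$ and $l$, with $\Gamma_{M/k}=\mathrm{Gal}(M/k)$ and $\Gamma_{M/l}=\mathrm{Gal}(M/l)$ viewed as subgroups of $G=\mathrm{Gal}(M/\bQ)$. The key input (Gassmann's theorem) is that $\zeta_k=\zeta_l$ if and only if the two subgroups are ``Gassmann equivalent'', i.e.\ meet every conjugacy class of $G$ in the same number of elements; equivalently, the permutation representations of $G$ on $G/\Gamma_{M/k}$ and $G/\Gamma_{M/l}$ are isomorphic. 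The point is that for all but finitely many rational primes $p$ (the unramified ones in $M$), the Frobenius conjugacy class controls the splitting type in $k$ and in $l$, and the multiset of local degrees $\{[k_v:\bQ_p]:v\mid p\}$ is exactly the cycle type of a Frobenius element acting on $G/\Gamma_{M/k}$.

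For part (1): I would first show the ``if'' direction. Given $S,T$ and a bijection $\phi:V^k\setminus S\to V^l\setminus T$ with $k_v\cong l_{\phi(v)}$, one has, for every rational prime $p$ not lying below $S\cup T$, an equality of the multisets of local degrees over $p$ in $k$ and in $l$; together with the identity $\sum_{v\mid p}[k_v:\bQ_p]=[k:\bQ]$ this forces $[k:\bQ]=[l:\bQ]$ (using that $S,T$ are finite, so cofinitely many $p$ are ``full''), and then the equality of all but finitely many Euler factors of $\zeta_k$ and $\zeta_l$. Since a Dirichlet series with an Euler product is determined by cofinitely many of its Euler factors (the completed zeta functions differ by a finite product of rational functions in $p^{-s}$, and analytic continuation plus the functional equation, or simply the Gassmann-style argument below, pins down the rest), we get $\zeta_k=\zeta_l$. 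For the ``only if'' direction, assume $\zeta_k=\zeta_l$; by Gassmann's theorem the permutation $G$-sets $G/\Gamma_{M/k}$ and $G/\Gamma_{M/l}$ are isomorphic. Fix such an isomorphism $\psi$. For each unramified prime $p$ choose a Frobenius $\sigma_p\in G$; the orbits of $\langle\sigma_p\rangle$ on $G/\Gamma_{M/k}$ correspond bijectively (via $\psi$, which intertwines the $\sigma_p$-actions) to those on $G/\Gamma_{M/l}$, and an orbit of size $f$ on the $k$-side corresponds to a place $v\mid p$ of $k$ with $[k_v:\bQ_p]=f$ and residue field $\bF_{p^f}$, hence $k_v\cong l_{w}$ for the matching place $w\mid p$ on the $l$-side. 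Letting $S$ (resp.\ $T$) be the places of $k$ (resp.\ $l$) above the finitely many ramified primes, and assembling these local bijections prime-by-prime, yields the desired $\phi:V^k\setminus S\to V^l\setminus T$.

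For part (2): the ``if'' direction is immediate, since a bijection $\phi:V^k\to V^l$ with $k_v\cong l_{\phi(v)}$ induces an isomorphism of restricted products $\bA_k=\prod^*_v k_v\to\prod^*_w l_w=\bA_l$ (one must check the isomorphisms $k_v\cong l_{\phi(v)}$ can be chosen to carry $\cO_v$ to $\cO_{\phi(v)}$ for all but finitely many $v$, which is automatic: a topological field isomorphism of local fields is a homeomorphism hence preserves rings of integers, and at the finitely many Archimedean places there is no integrality condition). For the ``only if'' direction, suppose $\Theta:\bA_k\xrightarrow{\sim}\bA_l$ as topological rings. The Archimedean part $k\otimes_\bQ\bR$ is recovered as the maximal connected subring (or via the connected component of the unit group), so $r_1,r_2$ agree for $k$ and $l$; this already forces $[k:\bQ]=[l:\bQ]$. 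The finite part $\bA_{k,\infty}$ (the ring of finite adeles) is then identified, and its maximal compact subring is $\widehat{\cO_k}=\prod_v\cO_v$; the local factors $k_v$ are recovered as the localizations at the minimal nonzero primes of $\widehat{\cO_k}$ (equivalently, the indecomposable factors after completing), and $\Theta$ permutes them, giving the bijection $\phi:V_f^k\to V_f^l$ with $k_v\cong l_{\phi(v)}$; combined with the Archimedean matching (any bijection $V_\infty^k\to V_\infty^l$ respecting $\bR$ vs.\ $\bC$, which exists since the counts agree) we obtain $\phi$ on all of $V^k$. Note in particular that Adelic equivalence implies arithmetic equivalence but is strictly stronger, as it forces the numbers of real and complex places to match individually.

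The main obstacle is the ``only if'' direction of part (1): extracting an \emph{exact} (not just cofinite) bijection of the relevant local data from the mere equality of zeta functions. The honest route is through Gassmann equivalence, so the real work is (a) citing/using Gassmann's theorem that $\zeta_k=\zeta_l$ is equivalent to the permutation-representation equality on cosets in a common Galois closure, and (b) verifying that this combinatorial equality, read through Frobenius elements, translates precisely into the place-by-place local isomorphisms — including at the finitely many ramified primes, where one simply throws those places into $S$ and $T$ and makes no claim. The analogous point for part (2) — that a topological-ring isomorphism of adele rings must permute the local factors — is easier but still requires the purely topological recovery of the factors $k_v$ from $\bA_k$, which I would do via the decomposition of $\widehat{\cO_k}$ into local rings of integers and the observation that a continuous ring isomorphism is in particular a homeomorphism preserving these distinguished compact open subrings.
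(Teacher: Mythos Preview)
The paper does not actually prove this proposition; it cites Klingen's book. Your Gassmann--Perlis approach is the standard one and is essentially correct, but there is one slip to fix.

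In the ``only if'' direction of (1) you write that Gassmann's theorem yields an isomorphism of the $G$-\emph{sets} $G/\Gamma_{M/k}$ and $G/\Gamma_{M/l}$, and then use a $G$-equivariant bijection $\psi$ to match Frobenius orbits. This is too strong: an isomorphism of $G$-sets would force $\Gamma_{M/k}$ and $\Gamma_{M/l}$ to be conjugate in $G$, hence $k\cong l$, which is exactly what fails in the interesting examples. Gassmann equivalence only gives an isomorphism of permutation \emph{representations} (equivalently, every $g\in G$ has the same cycle type on the two coset spaces). Fortunately that weaker statement is precisely what you need: applied to a Frobenius $\sigma_p$ at an unramified prime $p$, it says the multisets $\{[k_v:\bQ_p]:v\mid p\}$ and $\{[l_w:\bQ_p]:w\mid p\}$ coincide, and since unramified extensions of $\bQ_p$ are determined by their degree, you get a bijection of places over $p$ with $k_v\cong l_{\phi(v)}$. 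Assemble these prime by prime and throw the ramified places into $S$ and $T$. So the argument repairs immediately once you drop the nonexistent $G$-equivariant $\psi$ and argue via cycle types instead.

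The rest is fine. For the ``if'' direction of (1), the honest route is the one you parenthetically mention: agreement of splitting types at a density-one set of primes determines, via Chebotarev, the permutation character on both coset spaces, hence Gassmann equivalence, hence $\zeta_k=\zeta_l$ exactly; the purely analytic justification is less direct. Your treatment of part (2) via the topological-ring recovery of local factors from $\bA_k$ is correct and is essentially what is in Klingen.
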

\begin{proof}
See \cite[page 235 iv) and Theorem 2.3]{Klin98}.
\end{proof}
One clearly see that being Adelic equivalent is a stronger condition. 
\begin{thm}
There exist two non-isomorphic (and not totally imginary) number fields
$k$ and $l$ which are Adelic equivalent. Furthermore, they have
the same degree, the same set of splitting primes and the same normal
closure . \end{thm}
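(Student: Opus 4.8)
The plan is to produce the pair of fields explicitly from the classical source of arithmetic equivalence, namely Gassmann's construction, and then to upgrade it to adelic equivalence with the required extra properties. First I would recall that a Gassmann triple $(G, H_1, H_2)$ --- a finite group $G$ with subgroups $H_1, H_2$ that are not conjugate but meet every conjugacy class of $G$ in the same number of elements --- yields, via a number field $M$ with $\mathrm{Gal}(M/\bQ)\cong G$, two fixed fields $k = M^{H_1}$ and $l = M^{H_2}$ that are arithmetically equivalent but non-isomorphic. The smallest such example has $G$ of order $32$ (a $2$-group, or alternatively one can use $G = \mathrm{SL}_3(\bF_2)$ of order $168$ with the two maximal parabolic subgroups), giving degree-$7$ or degree-$8$ fields. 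I would pick the degree-$7$ example $G=\mathrm{SL}_3(\bF_2)$, $H_1,H_2$ the point- and hyperplane-stabilizers, since $7$ is small and the resulting fields are visibly not totally imaginary once we choose $M$ appropriately (e.g.\ by a realization of $G$ over $\bQ$ in which complex conjugation is trivial or lies in an $H_i$-class chosen so the fixed fields have a real place).

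Next I would handle the distinction between arithmetic and adelic equivalence using Proposition \ref{prop:Adelic and Arith}: arithmetic equivalence gives the local isomorphisms $k_v \cong l_{\phi(v)}$ away from finite exceptional sets $S, T$, whereas adelic equivalence requires this for \emph{all} places. The key classical fact I would invoke is that for arithmetically equivalent fields the exceptional sets are in fact empty at the finite places --- the Dedekind zeta function already encodes the full splitting behavior including ramified primes --- and that arithmetically equivalent fields have the same discriminant (Perlis), the same number of real places and the same number of complex places (since $\zeta_k$ determines the Gamma-factor in the functional equation). Hence the bijection $\phi$ on finite places extends to a bijection on all places with $k_v \cong l_{\phi(v)}$ everywhere, so by part (2) of Proposition \ref{prop:Adelic and Arith} the fields are adelic equivalent. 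This simultaneously gives: the same degree (clear from $[k:\bQ]=[l:\bQ]$, both equal to the index $[G:H_i]$), and the same set of splitting primes (a prime splits completely in $k$ iff it does in $l$, since splitting type is a local invariant visible from $\zeta_k$); the latter, combined with Lemma \ref{Lemma split primes}, forces $k$ and $l$ to have the same normal closure --- which here is just $M$ in both cases, so that property is automatic from the Gassmann setup.

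Finally I would verify non-isomorphism and the non-totally-imaginary condition concretely. Non-isomorphism follows because $H_1$ and $H_2$ are not conjugate in $G$, and an isomorphism $k \cong l$ would lift to an automorphism of $M$ carrying one fixed field to the other, conjugating $H_1$ to $H_2$. For the real-place condition, I would choose the realization of $G = \mathrm{SL}_3(\bF_2)$ as a Galois group over $\bQ$ so that a complex conjugation $c \in G$ can be taken to lie in a subgroup conjugate into both $H_1$ and $H_2$ in a way that produces at least one real embedding of each $k$ and $l$ (this is possible since the Gassmann condition controls only conjugacy-class sizes, leaving freedom in the global realization; alternatively one notes directly that $\mathrm{SL}_3(\bF_2)$ is realized over $\bQ$ with real places, e.g.\ inside a suitable splitting field of an explicit degree-$7$ polynomial).

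\medskip

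The main obstacle I expect is the bookkeeping at the exceptional finite places: one must know that arithmetic equivalence really does force $S$ and $T$ to be empty away from infinity, i.e.\ that ramified primes cannot sneak in as exceptions --- this is true and classical (it is exactly the content behind arithmetically equivalent fields sharing a discriminant), but it is the step that must be cited carefully rather than waved at. The infinite-place matching (same signature) is a clean consequence of the functional equation, and everything else reduces to standard Galois-theoretic properties of Gassmann triples.
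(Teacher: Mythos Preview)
Your overall strategy---building the pair via a Gassmann triple and then upgrading to adelic equivalence---is the right idea, and the verification of the ``furthermore'' clauses (same degree, same splitting primes, same normal closure) is correct and essentially automatic for Gassmann pairs. The paper itself simply cites Klingen's book for a ready-made example, so your approach is more explicit and more informative.

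However, there is a genuine gap at exactly the point you flagged as ``the main obstacle'', and your proposed resolution of it is wrong. It is \emph{not} true that arithmetic equivalence forces the exceptional sets $S,T$ in Proposition~\ref{prop:Adelic and Arith}(1) to be empty at the finite places. The Dedekind zeta function encodes, at each rational prime $p$, only the multiset of residue degrees $\{f_v : v\mid p\}$; at a ramified prime this does not determine the isomorphism types of the local completions $k_v$, since extensions of $\bQ_p$ of a given degree and residue degree are far from unique (especially in the wildly ramified case). Sharing a discriminant only says that the local different exponents have the same sum over $v\mid p$, which is much weaker than $k_v\cong l_{\phi(v)}$. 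In fact there exist arithmetically equivalent number fields that are \emph{not} adelically equivalent---this distinction is precisely one of the themes of Klingen's book, and is why the paper is careful to cite a source where adelic (not merely arithmetic) equivalence is established.

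To repair your argument you must do additional work at the ramified primes: either choose the Galois realization $M/\bQ$ of $G$ so that the decomposition and inertia subgroups at every ramified prime act on $G/H_1$ and $G/H_2$ in a way that yields isomorphic local fields (this is a genuine constraint beyond the Gassmann condition, which only controls cyclic subgroups), or verify by hand for a specific polynomial that the completions match at the finitely many ramified primes. Either route is feasible for the $\mathrm{SL}_3(\bF_2)$ example, but neither is automatic, and you cannot wave it away as ``classical''.
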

\begin{proof}
See \cite[page 239-240]{Klin98}. 
\end{proof}
We now show that non-isomorphic Adelic equivalent fields give a simple
example of non-isomorphic arithmetic groups with isomorphic profinite
completions. Similarly, non-isomorphic arithmetic equivalent fields
give a simple example of non-isomorphic $S$-arithmetic groups with
isomorphic profinite completions. 
\begin{prop}
Fix the standard representation of $SL_{n}$ ($n\geq3$) and let $k$
and $l$ be Adelic equivalent non-isomorphic fields which are not
totally imaginary. Set $\Gamma=SL_{n}(\cO_{k})$ and $\Lambda=SL_{n}(\cO_{l})$.
Then, $\widehat{\Gamma}=\widehat{\Lambda}$ and $\Gamma$ and $\Lambda$
are non-isomorphic. \end{prop}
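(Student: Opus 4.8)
The plan is to identify both profinite completions with explicit products of compact $p$-adic groups using the congruence subgroup property, to match these two products against each other using the adelic equivalence $k\sim l$, and then to deduce non-isomorphism of $\Gamma$ and $\Lambda$ from the rigidity of arithmetic groups recalled in the introduction.

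First I would record that $\Gamma=SL_n(\cO_k)$ and $\Lambda=SL_n(\cO_l)$ lie in $\cA$, with ambient information $(SL_n,k,V_\infty^k)$ and $(SL_n,l,V_\infty^l)$ respectively: $SL_n$ is simply connected, absolutely almost simple and defined over the relevant field, and the $S$-rank for $S=V_\infty^k$ equals $(n-1)\,|V_\infty^k|\ge n-1\ge 2$ since $n\ge 3$. By Bass--Milnor--Serre, $SL_n$ with $n\ge 3$ has the congruence subgroup property, and the congruence kernel $C(V_\infty^k,SL_n)$ is \emph{trivial} precisely because $k$ is not totally imaginary (it would be the group of roots of unity $\mu(k)$ otherwise). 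Hence the map $\widehat{\Gamma}\to\ov{\Gamma}$ of \S\ref{sub:CSP} is an isomorphism, so $\widehat{\Gamma}\cong\ov{\Gamma}\cong\prod_{v\in V_f^k}SL_n(\cO_v)$; likewise $\widehat{\Lambda}\cong\prod_{w\in V_f^l}SL_n(\cO_w)$.

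Next I would feed in the adelic equivalence. By Proposition \ref{prop:Adelic and Arith}(2) there is a bijection $\phi\colon V^k\to V^l$ with $k_v\cong l_{\phi(v)}$ for every $v$; since $\RR$ and $\CC$ are not isomorphic to any non-archimedean local field, $\phi$ restricts to a bijection $V_f^k\to V_f^l$. For $v\in V_f^k$ an isomorphism $k_v\cong l_{\phi(v)}$ carries the valuation ring $\cO_v$ onto $\cO_{\phi(v)}$ (the valuation ring being determined by the field structure of a non-archimedean local field), hence yields $SL_n(\cO_v)\cong SL_n(\cO_{\phi(v)})$. Taking the product over $v\in V_f^k$ gives $\widehat{\Gamma}\cong\prod_{v\in V_f^k}SL_n(\cO_v)\cong\prod_{w\in V_f^l}SL_n(\cO_w)\cong\widehat{\Lambda}$.

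For the non-isomorphism, suppose $\Gamma\cong\Lambda$. As both belong to $\cA$, Margulis super-rigidity (in the form recalled in the introduction, cf. Theorem \ref{thm: superrigidity}) forces any abstract isomorphism to be induced by an isomorphism of the ambient $\bQ$-groups $Res_{k/\bQ}SL_n\cong Res_{l/\bQ}SL_n$ covering an isomorphism of the fields of definition $k\cong l$, contradicting the hypothesis that $k\not\cong l$. I expect the only point requiring care to be the second step — checking that an abstract isomorphism of local fields respects their valuation rings, so that it can be promoted to an isomorphism of the groups $SL_n(\cO_v)$ — but this is standard; everything else is bookkeeping on top of the congruence subgroup property and the rigidity statement already in hand.
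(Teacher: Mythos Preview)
Your proof is correct and follows essentially the same route as the paper: identify $\widehat{\Gamma}$ and $\widehat{\Lambda}$ with the products $\prod_v SL_n(\cO_v)$ via the congruence subgroup property (trivial congruence kernel since $k,l$ are not totally imaginary), match the factors using the bijection of places coming from adelic equivalence, and rule out an abstract isomorphism $\Gamma\cong\Lambda$ by super-rigidity, which would force a field isomorphism $k\cong l$. You are in fact slightly more careful than the paper in noting that $\phi$ restricts to a bijection of \emph{finite} places and that a local-field isomorphism carries valuation rings to valuation rings.
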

\begin{proof}
As $k$ and $l$ are not totally imaginary, the congruence kernel
of $SL_{n}$ w.r.t. $k$ and $l$ we have that 
\[
\widehat{\Gamma}=\widehat{SL_{n}(\cO_{k})}=SL_{n}(\widehat{\cO_{k}})=\prod_{v\in V^{k}}SL_{n}(\cO_{v})
\]
 and similarly 
\[
\widehat{\Lambda}=\widehat{SL_{n}(\cO_{l})}=SL_{n}(\widehat{\cO_{l}})=\prod_{w\in V^{l}}SL_{n}(\cO_{w}).
\]

From the second part Proposition \ref{prop:Adelic and Arith} we know
that since $k$ and $l$ are Adelic equivalent, there exist a bijection
$\phi:V^{k}\ra V^{l}$ such that for all $v\in V_{k}$ there exist
isomorphism between $k_{v}$ and $l_{\phi(v)}$. This isomorphism
induces an isomorphism from $(\cO_{k})_{v}$ to $(\cO_{l})_{\phi(v)}$
and as $SL_{n}$ is defined over $\bQ$ it also induces isomorphism
between $SL_{n}((\cO_{k})_{v})$ to $SL_{n}((\cO_{l})_{\phi(v)})$.
This shows that $\widehat{\Gamma}\cong\widehat{\Lambda}$.

If there were an isomorphism $\Psi:\Gamma\ra\Lambda\subset SL_{n}(l)$,
another version of Margulis' super-rigidity \cite[Theorem 5, page 5]{Mar91}
implies that there exist an embedding of $k$ into $l$, which contradicts
our assumptions. 
\end{proof}
We note that using the first part of Proposition \ref{prop:Adelic and Arith},
the same proof will yield an example of non-isomorphic $S$-arithmetic
groups with isomorphism profinite completion from non-isomorphic arithmetic
equivalent fields. Moreover, using the same group theoretic techniques
that yield arithmetic equivalent pairs of fields, one can find unboundedly
many fields which are arithmetic equivalent. This in turn with give
rise to a family of arithmetic groups with unbounded genus as claimed
in Theorem \ref{thm:Unbounded genus}.

\subsection{Examples involving different $S$'s.\label{sub:diff S's}}

Let $k/\bQ$ be a Galois extension of degree $n$, $p_{0}$ be a prime
that splits completely in $k$ and $X_{p_{0}}:=\{v_{1},\dots,v_{n}\}$
denote the set of all valuation over $p_{0}$. Recall that any element
$\sigma\in Gal(k/\bQ)$ induces a permutation of $X_{p_{0}}$ . For
each $I\subset\{v_{1},\dots,v_{n}\}$ let $\Gamma_{I}:=SL_{d}(\cO_{k,I\cup V_{\infty}^{k}})$
where $d\geq3$ .
\begin{prop}
We have $\widehat{\Gamma_{I}}\cong\widehat{\Gamma_{J}}$ if and only
if $|I|=|J|$.\end{prop}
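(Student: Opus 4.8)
The plan is to prove both directions. For the ``only if'' direction, I would use the machinery of Section~3. Since $\widehat{\Gamma_I}\cong\widehat{\Gamma_J}$ and both groups lie in $\cA$ (here $SL_d$ with $d\ge 3$ satisfies the $S$-rank assumption since $k$ has at least one Archimedean place contributing rank, and in any case $|I\cup V_\infty^k|\ge 1$ with $d\ge 3$ giving $S$-rank $\ge 2$), Proposition~\ref{prop:lie iso SR} gives $L_p^{\Gamma_I}\cong L_p^{\Gamma_J}$ for all primes $p$. Applying this at $p=p_0$: by Lemma~\ref{lem:Lie algebra}.\ref{Lem lie algebra- simple comp}, the number $n_{p_0}^{\Gamma_I}$ of simple components of $L_{p_0}^{\Gamma_I}$ equals the number of valuations over $p_0$ \emph{not} in $S_{\Gamma_I}=I\cup V_\infty^k$, which is exactly $n-|I|$ since $p_0$ splits completely into $n$ valuations and $I$ consists of valuations over $p_0$. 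Hence $n-|I|=n_{p_0}^{\Gamma_I}=n_{p_0}^{\Gamma_J}=n-|J|$, so $|I|=|J|$.

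For the ``if'' direction, suppose $|I|=|J|$. The key point is that the Galois group $Gal(k/\bQ)$ acts transitively on $X_{p_0}$ (standard, since $p_0$ is unramified and $k/\bQ$ is Galois), and therefore acts as the full symmetric group's worth of transitivity needed: in fact $Gal(k/\bQ)$ acts transitively on $X_{p_0}$, but I need it to act transitively on \emph{$m$-element subsets} of $X_{p_0}$ for $m=|I|=|J|$. This is not automatic from transitivity alone. Instead, I would argue differently: since $\cO_k$ is defined over $\bQ$ and $SL_d$ is a $\bQ$-group, any field automorphism $\sigma\in Gal(k/\bQ)$ induces an isomorphism $SL_d((\cO_k)_v)\to SL_d((\cO_k)_{\sigma(v)})$, hence $\widehat{\Gamma_I}\cong\widehat{\Gamma_{\sigma(I)}}$ whenever $I$ and $\sigma(I)$ are related by an element of $Gal(k/\bQ)$. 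But to handle arbitrary $I,J$ with $|I|=|J|$ I should instead directly build the isomorphism at the level of profinite completions: using the congruence subgroup property for $SL_d(\cO_{k,S})$ (valid since $d\ge 3$, $k$ not totally imaginary being automatic as $k/\bQ$ Galois of degree $n$ — actually one must be slightly careful, but $SL_d$ has CSP for $d\ge 3$ over any number field by Bass--Milnor--Serre/Prasad--Rapinchuk unless $k$ is totally imaginary, in which case the congruence kernel is still finite so CSP in our sense holds), we get
\[
\widehat{\Gamma_I}=\prod_{v\in V_f^k,\,v\notin I}SL_d(\cO_v)\times\prod_{v\in V_\infty^k,\,v\notin I}SL_d(k_v)^{\wedge},
\]
where the Archimedean factors disappear from the congruence completion and only contribute through the congruence kernel, which is finite. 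More carefully: $\widehat{\Gamma_I}$ fits in $1\to C\to\widehat{\Gamma_I}\to\prod_{v\notin I}SL_d(\cO_v)\to 1$ with $C$ finite and central; in fact for $SL_d$, $d\ge 3$, the congruence kernel is a fixed finite cyclic group $C(k)$ depending only on $k$ (the group of roots of unity, or trivial), independent of $I$. Thus $\widehat{\Gamma_I}$ is a central extension of $\prod_{v\notin I}SL_d(\cO_v)$ by $C(k)$, and the extension is the pullback of a universal one, so it is determined by the isomorphism type of $\prod_{v\notin I}SL_d(\cO_v)$ together with $C(k)$. Now if $|I|=|J|$, pick any bijection $V^k\setminus I\to V^k\setminus J$ which is the identity outside $X_{p_0}$ and an arbitrary bijection from $X_{p_0}\setminus I$ to $X_{p_0}\setminus J$; since all valuations in $X_{p_0}$ have completion $\bQ_{p_0}$ (as $p_0$ splits completely), this bijection matches $SL_d(\cO_v)\cong SL_d(\bZ_{p_0})$ factor by factor, giving $\prod_{v\notin I}SL_d(\cO_v)\cong\prod_{v\notin J}SL_d(\cO_v)$, hence $\widehat{\Gamma_I}\cong\widehat{\Gamma_J}$.

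The main obstacle I anticipate is the careful treatment of the congruence kernel for $SL_d$ over a general Galois number field $k$ (especially whether $k$ can be totally imaginary, and the precise shape of $C(k)$) and ensuring the central extension defining $\widehat{\Gamma_I}$ really is ``the same'' for all $I$ — i.e., that matching the congruence completions lifts to matching the full profinite completions. The cleanest fix is to invoke that $\widehat{SL_d(\cO_{k,S})}$ is the universal central extension of $\prod_{v\notin S}SL_d(\cO_v)$ in the appropriate category (or that the congruence kernel is the metaplectic kernel, computed by Moore/Prasad--Rapinchuk, and depends only on $k$), so that an isomorphism of the congruence completions respecting the local structure automatically extends. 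Alternatively, and more elementarily, one can restrict to the case where $k/\bQ$ is such that $SL_d$ has the congruence subgroup property with trivial congruence kernel (e.g.\ impose that $k$ has a real place, which one may do since the interesting phenomenon is already visible there), reducing the ``if'' direction to the displayed isomorphism of restricted products, which is immediate from $k_{v}\cong\bQ_{p_0}$ for all $v\in X_{p_0}$.
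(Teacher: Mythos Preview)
Your proposal is correct and follows essentially the same route as the paper: the ``only if'' direction via Proposition~\ref{prop:lie iso SR} applied at $p_0$ to count simple components of $L_{p_0}$, and the ``if'' direction via CSP to write $\widehat{\Gamma_I}=\prod_{v\notin I}SL_d(\cO_v)$ and then matching the $|X_{p_0}\setminus I|$ copies of $SL_d(\bZ_{p_0})$ by any bijection. The paper simply asserts $\widehat{\Gamma_I}=\prod_{v\notin I}SL_d(\cO_v)$ without discussing the congruence kernel, so your extended worry about the totally imaginary case and the shape of the central extension goes beyond what the paper does (and your initial Galois-action detour is unnecessary, as you yourself note).
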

\begin{proof}
As $d\geq3$, $\Gamma_{I}$ posses the congruence subgroup property
for any $I$. This implies that 
\[
\widehat{\Gamma_{I}}=\prod_{v\notin I}SL_{d}(\cO_{v})=\prod_{v\in X_{p_{0}}\setminus I}SL_{d}(\bZ_{p_{0}})\times\prod_{p\neq p_{0}}\prod_{v|p}SL_{d}(\cO_{v})
\]
since the assumption that $p_{0}$ splits completely implies that
$\cO_{v}=\bZ_{p_{0}}$ for all $v\in X_{p_{0}}$. This clearly shows
that $\widehat{\Gamma_{I}}\cong\widehat{\Gamma_{J}}$ whenever $|I|=|J|$.
The {}``only if'' part follows from Proposition \ref{prop:lie iso SR};
indeed, $\widehat{\Gamma_{I}}\cong\widehat{\Gamma_{J}}$ implies that
$L_{p_{0}}^{\Gamma_{I}}\cong L_{p_{0}}^{\Gamma_{J}}$ and these Lie
algebras are isomorphic if and only if $|X_{p_{0}}\setminus I|=|X_{p_{0}}\setminus J|\Longleftrightarrow|I|=|J|$
. 
\end{proof}
Margulis' super rigidity tells us when the arithmetic groups themselves,
rather than merely their profinite completion, can be isomorphic:
\begin{thm}
We have $\Gamma_{I}\cong\Gamma_{J}$ if and only if there exist $\sigma\in Gal(k/\bQ)$
with $\sigma(I)=J$.\end{thm}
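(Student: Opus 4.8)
The plan is to apply Margulis' super-rigidity (Theorem~\ref{thm: superrigidity}), in the same spirit as in the preceding results of~\S\ref{sub:diff S's}, to reduce an abstract isomorphism $\Psi:\Gamma_I\ra\Gamma_J$ to an algebraic statement about restrictions of scalars, and then to unwind what such an isomorphism forces on the sets $I$ and $J$. One direction is immediate: if $\sigma\in Gal(k/\bQ)$ satisfies $\sigma(I)=J$, then $\sigma$ is a ring automorphism of $k$ carrying $\cO_{k,I\cup V_\infty^k}$ onto $\cO_{k,J\cup V_\infty^k}$ (since $\sigma$ permutes $V_\infty^k$ and carries the valuations in $I$ to those in $J$), hence induces an isomorphism $SL_d(\cO_{k,I\cup V_\infty^k})\cong SL_d(\cO_{k,J\cup V_\infty^k})$ by functoriality of $SL_d$ (which is defined over $\bQ$). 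So the content is the ``only if'' direction.

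For that, suppose $\Psi:\Gamma_I\ra\Gamma_J\subset SL_d(k)$ is an isomorphism. Since $\Gamma_I\in\cA$ (here $d\ge 3$ and the $S$-rank is $\ge 2$), Theorem~\ref{thm: superrigidity} applies: after passing to a finite-index subgroup $\Gamma_I'$, the composite $\Gamma_I'\ra SL_d(k)$ factors through a $k$-morphism $\phi:Res_{k/\bQ}SL_d\ra H$ onto the Zariski closure $H$ of $\Psi(\Gamma_I')$, viewed over $\bQ$ (or over $k$, since $SL_d$ is $\bQ$-split). As $\Psi$ is an isomorphism onto a Zariski-dense arithmetic subgroup, $H$ must be (a quotient of $Res_{k/\bQ}SL_d$ that is) $k$-isomorphic to $Res_{k/\bQ}SL_d$ again — here one uses that $Res_{k/\bQ}SL_d$ has no proper normal algebraic subgroups of positive dimension other than via the Galois factors, and $\Psi(\Gamma_I')$ is a lattice in the full group, so no factor is collapsed. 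Then, exactly as in the proof of Theorem~\ref{Main Theorem 1} (the step invoking \cite[Proposition~4.1]{PR94}), the isomorphism $Res_{k/\bQ}SL_d\cong Res_{k/\bQ}SL_d$ arises from a unique automorphism $\sigma$ of the field $k$ together with an inner/graph automorphism of $SL_d$; and the induced isomorphism on $\bQ_p$-points, for each prime $p$, permutes the local factors $\{SL_d(k_v):v\mid p\}$ according to $\sigma$.

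It remains to see that $\sigma(I)=J$. The key point is that $\Psi$, being an isomorphism of the concrete arithmetic groups, must carry $\Gamma_I=SL_d(\cO_{k,I\cup V_\infty^k})$ to a group commensurable with $\Gamma_J=SL_d(\cO_{k,J\cup V_\infty^k})$ inside $SL_d(k)$; since both are $S$-arithmetic with respect to their respective valuation sets and a given $S$-arithmetic subgroup determines its set of valuations (the set of primes $v\notin V_\infty^k$ at which the group is unbounded is precisely $I$, resp.\ $J$), the automorphism $\sigma$ of $k$ underlying $\Psi$ must map the valuations in $I$ bijectively to those in $J$, i.e.\ $\sigma(I)=J$. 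Concretely, one can detect $I$ from $\Gamma_I$ via the congruence completion: $\overline{\Gamma_I}=\prod_{v\notin I\cup V_\infty^k}SL_d(\cO_v)$, and a valuation $v_0\mid p_0$ lies in $I$ iff the $p_0$-adic closure of $\Gamma_I$ omits the corresponding factor; transporting this through $\Psi$ and the Galois action shows the omitted factors match up under $\sigma$, forcing $\sigma(I)=J$.

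The main obstacle I expect is the middle step: justifying rigorously that the target of the super-rigidity morphism $\phi$ is the \emph{full} $Res_{k/\bQ}SL_d$ and that no Galois factor is lost, and then pinning down that the resulting field automorphism is exactly what governs the matching of local factors. This is where one must combine the uniqueness clause in Margulis super-rigidity with the structure theory of $Res_{k/\bQ}$ (its decomposition into Galois-conjugate factors over $\overline{\bQ}$, and the fact that $Aut$ of this group is generated by the permutations of factors coming from $Gal(k/\bQ)$ together with the automorphisms of $SL_d$) — precisely the ``general properties of the restriction of scalars functor'' already invoked in the proof of Theorem~\ref{Main Theorem 1}. Once that is in hand, translating ``$\sigma$ matches the local factors over $p_0$'' into ``$\sigma(I)=J$'' is routine, using that $p_0$ splits completely so that $Gal(k/\bQ)$ acts on $X_{p_0}$ exactly as on the valuations dividing $p_0$.
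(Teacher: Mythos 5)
Your proposal is correct and follows essentially the same route as the paper: Margulis super-rigidity plus the universal property of restriction of scalars produces an automorphism of $Res_{k/\bQ}(SL_d)$ coming from an automorphism of $SL_d$ over some $\sigma\in Gal(k/\bQ)$, and comparing the closures of $\Gamma_I$ and $\Gamma_J$ (i.e.\ the places where each group is unbounded) forces $\sigma(I)=J$. The point you flag as the main obstacle --- that no factor of $Res_{k/\bQ}(SL_d)$ is collapsed --- is handled in the paper by Zariski density of the $S$-arithmetic group $\Gamma_J$ in $Res_{k/\bQ}(SL_d)$, exactly as you suggest.
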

\begin{proof}
Acting on the matrix entries (in the standard representation) of $\Gamma_{I}$
with $\sigma$ gives an isomorphism between $\Gamma_{I}$ and $\Gamma_{\sigma(I)}$,
so the if part is obvious. Now assume $\Gamma_{I}\cong\Gamma_{J}$
and map $\Gamma_{J}$ diagonally to 
\[
Res_{k/\bQ_{p}}(SL_{n})(\bQ_{p_{0}})=\prod_{v\in X_{p_{0}}}SL_{n}(\bQ_{p_{0}}).
\]
Up to passing to finite-index subgroup of $\Gamma_{I}$, Margulis'
super rigidity gives us the following commutative diagram: 
\[
\xymatrix{\Gamma_{I}\ar@{^{(}->}[d]\ar@{^{(}->}[r] & \Gamma_{J}\ar[rr] &  & \overline{\Gamma_{J}}\subset Res_{k/\bQ}(SL_{d})(\bQ_{p_{0}})\\
SL_{d}(k)\ar@{=}[r] & Res_{k/\bQ}(SL_{d})(\bQ)\ar@{^{(}->}[r] & Res_{k/\bQ}(SL_{d})(\bQ_{p_{0}})\ar@{>>}[ur]^{\phi}
}
\]
where $\phi:Res_{k/\bQ}(SL_{d})\rightarrow Res_{k/\bQ}(SL_{d})$ is
a map of algebraic groups. By the universal property of the restriction
of scalars functor, $\phi$ comes from a unique automorphism of $SL_{d}$
composed with an automorphism $\sigma$ of $k/\bQ$. Moreover, it
must map the closure of $\Gamma_{I}$ to the closure of $\Gamma_{J}$.
Therefore, $\sigma$ must map $I$ to $J$, as claimed. 
\end{proof}
The following corollary gives another family of groups which prove
Theorem \ref{thm:Unbounded genus}.
\begin{cor}
Let $I$ be a subset of $X_{p}$ of size $l$ with $0<l\leq n$. Then,
the set $\cA_{\Gamma_{I}}$ has at least $\frac{{n \choose l}}{n}$
isomorphism classes and is contained in at least $\frac{{n \choose l}}{n}$
commensurability classes. 
\end{cor}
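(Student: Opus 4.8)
The plan is to combine the two preceding theorems of this subsection, which together give a complete description of $\widehat{\Gamma_I}$ up to isomorphism (it depends only on $|I|$) and of $\Gamma_I$ up to isomorphism (it depends only on the $\mathrm{Gal}(k/\bQ)$-orbit of $I$ in the $n$-element set $X_{p_0}$). Fix $I\subset X_{p_0}$ with $|I|=l$, $0<l\le n$. First I would observe that every subset $J\subset X_{p_0}$ with $|J|=l$ satisfies $\widehat{\Gamma_J}\cong\widehat{\Gamma_I}$ by the first proposition, so $\Gamma_J\in\cA_{\Gamma_I}$ for all such $J$; there are $\binom{n}{l}$ of them. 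By the second theorem (Margulis superrigidity), $\Gamma_J\cong\Gamma_{J'}$ if and only if $J$ and $J'$ lie in the same orbit under the action of $\mathrm{Gal}(k/\bQ)$ on $\binom{X_{p_0}}{l}$. Since $|\mathrm{Gal}(k/\bQ)|=n$, each orbit has size at most $n$, so the number of orbits is at least $\binom{n}{l}/n$. Hence among the groups $\{\Gamma_J : |J|=l\}\subset\cA_{\Gamma_I}$ there are at least $\binom{n}{l}/n$ pairwise non-isomorphic ones, giving the bound on isomorphism classes.

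For the commensurability statement I would argue that two groups $\Gamma_J,\Gamma_{J'}$ in different $\mathrm{Gal}(k/\bQ)$-orbits are not even abstractly commensurable. This follows from the same superrigidity machinery used to prove the second theorem: a finite-index subgroup of $\Gamma_J$ still embeds as an $S$-arithmetic subgroup with the same ambient information $(SL_d, k, J\cup V_\infty^k)$ (in the sense of Definition \ref{def: family A concrete}, using Lemma \ref{lem:k-iso commen} and the discussion in the proof of Theorem \ref{Main Theorem 1}), so an abstract commensurability $\Gamma_J\sim\Gamma_{J'}$ would, via the factorization through $\mathrm{Res}_{k/\bQ}(SL_d)$, force an automorphism $\sigma$ of $k/\bQ$ carrying the closure of (a finite-index subgroup of) $\Gamma_J$ to that of $\Gamma_{J'}$, hence $\sigma(J)=J'$. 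Thus distinct orbits give distinct commensurability classes, and the count $\binom{n}{l}/n$ applies verbatim.

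The main obstacle I anticipate is making the commensurability half fully rigorous: one must check that passing to a finite-index subgroup does not lose the $S$-arithmetic structure, i.e.\ that the valuation set attached to a finite-index subgroup of $\Gamma_J$ is still exactly $J\cup V_\infty^k$ (up to the equivalences allowed by Lemma \ref{lem:finiteindex free of compacts}, which here change nothing since no $SL_d(k_v)$ is compact), and that the congruence closure is preserved. Given the congruence subgroup property for $SL_d$ with $d\ge3$ and the strong approximation used throughout Section \ref{sec: reductions up-to commensurabilty}, this is routine but should be stated carefully. Everything else is bookkeeping with binomial coefficients and the orbit-counting already supplied by the two theorems.
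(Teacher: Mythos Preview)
Your proposal is correct and is exactly the argument the paper intends: the corollary is stated without proof, as an immediate consequence of the preceding proposition (profinite completion depends only on $|I|$) and theorem (isomorphism class depends only on the $\mathrm{Gal}(k/\bQ)$-orbit of $I$), together with the orbit bound $|\mathrm{Gal}(k/\bQ)|=n$. Your additional remark that the superrigidity argument of the theorem applies equally well to finite-index subgroups (hence distinct Galois orbits give distinct commensurability classes) is the right way to justify the commensurability half, and goes through for the reasons you indicate.
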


\section{Finiteness within a commensurability class}

We now turn to establish the desired finiteness result within a commensurability
class that admits the congruence subgroup property. Recall the definition
of a commensurability class given in subsection \ref{sub:working-definition A}.
Given a commensurability class $\cC$, there exist $G,k$ and $S$
such that each element of $\cC$ has an isomorphic copy in 

\[
\cB_{G,k,S}:=\{\Lambda\in\cA:(G_{\Lambda},F_{\Lambda},S_{\Lambda})=(G,k,S)\}.
\]
Note that all the elements of $\cB_{G,k,S}$ posses the congruence
subgroup property if and only if the congruence kernel of $G(\cO_{k,S})$
is finite. In this section we prove:
\begin{thm}
\label{thm: B family} Let $G(\cO_{k,S})$ be an element of $\cA_{CSP}$,
and $\cB:=\cB_{G,k,S}$. For all $\Gamma\in\cB$, $\cB_{\Gamma}:=\{\Lambda\in\cB:\widehat{\Lambda}\cong\widehat{\Gamma}\}$
is finite union of isomorphism classes. 
\end{thm}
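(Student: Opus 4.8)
The plan is to exploit the congruence subgroup property to replace profinite completions by congruence completions, reducing the problem to a question about lattices in a fixed $p$-adic (in fact $S$-adic) group. Since $G(\cO_{k,S})\in\cA_{CSP}$, every element of $\cB$ has finite congruence kernel, so from the short exact sequence $1\to C(S,\Lambda)\to\widehat\Lambda\to\ov\Lambda\to 1$ we see that $\widehat\Lambda$ is, up to a finite central (by \cite{Serre}-type arguments) piece, the congruence completion $\ov\Lambda$, which is an open compact subgroup of $G(\bA_{k,S})$. First I would show that an isomorphism $\Phi:\widehat\Gamma\to\widehat\Lambda$ between two elements of $\cB$ carries the congruence kernel of $\Gamma$ to that of $\Lambda$ — this should follow because the congruence kernel is a characteristic subgroup (it is the intersection of all finite-index subgroups of bounded ``congruence defect'', or one can characterize it via Proposition \ref{prop:maximal} as the obstruction to the maximal analytic quotients being the full $\ov\Lambda$). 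Hence $\Phi$ descends to an isomorphism $\bar\Phi:\ov\Gamma\to\ov\Lambda$ of open compact subgroups of $G(\bA_{k,S})$.

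Next, both $\ov\Gamma$ and $\ov\Lambda$ are open compact subgroups of the \emph{same} group $G(\bA_{k,S})$ (the ambient data $(G,k,S)$ is fixed throughout $\cB$), and $\Gamma$, $\Lambda$ appear as (the images of) lattices in $G(\bA_{k,S})$: after replacing them by finite-index congruence subgroups, $\Gamma$ and $\Lambda$ are arithmetic lattices in $\prod_{v\in S}G(k_v)$ commensurable with $G(\cO_{k,S})$, and the closure map identifies them with their images in $G(\bA_{k,S})$. The isomorphism $\bar\Phi$ of the congruence closures, together with strong approximation for $G$ w.r.t. $S$, then forces $\bar\Phi$ to extend to a continuous automorphism of $G(\bA_{k,S})$ — here one uses that $G(\bA_{k,S})$ is (up to finite index and center) topologically generated by the closures of unipotents, and that automorphisms of $\prod_{v\notin S}G(\cO_v)$ compatible with the algebraic structure are, by rigidity of the simple $p$-adic factors (for each $v$, $\mathrm{Aut}(G(\cO_v))$ is essentially $\mathrm{Aut}(G)(k_v)$ extended by field automorphisms of $k_v$), built from a permutation of the places $v\notin S$ together with local algebraic automorphisms. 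The key finiteness input is then that there are only finitely many such ``twists'': the permutation of places must preserve local isomorphism types of $G(k_v)$, and — crucially — it must send almost every place to itself because it is induced by a \emph{global} object, so by an argument as in the proof of Theorem \ref{Main Theorem 1} (uniqueness in the restriction-of-scalars functor, Chebotarev via Lemma \ref{Lemma split primes}, and \cite{BS64}) only finitely many global groups $G_\Lambda$ with $G_\Lambda(\bA_{k,S})\cong G(\bA_{k,S})$ can occur.

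Having pinned down $G$, $k$, $S$ and the (finitely many) candidate commensurating automorphisms $\theta$ of $G(\bA_{k,S})$, the final step is: within a \emph{fixed} commensurability class, lattices $\Lambda$ with $\ov\Lambda$ congruence-isomorphic to $\ov\Gamma$ via some fixed $\theta$ are in bijection with the $\theta$-twisted $G(\cO_{k,S})$-conjugacy classes of open compact subgroups of $G(\bA_{k,S})$ of a fixed ``shape''. But any such $\Lambda$, being commensurable with $G(\cO_{k,S})$, is sandwiched between $G(\cO_{k,S})$ and a bounded-index overgroup determined by finitely many local conditions at a finite set of places (the non-full primes, by Corollary \ref{cor:dimension and normal closures}\ref{enu:S} the relevant places are bounded); thus $\Lambda$ lies in a fixed commensurability class and its conjugacy class is determined by a coset in a finite double-coset space $G(\cO_{k,S})\backslash\!\!\!\big/\,N/G(\cO_{k,S})$ where $N$ is a fixed bounded-index overgroup — a finite set. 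Passing from ``finitely many conjugacy classes of lattices'' to ``finitely many isomorphism classes of abstract groups $\Lambda$'' is immediate since conjugate lattices are isomorphic, and Margulis superrigidity (Theorem \ref{thm: superrigidity}) guarantees that non-conjugate lattices in this family are, up to finitely many identifications coming from $\mathrm{Aut}$, non-isomorphic. I expect the main obstacle to be the second step: showing that $\bar\Phi$, a priori merely a topological group isomorphism of two open compacts, is implemented by an automorphism of the ambient adelic group respecting the algebraic structure — this requires carefully feeding the abstract isomorphism back into superrigidity (applied to the composite $\Lambda\hookrightarrow\ov\Lambda\xrightarrow{\bar\Phi^{-1}}\ov\Gamma$, viewed $v$-adically at each place) to recover the local algebraic maps, and then checking these glue to a genuine adelic automorphism of the required finite type.
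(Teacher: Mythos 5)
Your reduction to congruence completions is not justified at its first step: you assert that an abstract isomorphism $\Phi:\widehat{\Gamma}\to\widehat{\Lambda}$ must carry the congruence kernel $C(S,\Gamma)$ to $C(S,\Lambda)$, i.e.\ that the congruence kernel is characteristic in the profinite completion. That is not known and you give no argument (neither of your parenthetical characterizations pins down the kernel itself). The paper sidesteps this entirely: it only uses finiteness of the kernel to pass to a finite-index subgroup $\Gamma^{0}$ with \emph{trivial} congruence kernel, so that $\widehat{\Gamma^{0}}=\ov{\Gamma^{0}}$ sits inside $G(\bA_{k,S})$ with no descent of $\Phi$ needed. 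Your second step has the same character: you want $\bar\Phi$ to extend to an automorphism of $G(\bA_{k,S})$ "built from a permutation of places and local algebraic automorphisms", but the real difficulty -- why an abstract isomorphism of the compact open subgroups must respect the product over places at all -- is exactly what you defer to the end. The paper's Proposition \ref{prop:reduction to factor} is the solution: one kills cross-prime components because a pro-$p$ group has finite image in $G(k_{w})$ for $w\nmid p$, kills those finite images by passing to $[\langle\Gamma_{v}^{j}\rangle,\langle\Gamma_{v}^{j}\rangle]$ via Jordan's theorem (with Margulis' normal subgroup theorem guaranteeing finite index), and then uses semisimplicity of the Lie algebra to get a permutation of the places above each $p$; only after that does local rigidity (Pink's theorem) enter, applied factor by factor, with no need for a global adelic automorphism.

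The decisive gap, however, is your finiteness step. You claim every $\Lambda\in\cB_{\Gamma}$ is "sandwiched between $G(\cO_{k,S})$ and a bounded-index overgroup", so that only a finite double-coset space of possibilities remains. There is no such bound: elements of $\cB$ are merely commensurable with $G(\cO_{k,S})$, and nothing in the hypotheses bounds their index in a maximal lattice (equivalently their covolume in $G_{S}=\prod_{v\in S}G(k_{v})$) a priori, so your double-coset space is not finite and Corollary \ref{cor:dimension and normal closures} does not supply such a bound. Producing the missing quantitative control is the heart of the paper's argument: the factor-type isomorphism, upgraded by Pink's theorem to measure-preserving local isomorphisms, is fed into Prasad's volume formula (with the Iwahori normalization) to show $\Gamma$ and $\Lambda$ have \emph{equal} covolume in $G_{S}$, and then Borel's finiteness theorem for lattices of bounded covolume in the higher-rank group $G_{S}$ yields finitely many isomorphism classes. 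Your proposal never establishes any covolume or index bound, and superrigidity alone cannot replace it, so the conclusion does not follow as written.
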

We will in fact show that the covolume in $G_{S}:=\prod_{v\in S}G(k_{v})$
of elements in $\cB_{\Gamma}$ is equal to the covolume as $\Gamma$.
Using Theorem \ref{thm: B family} in conjunction with Theorem \ref{Main Theorem 1}
we prove Theorem \ref{thm:CSP} in the next section.

\subsection{Overview of the proof}

Given an isomorphism $\Phi:\widehat{\Gamma}\ra\widehat{\Lambda}$
, we will find in subsection \ref{sub:Factor-iso for subgps} $\Gamma^{0}<_{fi}\Gamma,$
$\Lambda^{0}<_{fi}\Lambda$ such that $\Phi|_{\widehat{\Gamma^{0}}}$
has special properties. We examine such maps in subsection \ref{sub:Product-type-elements}
and show in particular that these properties will allow us to deduce
that $\Gamma^{0}$ and $\Lambda^{0}$ has the same covolume in $G_{S}$.
From this we deduce the desired result in subsection \ref{sub:proof within}.

\subsection{Product-type elements and Factor-type isomorphisms\label{sub:Product-type-elements}}

The reader is advised to recall the notions from subsections \ref{sub:Adeles}
and \ref{sub:CSP} and that we called $\Gamma\in\cB$ \emph{rational}
if $\Gamma\subset G(k)$. We say that a rational element $\Gamma\in\cB$
is of \emph{product-type} if $\Gamma$ admits the congruence subgroup
property and if $\widehat{\Gamma}=\ov{\Gamma}=\prod_{v\notin S_{\Gamma}}\Gamma_{v}\subset G(\bA_{k,S})$
with $\Gamma_{v}\subset G(k_{v})$. For product-type element $\Gamma$,
we denote the $v$-factor of $\widehat{\Gamma}$ by $\Gamma_{v}$.
We let $\cB_{0}$ be the subset of elements of $\cB$ which are rational
and have the trivial group as their congruence kernel. As explained
in subsection \ref{sub:CSP} the profinite completion of an element
of $\cB_{0}$ is an open compact subgroups of $G(\bA_{k,S})$. 
\begin{defn}
\label{def:factor type} Let $\Gamma,\Lambda\in\cB$ be rational product-type
elements, i.e. $\Gamma,\Lambda\subset G(k)$ and $\widehat{\Gamma}=\prod_{v\notin S}\Gamma_{v}$,
$\widehat{\Lambda}=\prod_{v\notin S}\Lambda_{v}$. An isomorphism
$\Phi:\widehat{\Gamma}\ra\widehat{\Lambda}$ is called of \emph{factor-type}
if for every $v\notin S$, $\Phi|_{i_{v}(\Gamma_{v})}$ is an isomorphism
between $i_{v}(\Gamma_{v})$ and $i_{w}(\Lambda_{w})$ for some $w\notin S$. 
\end{defn}
We will shortly show that the existence of a factor-type isomorphism
implies the equality of the covolumes of $\Gamma$ and $\Lambda$
in $G_{S}$, and to this end we briefly review the volume formula
of Prasad. We need to use a very simple case of it which is supplied
by \cite[Theorem 3.7]{Pr89}. In the terms given there, a product-type
element $\Delta\in\cB_{0}$ is called the principal $S$-arithmetic
subgroup determined by the parahoric subgroups $\Delta_{v}$ where
$\widehat{\Delta}=\prod_{v\notin S}\Delta_{v}$ (See \cite[Theorem 2.1\&3.4]{Pr89}).
Theorem 3.7 of \cite{Pr89} shows that the volume of $G_{S}/{\Delta}$
is 
\[
vol(G_{S}/{\Delta})=C(G,k)\cdot\cE(\Delta)
\]
 where $C(G,k)$ is a constant that depends only on $G$ and $k$
and does not depend on $\Delta$, and $\cE(\Delta)$ is the infinite
product $\prod_{v\notin S}\epsilon_{v}(\Delta_{v})^{-1}$ where $\epsilon_{v}(\Delta_{v})$
is the volume of $\Delta_{v}$ with respect to the Haar measures $\omega_{v}^{*}$
which are normalized to give measure one to any Iwahori subgroup (See
\cite[3.4]{Pr89}). 
\begin{lem}
\label{lem:Covolume} Let $\Gamma,\Lambda\in\cB_{0}$ be product-type
elements (as in definition \ref{def:factor type}) such that there
exist a factor-type isomorphism between their profinite completions.
Then, $\Gamma$ and $\Lambda$ have the same covolume in $G_{S}:=\prod_{v\in S}G(k_{v})$.\end{lem}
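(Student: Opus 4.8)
The plan is to use Prasad's volume formula as recalled just above: for a product-type element $\Delta\in\cB_0$ one has $\mathrm{vol}(G_S/\Delta)=C(G,k)\cdot\cE(\Delta)$ with $\cE(\Delta)=\prod_{v\notin S}\epsilon_v(\Delta_v)^{-1}$, where the constant $C(G,k)$ depends only on $G$ and $k$ (hence is the same for $\Gamma$ and $\Lambda$, which both live in $\cB=\cB_{G,k,S}$). So it suffices to show $\cE(\Gamma)=\cE(\Lambda)$, i.e.\ that the two infinite products $\prod_{v\notin S}\epsilon_v(\Gamma_v)^{-1}$ and $\prod_{w\notin S}\epsilon_w(\Lambda_w)^{-1}$ agree. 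The factor-type isomorphism $\Phi:\widehat\Gamma\to\widehat\Lambda$ is exactly the input that lets us match the factors.

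First I would record what a factor-type isomorphism gives combinatorially: by Definition~\ref{def:factor type}, $\Phi$ carries each $i_v(\Gamma_v)$ isomorphically onto some $i_w(\Lambda_w)$. Since $\Phi$ is an isomorphism of profinite groups, the assignment $v\mapsto w$ is injective, and by applying the same reasoning to $\Phi^{-1}$ (which is again of factor-type, interchanging the roles of $\Gamma$ and $\Lambda$) one gets a bijection $\psi:V^k\setminus S\to V^k\setminus S$ with $\Gamma_v\cong\Lambda_{\psi(v)}$ as topological groups for every $v\notin S$. The key point is then that the local volume $\epsilon_v(\Gamma_v)$, computed with respect to the Haar measure $\omega_v^\ast$ normalized to give an Iwahori subgroup volume one, is determined by the abstract (topological) isomorphism type of the parahoric $\Gamma_v$ together with the structure of $G(k_v)$. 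But $G$, $k$ and $S$ are fixed across $\cB$, so $G(k_v)$ is literally the same group for $\Gamma$ and for $\Lambda$ at each place; and for two places $v,w\notin S$ the groups $G(k_v)$ and $G(k_w)$ differ only through $[k_v:\QQ_p]$, the residue characteristic, and the local index data of $G$ — all of which are already seen in the closure of a single factor. Concretely, $\epsilon_v(\Gamma_v)$ is, up to the Iwahori normalization, the index $[\Gamma_v:I_v]^{\pm 1}$ of an Iwahori subgroup $I_v$ inside (a conjugate of) $\Gamma_v$, which is a finite number extracted from the finite group $\Gamma_v/(\text{pro-}p\text{ radical})$ and hence invariant under topological isomorphism. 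Therefore $\epsilon_v(\Gamma_v)=\epsilon_{\psi(v)}(\Lambda_{\psi(v)})$ for all $v$, and re-indexing the convergent product by the bijection $\psi$ gives $\cE(\Gamma)=\cE(\Lambda)$, whence the covolumes coincide.

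I would lay the argument out in three short steps: (1) from factor-type-ness produce the bijection $\psi$ of places with $\Gamma_v\cong\Lambda_{\psi(v)}$; (2) show the local factor $\epsilon_v(\Gamma_v)$ depends only on $G(k_v)$ and on the topological isomorphism type of the parahoric $\Gamma_v$ — this is where one cites \cite[3.4, Theorem 3.7]{Pr89} for the definition of $\epsilon_v$ and notes that an Iwahori subgroup is recognizable as a maximal pro-$p$-by-(torus over residue field) subgroup, so its index in $\Gamma_v$ is an isomorphism invariant; (3) substitute into Prasad's formula and reindex the infinite product via $\psi$, using that both products converge (Prasad) so the rearrangement is legitimate. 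The main obstacle is step (2): making precise that the normalized local volume $\epsilon_v(\Gamma_v)$ is genuinely an invariant of the abstract topological group $\Gamma_v$ and of the fixed ambient $G(k_v)$, rather than of the embedding $\Gamma_v\subset G(k_v)$. This is handled by the observation that all parahorics containing a fixed Iwahori are conjugate-parametrized by subsets of the local Dynkin diagram, the relevant volume ratios are the orders of the corresponding finite reductive quotients, and these orders are read off from the finite quotient $\Gamma_v/\Phi(\Gamma_v)$ (Frattini), hence preserved by $\Phi$; since $G,k,S$ are the same for $\Gamma$ and $\Lambda$ there is no ambiguity in which $G(k_v)$ each factor sits in. With step (2) in hand the rest is bookkeeping.
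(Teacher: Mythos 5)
Your overall route is the paper's: invoke Prasad's formula from \cite[3.4, Theorem 3.7]{Pr89}, note that the constant $C(G,k)$ is common to all of $\cB_{G,k,S}$, extract from the factor-type isomorphism a permutation $\sigma$ of the places outside $S$ with $\Gamma_{v}\cong\Lambda_{\sigma(v)}$, and reduce to showing $\epsilon_{v}(\Gamma_{v})=\epsilon_{\sigma(v)}(\Lambda_{\sigma(v)})$ before re-indexing the convergent product. Where you diverge is exactly at that local step. The paper settles it with Pink's rigidity theorem \cite[Theorem 0.3]{Pink98}: the abstract isomorphism $\Gamma_{v}\to\Lambda_{\sigma(v)}$ extends to an algebraic isomorphism of $G$ over an isomorphism of local fields $k_{v}\to k_{\sigma(v)}$, which carries Iwahori subgroups to Iwahori subgroups and (by unimodularity) is measure-preserving for the normalization $\omega_{v}^{*}$. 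You instead try to argue that the normalized volume $\epsilon_{v}(\Gamma_{v})=[\Gamma_{v}:I_{v}]$ is an invariant of the abstract topological group $\Gamma_{v}$, which would make Pink unnecessary; this is a legitimate and arguably more elementary alternative, but as written your justification of it has genuine soft spots.

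Concretely: the parenthetical claim that the relevant orders are ``read off from the finite quotient $\Gamma_{v}/\Phi(\Gamma_{v})$ (Frattini)'' is false --- the Frattini quotient of a parahoric does not remember the order of its finite reductive quotient, only an elementary abelian shadow of it; and ``maximal pro-$p$-by-(torus over the residue field) subgroup'' is not a precise characterization of an Iwahori. The repair, staying within your approach, is: the residue characteristic $p$ is intrinsic to $\Gamma_{v}$ (it is the unique prime for which $\Gamma_{v}$ is virtually pro-$p$, which also forces the two matched places to have equal residue characteristic); the pro-unipotent radical $\Gamma_{v}^{+}$ is the maximal normal pro-$p$ subgroup, hence characteristic, with $\Gamma_{v}/\Gamma_{v}^{+}$ a finite reductive group; and the Iwahori subgroups contained in the parahoric $\Gamma_{v}$ are exactly the normalizers of its pro-$p$ Sylow subgroups (equivalently, the preimages of the Borel subgroups of $\Gamma_{v}/\Gamma_{v}^{+}$). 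Hence any topological isomorphism $\Gamma_{v}\to\Lambda_{\sigma(v)}$ maps Iwahoris to Iwahoris and preserves the index $[\Gamma_{v}:I_{v}]=\epsilon_{v}(\Gamma_{v})$, which is what you need. (Alternatively, simply cite Pink as the paper does; that is the shortest complete justification.) One further point you share with the paper and should make explicit: the argument uses that each factor $\Gamma_{v}$ is a parahoric subgroup, so that Prasad's Theorem 3.7 applies and each factor contains an Iwahori; with that hypothesis and the repaired step (2), your re-indexing argument does yield $\cE(\Gamma)=\cE(\Lambda)$ and hence equality of covolumes.
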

\begin{proof}
We use the notation of Definition \ref{def:factor type}. As $\Gamma,\Lambda$
are both product-type elements of $\cB_{0}$, it is enough to show
that $\cE(\Gamma)=\prod_{v\notin S}\epsilon_{v}(\Gamma_{v})^{-1}$
is equal to $\cE(\Lambda)=\prod_{v\notin S}\epsilon_{v}(\Lambda_{v})^{-1}$.
The existence of a factor-type isomorphism implies that there exists
$\sigma\in Sym\{v:v\notin S\}$ such that $\Gamma_{v}$ is isomorphic
to $\Lambda_{\sigma(v)}$, for all $v\notin S$. As $G$ is absolutely
simple and simply-connected, and $k_{v},k_{\sigma(v)}$ are local
fields, a theorem of Pink (\cite[Theorem 0.3]{Pink98}) claims that
any such isomorphism extends uniquely to an algebraic automorphism
$\phi$ of $G$ over a unique isomorphism of local fields $\psi:k_{v}\ra k_{\sigma(v)}$.
We claim that such isomorphism is measure-preserving. Indeed, the
map $\psi$ is measure-preserving since the measures $\omega_{v}^{*}$
are normalized to give measure one to any Iwahori subgroup and $\psi$
maps an Iwahori subgroup to an Iwahori subgroup. Moreover, as $G$
is simple, it is unimodular, so the automorphism $\phi$ is also measure-preserving.
Therefore, $\epsilon_{v}(\Gamma_{v})=\epsilon_{\sigma(v)}(\Lambda_{\sigma(v)})$.
As $\sigma\in Sym\{v:v\notin S\}$ it follows that $\cE(\Gamma)=\cE(\Lambda)$,
so the covolumes of $\Gamma$ and $\Lambda$ are equal. 
\end{proof}
In the next subsection we will see that any isomorphism induces a
factor-type isomorphism on certain finite-index subgroups.

\subsection{\label{sub:Factor-iso for subgps}Factor-type isomorphism between
finite-index subgroups}

Fix a representation of $G$ to $GL_{n}$. By Jordan's Theorem (\cite[page 98]{Dix71})
there exist a constant $j=j(n)$ such that $\langle A^{j}\rangle$
(the group generated by the $j-th$ powers of A) is Abelian for any
finite subgroup $A$ of $GL_{n}(k_{v})$ (for any valuation $v\in V^{k}$).
The constant $j$ is called Jordan's constant. Recall that by Lemma
\ref{lem:subgroups of profinite completion}, we can define a finite-index
subgroup of an element $\Gamma$ by specifying a finite-index subgroup
of $\widehat{\Gamma}$. In this subsection we prove the following:
\begin{prop}
\label{prop:reduction to factor} Let $\Gamma,\Lambda$ be in $\cB_{0}$
and assume that $\Gamma$ is of product-type with $\widehat{\Gamma}=\prod_{v\notin S}\Gamma_{v}$.
Let $ $ $D=\prod_{v\notin S}\Gamma_{v}^{0}$ where $\Gamma_{v}^{0}=[\langle\Gamma_{v}^{j}\rangle,\langle\Gamma_{v}^{j}\rangle]$.
Then $D<_{fi}\widehat{\Gamma}$ and there exist $\Gamma^{0}<_{fi}\Gamma$
with $\widehat{\Gamma^{0}}=H$. Moreover, let $\Phi:\widehat{\Gamma}\ra\widehat{\Lambda}$
be an isomorphism and $\Lambda^{0}:=\Phi_{*}(\Gamma^{0})$ see (definition
\ref{def: FI corres}). Then, $\Lambda^{0}$ is of product-type and
$\Phi|_{{\widehat{\Gamma^{0}}}}$ is a factor-type isomorphism between
$\widehat{\Gamma^{0}}$ and $\Phi({\widehat{\Gamma^{0}}})=\widehat{\Lambda^{0}}$. \end{prop}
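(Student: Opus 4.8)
The plan is to first handle the finite-index claims on the $\Gamma$-side using only local structure theory, and then use a ``localization'' argument to force $\Phi$ to respect the factor decomposition after passing to $\Gamma^{0}$. For the first part: each $\Gamma_v$ is a compact open subgroup of $G(k_v)$, hence a profinite group which is virtually a pro-$p$ group of finite rank (it contains an open uniform pro-$p$ subgroup). Raising to the $j$-th power and taking commutators only removes a bounded amount: $\langle\Gamma_v^j\rangle$ is open in $\Gamma_v$ because $\Gamma_v$ is finitely generated (as a profinite group) of bounded rank, so $\Gamma_v^j$ topologically generates an open subgroup; and then $[\langle\Gamma_v^j\rangle,\langle\Gamma_v^j\rangle]$ is open because $G$ is semisimple, so $\langle\Gamma_v^j\rangle$ has no infinite abelian (or solvable) continuous quotient — more precisely its Lie algebra is the full semisimple $\mathrm{Lie}_{\mathbb{Q}_p}G(k_v)$, which is perfect. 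The key point making $D<_{fi}\widehat\Gamma$ is that for all but finitely many $v\notin S$ the group $\Gamma_v$ is already a hyperspecial (or at least special) parahoric with $\Gamma_v = G(\mathcal{O}_v)$, and for such $v$ with residue characteristic large relative to $j$ and to the Coxeter number of $G$, one has $\Gamma_v^0=\Gamma_v$ outright (the Frattini-type quotients of $G(\mathcal{O}_v)$ are killed by passing to $j$-th powers and commutators); so only finitely many factors shrink, and each shrinks by finite index. Then $D$ is open in $\widehat\Gamma$, so by Lemma~\ref{lem:subgroups of profinite completion} it is the closure of a unique $\Gamma^0<_{fi}\Gamma$ with $\widehat{\Gamma^0}=D$; note $\Gamma^0$ is still rational and, being an intersection of congruence subgroups, still has trivial congruence kernel, hence $\Gamma^0\in\cB_0$ and is visibly of product-type with $v$-factor $\Gamma_v^0$.

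The heart of the argument is showing $\Phi|_{\widehat{\Gamma^0}}$ is of factor-type, i.e.\ that $\Phi$ carries each $i_v(\Gamma_v^0)$ isomorphically onto $i_w(\Lambda_w^0)$ for a single $w$. First I would establish that $\Lambda^0:=\Phi_*(\Gamma^0)$ is again of product-type: since $\widehat{\Gamma^0}=\prod_{v}\Gamma_v^0$ and each $\Gamma_v^0$ is (topologically) perfect, $\widehat{\Gamma^0}$ is its own closure of commutators and decomposes as a product of its (finitely generated, semisimple $p$-adic analytic) minimal closed normal ``block'' factors; an abstract isomorphism must permute any canonically-defined such decomposition, and I will argue the $\Gamma_v^0$ are exactly the minimal nontrivial closed normal subgroups that are ``just-infinite-like'' in the appropriate sense (using that each has simple associated Lie algebra by Lemma~\ref{lem:Lie algebra}(1), and that distinct factors commute while no proper diagonal subgroup of two of them is normal). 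Transporting this decomposition through $\Phi$ exhibits $\widehat{\Lambda^0}$ as a product $\prod_w \Lambda_w^0$ with $\Lambda_w^0\cong\Gamma_{v}^0$ for the matched indices, which (combined with $\Lambda^0\in\cB_0$ and Prasad/strong-approximation as in subsection~\ref{sub:CSP}) forces $\Lambda^0$ to be product-type with $w$-factor $\Lambda_w^0$. By construction this matching is precisely the factor-type condition of Definition~\ref{def:factor type}.

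The main obstacle I anticipate is the \emph{rigidity of the product decomposition} under an arbitrary abstract (not a priori continuous) isomorphism $\Phi$. The subtlety is twofold: (i) $\Phi$ need not be continuous, so one must first invoke that finitely generated profinite groups have the property that every finite-index subgroup is open (Nikolov--Segal), which makes every abstract isomorphism automatically a homeomorphism here — this is what legitimizes pushing open subgroups and closures through $\Phi$; and (ii) one must pin down a decomposition of $\widehat{\Gamma^0}$ into the $\Gamma_v^0$ that is \emph{intrinsic} to the abstract group structure. I would do the latter by the following scheme: the set of minimal nontrivial closed normal subgroups of $\widehat{\Gamma^0}$ that are $p$-adic analytic with \emph{simple} Lie algebra is exactly $\{\Gamma_v^0\}$ — their simplicity (Lemma~\ref{lem:Lie algebra}(1)), their pairwise commuting, and the fact that any closed normal subgroup must, by projecting to each factor and using that $\Gamma_v^0$ has no proper closed normal subgroups of full Lie-dimension, be a sub-product — together characterize them; this is intrinsic, so $\Phi$ permutes them. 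The one technical wrinkle is ruling out ``diagonal'' closed normal subgroups, which one settles by Goursat's lemma plus the observation that an isomorphism $\Gamma_v^0\cong\Gamma_w^0$ as abstract (hence topological) groups would be graph-normal only if $v$ and $w$ are identified, in which case it is harmless. Once factor-type is secured, Lemma~\ref{lem:Covolume} immediately yields equal covolumes in $G_S$, which is the payoff feeding into subsection~\ref{sub:proof within}.
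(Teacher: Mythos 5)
Your first half (that $D$ is open in $\widehat{\Gamma}$ and corresponds to some $\Gamma^{0}<_{fi}\Gamma$) is essentially fine, though you take a different route from the paper: the paper simply applies Margulis' normal subgroup theorem to the infinite normal subgroup $[\langle\Gamma^{j}\rangle,\langle\Gamma^{j}\rangle]$ of $\Gamma$ and takes closures, whereas you argue place by place; your claim that $\Gamma_{v}^{0}=\Gamma_{v}$ for almost all $v$ is true but rests on unproved (if standard) facts about hyperspecial $G(\cO_{v})$ being topologically perfect and generated by $j$-th powers for large residue characteristic. The genuine gap is in the main part. First, your intrinsic characterization is false as stated: the $\Gamma_{v}^{0}$ are \emph{not} minimal nontrivial closed normal subgroups of $\widehat{\Gamma^{0}}$ (indeed no minimal ones exist), because any proper open normal subgroup of $\Gamma_{v}^{0}$ -- e.g.\ a deep congruence subgroup -- is normal in the whole product (the other factors act on $\Gamma_{v}^{0}$ through the projection) and is again a compact $p$-adic analytic group with the same simple Lie algebra; in particular your assertion that ``$\Gamma_{v}^{0}$ has no proper closed normal subgroups of full Lie-dimension'' is exactly wrong, and the characterization would have to be reformulated (say, up to open subgroups, or via quotients as in Proposition \ref{prop:maximal}) before $\Phi$ can be said to permute anything.

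Second, and more seriously, even a repaired abstract matching of factors does not yield the conclusion of the proposition, which is geometric: you must show that each $\Phi(i_{v}(\Gamma_{v}^{0}))$ lies inside a single coordinate subgroup $i_{w}(G(k_{w}))$ of $G(\bA_{k,S})$ and that $\widehat{\Lambda^{0}}$ is the coordinate-wise product $\prod_{w}\Lambda_{w}^{0}$ (Definition \ref{def:factor type}). Your appeal to ``$\Lambda^{0}\in\cB_{0}$ plus Prasad/strong approximation'' does not do this: a priori $\pi_{w}\circ\Phi\circ i_{v}$ can have nontrivial \emph{finite} image in $G(k_{w})$ for many $w$, and an internal direct-product decomposition is perfectly compatible with such finite leakage into other coordinates (every profinite group embeds in a product of finite groups), so nothing so far forces the factors into single coordinates. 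Killing these finite images is precisely where the specific definition $\Gamma_{v}^{0}=[\langle\Gamma_{v}^{j}\rangle,\langle\Gamma_{v}^{j}\rangle]$ with $j$ Jordan's constant enters the paper's proof: compact images of virtually pro-$p$ groups in $G(k_{w})$ for $w\nmid p$ are finite, Jordan's theorem then shows the image of $\Gamma_{v}^{0}$ is trivial, and for the places above the same prime a derivative/Lie-algebra argument produces the permutation $\sigma$ and another application of the Jordan trick kills the remaining off-diagonal images. Your argument never uses $j$ or Jordan's theorem at this stage, so the mechanism that upgrades ``abstract matching of blocks'' to ``factor-type isomorphism onto a product-type $\Lambda^{0}$'' is missing; without it, Lemma \ref{lem:Covolume} cannot be invoked.
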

\begin{proof}
(of Proposition \ref{prop:reduction to factor}) We first show that
$D<_{fi}\widehat{\Gamma}$. It is obvious that the closure of $[\langle\Gamma^{j}\rangle,\langle\Gamma^{j}\rangle]$
in $\widehat{\Gamma}$ is contained in $D$. By Margulis' normal subgroup
Theorem \cite[Section 4.4]{Mar91}, $[\langle\Gamma^{j}\rangle,\langle\Gamma^{j}\rangle]<_{fi}\Gamma$
since it is an infinite normal group . Therefore $D<_{fi}\widehat{\Gamma}$.
As $\Gamma$ and $\Lambda$ are elements of $\cB_{0}$, their profinite
completions are naturally compact open subgroups of $G(\bA_{k,S})$.
We introduce some useful notation: For a prime $ $$p$ let
\begin{gather*}
X_{p}:=\{v:v\notin S,v|p\},\, H:=\prod_{v\in X_{p}}\Gamma_{v}\\
\fg:=Lie(H)=Lie_{\bQ_{p}}(\prod_{v\in X_{p}}G(k_{v})),\, H^{0}:=\prod_{v\in X_{p}}\Gamma_{v}^{0}.
\end{gather*}
Given a map $\Phi:A\ra B$ where $A$ and $B$ are two subsets of
$G(\bA_{k,S})$, let $\Phi_{\beta\alpha}:=\pi_{\beta}\circ\Phi\circ i_{\alpha}$
where$ $ $\alpha$ and $\beta$ are can be valuations of $k$ or
rational primes (the maps $i_{\alpha}$ and $\pi_{\beta}$ were defined
in subsection \ref{sub:Adeles}). 

We claim that for any rational prime $p$ and $w\in V^{k}\setminus S$
such that $w\nmid p$ we have that the image of $\Phi_{wp}:H\rightarrow G(k_{w})$
is a finite group. Indeed, as $H$ is a virtually pro-$p$ compact
group and so is its image. On the other hand, its image is also a
compact subgroup of $G(k_{w})$ and therefore it is virtually pro-$q$
where $w|q\neq p$ , hence is must be finite. 

Now we can use Jordan's Theorem to show that $\Phi_{wp}|_{H^{0}}:H^{0}\rightarrow G(k_{w})$
has trivial image: This is because $ $$\Phi_{wp}(H^{0})$ is generated
by 
\[
\{\Phi_{wv}(\overline{[\langle\Gamma_{v}^{j}\rangle,\langle\Gamma_{v}^{j}\rangle]})\}_{v\in X_{p}}
\]
and 
\begin{equation}
\Phi_{wv}(\overline{[\langle\Gamma_{v}^{j}\rangle,\langle\Gamma_{v}^{j}\rangle]})=\overline{[\langle(\Phi_{wv}(\Gamma_{v}))^{j}\rangle,\langle(\Phi_{wv}(\Gamma_{v}))^{j}\rangle]}=\{e\}\label{eq:jordan trivialization}
\end{equation}
$ $where last equality follows from Jordan's Theorem and since $\Phi_{wv}(\Gamma_{v})$
is a finite group by the last claim.

The above shows that $\Phi_{pp}|_{H^{0}}$ is injective and therefore
an isomorphism of the $p$-adic analytic group $H^{0}$ with its image
$\Phi_{pp}(H^{0})$. Thus the latter is a $p$-adic analytic compact
group of dimension $dim(H^{0})=dim(\fg)$. Thus $ $$\Phi_{pp}(H)$
is open in $ $$\prod_{v\in X_{p}}G(k_{v})$. Therefore, the derivative
of $\Phi_{pp}|_{H^{0}}$, which is also the derivative of $\Phi_{pp}|_{H}$,
is an automorphism of $\fg.$ 

Let $\fg=\oplus_{v\in X_{p}}\fg_{v}$ where $\fg_{v}=Lie(G(k_{v})).$
As $\fg$ is semisimple, there exist a permutation $\sigma\in Sym(X_{p})$
such that $d$ is induced by an isomorphism of $\fg_{v}$ with $\fg_{\sigma(v)}$,
$v\in X_{p}$. This imply that for any $w\neq\sigma(v)$ , $\Phi_{wv}:\Gamma_{v}\rightarrow G(k_{w})$
has finite image, and by the same argument as in (\ref{eq:jordan trivialization})
this also imply that $\Phi_{wv}|_{\Gamma_{v}^{0}}:\Gamma_{v}^{0}\rightarrow G(k_{w})$
has trivial image. Therefore, $\Phi_{\sigma(v)v}:\Gamma_{v}^{0}\rightarrow G(k_{\sigma(v)})$
is injective on $\Gamma_{v}^{0}.$

Let $\Delta_{w}:=\Phi_{\sigma(v)v}(\Gamma_{v}^{0})\subset G(k_{w})$
where $v$ and $\sigma$ are the unique valuation and permutation
that satisfy $\sigma(v)=w$ and supplied by the above argument. As
$d$ induce an isomorphism of $\fg_{v}$ with $\fg_{w}$, $\Delta_{w}$
is open in $G(k_{w})$. We now show that $\Lambda^{0}$ defined above
is of product-type by showing that $\widehat{\Lambda^{0}}=\prod_{w\notin S}\Delta_{w}$.
Since $\widehat{\Gamma^{0}}=\prod_{v\notin S}\Gamma_{v}^{0}$, the
group $\Phi(\widehat{\Gamma^{0}})=\widehat{\Lambda^{0}}$ is generated
by $\{\Phi\circ i_{v}(\Gamma_{v}^{0})\}_{v\notin S}.$ The above shows
that $\Phi\circ i_{v}(\Gamma_{v}^{0})$ has trivial $w$-coordinate
whenever $w\neq\sigma(v)$. Therefore, only the elements of $\Delta_{w}$
may appear as the $w$-coordinates of elements of $\widehat{\Lambda^{0}}$,
so $\widehat{\Lambda^{0}}\subseteq\prod_{w\notin S}\Delta_{w}$. Similarly,
any $(\alpha_{w})\in\prod_{w\notin S}\Delta_{w}$ is in $\widehat{\Lambda^{0}}$
since it is the image of $(i_{\sigma^{-1}(w)}^{-1}\circ\Phi^{-1}(\alpha_{w})_{w\notin S})\in\prod_{v\notin S}\Gamma_{v}^{0}=\widehat{\Gamma^{0}}$,
showing $\widehat{\Lambda^{0}}=\prod_{w\notin S}\Delta_{w}$. This
also shows that $\Phi|_{\widehat{\Gamma^{0}}}$ is a factor-type isomorphism
as it is induced from the isomorphisms $\Phi_{\sigma(v)v}|_{\Gamma_{v}^{0}}:\Gamma_{v}^{0}\rightarrow\Delta_{\sigma(v)}$.
\end{proof}

\subsection{\label{sub:proof within}Proof of Theorem \ref{thm: B family}}

In order to use Proposition \ref{prop:reduction to factor} we need
to find finite-index subgroups of general elements of $\cB$ that
will satisfy the assumptions of Proposition \ref{prop:reduction to factor},
so we begin by the following lemma:
\begin{lem}
\label{lem: finite index subgroups} Let $\Gamma\in\cB$. 
\begin{enumerate}
\item There exist $\Gamma^{0}<_{fi}\Gamma$ such that $\Gamma^{0}\subset G(k)$.
\label{enu:rational}
\item There exist $\Gamma^{0}<_{fi}\Gamma$ having the trivial group as
its congruence kernel which implies that $\ov{\Gamma^{0}}=\widehat{\Gamma^{0}}$,
i.e., the congruence completion of $\Gamma^{0}$ is equal to the profinite
completion of $\Gamma^{0}$. \label{enu:congruence}
\item Assume $\Gamma\in\cB_{0}$. There exist $\Gamma^{0}<_{fi}\Gamma$
which is of product-type. \label{enu:product type}\end{enumerate}
\begin{proof}
$ $$\quad$

As $\Gamma$ is commensurable to $G(\cO_{k,S})$, one can take $\Gamma^{0}=\Gamma\cap G(\cO_{k,S})$
for (\ref{enu:rational}). For (\ref{enu:congruence}), Note that
we are assuming that the congruence kernel of $G(\cO_{k,S})$ which
is denoted by $C(S,G)$ is finite. The congruence kernel of $\Gamma$,
$C(S,\Gamma)$ is contained in $C(S,G)$ hence it is also finite and
therefore discrete in the profinite topology on $\widehat{\Gamma}$.
Therefore, there exist a finite-index subgroup $H<\widehat{\Gamma}$
such that $C(S,\Gamma)\cap H=\{e\}$. Lemma \ref{lem:subgroups of profinite completion}
implies that $H$ is of the form $\widehat{\Gamma^{0}}$ with $\Gamma^{0}<_{fi}\Gamma$.
One can show that for any $\Delta<_{fi}\Gamma$, $C(S,\Delta)=C(S,\Gamma)\cap\widehat{\Delta}$,
thus the congruence kernel of $\Gamma^{0}$ is 
\[
C(S,\Gamma)\cap\widehat{\Gamma^{0}}=C(S,\Gamma)\cap H=\{e\}
\]
so we have $\ov{\Gamma^{0}}=\widehat{\Gamma^{0}}$.

Finally, for (\ref{enu:product type}), recall that the assumption
that $\Gamma\in B^{0}$ implies that $\widehat{\Gamma}=\overline{\Gamma}$
is an open compact subgroup of $G(\bA_{k,S})$. From the definition
of the topology on $G(\bA_{k,S})$, there exist a finite-index subgroup
$H$ of $\ov{\Gamma}\subset G(\bA_{k,S})$ of the form $H=\prod_{v\notin S}P_{v}$.
Again, by Lemma \ref{lem:subgroups of profinite completion}, $H$
is of the form $\widehat{\Gamma^{0}}$ with $\Gamma^{0}<_{fi}\Gamma$,
so $\Gamma^{0}$ is of product-type. 

\end{proof}
\end{lem}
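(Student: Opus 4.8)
The plan is to treat the three clauses separately, in each case producing a concretely described open (equivalently, finite-index) subgroup of $\widehat{\Gamma}$ and invoking Lemma~\ref{lem:subgroups of profinite completion} to realise it as $\widehat{\Gamma^{0}}$ for a finite-index $\Gamma^{0}<_{fi}\Gamma$; the only genuine work is bookkeeping of congruence kernels under passage to finite-index subgroups. For clause~(\ref{enu:rational}) I would simply set $\Gamma^{0}:=\Gamma\cap G(\cO_{k,S})$. Since $\Gamma\in\cB=\cB_{G,k,S}$, by the definition of $\cB$ the subgroups $\Gamma$ and $G(\cO_{k,S})$ of $G(\bar k)$ are commensurable in the sense of Definition~\ref{def: commen}, so $\Gamma^{0}<_{fi}\Gamma$; and $G(\cO_{k,S})=G\cap GL_{n}(\cO_{k,S})\subset G(k)$, so $\Gamma^{0}\subset G(k)$.

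For clause~(\ref{enu:congruence}) the first step is to see that $C(S,\Gamma)$ is finite. Using clause~(\ref{enu:rational}) I may assume $\Gamma\subset G(\cO_{k,S})$, after replacing $\Gamma$ by a finite-index subgroup, which does not weaken the statement. Then $\Gamma$ sits with finite index in $G(\cO_{k,S})$, so its profinite topology is the subspace topology, $\widehat{\Gamma}$ and $\ov{\Gamma}$ are identified with the closures of $\Gamma$ in $\widehat{G(\cO_{k,S})}$ and $\ov{G(\cO_{k,S})}$ respectively, and the congruence-completion surjection restricts so that $C(S,\Gamma)=C(S,G)\cap\widehat{\Gamma}\subset C(S,G)$, which is finite because $G(\cO_{k,S})\in\cA_{CSP}$. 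Since a finite subgroup of the profinite group $\widehat{\Gamma}$ is discrete, intersecting finitely many open normal subgroups, one omitting each non-identity element of $C(S,\Gamma)$, yields an open normal $H\vartriangleleft\widehat{\Gamma}$ with $H\cap C(S,\Gamma)=\{e\}$. By Lemma~\ref{lem:subgroups of profinite completion}, $H=\widehat{\Gamma^{0}}$ for some $\Gamma^{0}<_{fi}\Gamma$, and using the identity $C(S,\Delta)=C(S,\Gamma)\cap\widehat{\Delta}$ valid for every $\Delta<_{fi}\Gamma$ (it holds because the congruence-completion map $\widehat{\Gamma}\ra\ov{\Gamma}$ restricts on $\widehat{\Delta}$ to the congruence-completion map of $\Delta$) I conclude $C(S,\Gamma^{0})=C(S,\Gamma)\cap H=\{e\}$, i.e. $\widehat{\Gamma^{0}}=\ov{\Gamma^{0}}$.

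For clause~(\ref{enu:product type}), recall that $\Gamma\in\cB_{0}$ means $\widehat{\Gamma}=\ov{\Gamma}$ is an open compact subgroup of $G(\bA_{k,S})$. By the definition of the restricted-product topology, this open subgroup contains a basic neighbourhood of the identity of the form $\prod_{v\in T}W_{v}\times\prod_{v\notin S\cup T}G(\cO_{v})$ with $T\subset V^{k}\setminus S$ finite and each $W_{v}$ an open neighbourhood of $e$ in $G(k_{v})$; shrinking each $W_{v}$ to an open subgroup $P_{v}\le G(k_{v})$ yields a subgroup $H:=\prod_{v\in T}P_{v}\times\prod_{v\notin S\cup T}G(\cO_{v})\subseteq\widehat{\Gamma}$, which is open and therefore of finite index in the compact group $\widehat{\Gamma}$. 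Lemma~\ref{lem:subgroups of profinite completion} again gives $H=\widehat{\Gamma^{0}}$ for some $\Gamma^{0}<_{fi}\Gamma$; then $\Gamma^{0}\subset\Gamma\subset G(k)$ is rational, $C(S,\Gamma^{0})=C(S,\Gamma)\cap H=\{e\}$ so $\Gamma^{0}$ has the congruence subgroup property with $\widehat{\Gamma^{0}}=\ov{\Gamma^{0}}$, and $\widehat{\Gamma^{0}}=\prod_{v\notin S}P_{v}$ (with $P_{v}=G(\cO_{v})$ for $v\notin S\cup T$) has product form — so $\Gamma^{0}$ is of product-type.

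The part I expect to demand the most care is the congruence-kernel bookkeeping underlying clause~(\ref{enu:congruence}): justifying the identifications of $\widehat{\Delta}$ and $\ov{\Delta}$ with the relevant closures for $\Delta<_{fi}\Gamma$, deducing $C(S,\Delta)=C(S,\Gamma)\cap\widehat{\Delta}$, and using it both to transfer finiteness from $C(S,G)$ to $C(S,\Gamma)$ and to trivialise the congruence kernel of the chosen $\Gamma^{0}$. Everything else is a direct application of Lemma~\ref{lem:subgroups of profinite completion} together with the explicit description of the restricted-product topology on $G(\bA_{k,S})$.
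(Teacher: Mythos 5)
Your proposal is correct and follows essentially the same route as the paper: clause (\ref{enu:rational}) via $\Gamma\cap G(\cO_{k,S})$, clause (\ref{enu:congruence}) via finiteness (hence discreteness) of $C(S,\Gamma)$ inside $\widehat{\Gamma}$, an open subgroup $H$ missing it, Lemma \ref{lem:subgroups of profinite completion}, and the identity $C(S,\Delta)=C(S,\Gamma)\cap\widehat{\Delta}$, and clause (\ref{enu:product type}) via a product-form open subgroup of $\widehat{\Gamma}\subset G(\bA_{k,S})$. The only difference is that you spell out details the paper leaves implicit (reducing to $\Gamma\le_{fi}G(\cO_{k,S})$ to justify $C(S,\Gamma)\subset C(S,G)$, and shrinking the basic restricted-product neighbourhood to a product of open subgroups), which is harmless elaboration rather than a different argument.
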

\begin{proof}
(of theorem \ref{thm: B family}) Let $\Lambda$ be an arbitrary element
of $\cB_{\Gamma}$ and $\Phi:\widehat{\Gamma}\ra\widehat{\Lambda}$
be an isomorphism. Using Lemma \ref{lem: finite index subgroups}
we first find $\Lambda_{1}<_{fi}\Lambda$ such that $\Lambda_{1}\in\cB_{0}$.
Let $\Gamma_{1}<_{fi}\Gamma$ such that $\Phi_{*}(\Gamma_{1})=\Lambda_{1}$.
Then, again using Lemma \ref{lem: finite index subgroups}, we can
find $\Gamma_{2}<_{fi}\Gamma_{1}$ with the following properties:
$\Gamma_{2}\in\cB_{0}$ and it is of product-type. Let $\Lambda_{2}:=\Phi_{*}(\Gamma_{2})$
and note that $[\Gamma:\Gamma_{2}]=[\Lambda:\Lambda_{2}]$ and that
$\Lambda_{2}$ also belongs to $\cB_{0}$. 

Now, the groups $\Gamma_{2}$ and $\Lambda_{2}$ together with the
isomorphism $\Phi|_{\widehat{\Gamma_{2}}}:\widehat{\Gamma_{2}}\ra\widehat{\Lambda_{2}}$
satisfy the the conditions of Proposition \ref{prop:reduction to factor}.
Therefore, we find there exist $\Gamma_{3}<_{fi}\Gamma_{2}$ such
that $\Gamma_{3}$ and $\Lambda_{3}:=\Phi_{*}(\Gamma_{3})$ have a
factor-type isomorphism between them and thus the same covolume in
$G_{S}$ (Lemma \ref{lem:Covolume}). Moreover, 
\[
[\Gamma:\Gamma_{3}]=[\Lambda:\Phi_{*}(\Gamma_{3})]=[\Lambda:\Lambda_{3}],
\]
 so $\Gamma$ and $\Lambda$ also have the same covolume in $G_{S}$.
As $\Lambda$ was arbitrary, it follows that \emph{all} the elements
in $\cB_{\Gamma}$ has the same covolume. A theorem by Borel which
is a $S$-arithmetic extension a well-known theorem of Wang \cite{Borel87}
asserts that there are finitely many isomorphism classes of lattices
of bounded covolume in $G_{S}$, which is of $S$-rank $\geq2$. The
elements of $\cB$ are such lattices so this conclude the proof of
theorem \ref{thm: B family}.
\end{proof}

\section{Proof of the main Theorem \ref{thm:CSP}\label{sec:deduce main thm}}

By Theorem \ref{Main Theorem 1} there exist finitely many commensurability
classes, $\cC^{1},\dots,\cC^{r}$ and arithmetic groups $\Gamma_{1},\dots,\Gamma_{r}$
with $\Gamma_{i}\in\cC^{i}$ such that 
\[
\cA_{\Gamma}=\cup_{i=1}^{r}\cC_{\Gamma_{i}}^{i},
\]
where $\cC_{\Delta}^{i}:=\{\Lambda\in\cC^{i}:\widehat{\Lambda}\cong\widehat{\Delta}\}$.

Given an element $\Delta\in\cA$ a Theorem of Lubotzky \cite{Lub95}
characterize the property {}``$\Delta$ has the Congruence Subgroup
Property'' in terms of certain group-theoretic properties of $\widehat{\Delta}$.
Since $\Gamma$ has CSP and $\widehat{\Gamma}\cong\widehat{\Gamma_{i}}$
it follows that $\Gamma_{i}$ has CSP for all $i=1,\dots,r$. Finally,
for each $i$ there exist $(G_{i},k_{i},S_{i})$ such that any element
of $\cC^{i}$ has an isomorphic copy in $\cB_{G_{i},k_{i},S_{i}}$.
Therefore Theorem \ref{thm: B family} applied for $\cB_{G_{i},k_{i},S_{i}}$
implies that $\cC_{\Gamma_{i}}^{i}$ is a finite union of isomorphism
classes. As $i$ was arbitrary, it follows that $\cA_{\Gamma}$ is
also a finite union of isomorphism classes.

\bibliographystyle{amsalpha}
\bibliography{biblio}

\end{document}